 \documentclass[10pt,reqno]{amsart}
 \usepackage[a4paper,twoside]{geometry}


\usepackage{amsthm}
 \usepackage{amsmath}
 \usepackage{amsfonts}
 \usepackage{amssymb}
 \usepackage{amscd}
 \usepackage{url}
 \usepackage[T1]{fontenc}
 \usepackage[utf8x]{inputenc}
 \usepackage[english]{babel}
 \usepackage[mathscr]{eucal}
 \usepackage{lmodern}
 \usepackage{bera}
 \usepackage{float} 
 \usepackage{bold-extra}
 \usepackage{textcomp}
 \usepackage{graphicx}
 \usepackage{caption}
\usepackage{subcaption} 
 \usepackage[dvipsnames,table]{xcolor}
 \usepackage{mathtools} 
 \usepackage{enumitem}
 \usepackage{tikz} 
 \usepackage{tikz-cd}
 \usepackage{wasysym}
 \usepackage{todonotes}
 \usepackage{fancybox}
 \usepackage{fancyhdr}
 \usepackage[nodisplayskipstretch]{setspace}
 \usepackage[linkcolor=BrickRed,citecolor=darkblue,colorlinks=true]{hyperref}


 \theoremstyle{definition}  
  \newtheorem{defn}{Definition}[section]
  \newtheorem{eg}[defn]{Example}
  
  \newtheorem{obsr}[defn]{Observation}
  
  \theoremstyle{plain}  
  \newtheorem{thm}[defn]{Theorem}
  \newtheorem{lem}[defn]{Lemma}
  \newtheorem{prop}[defn]{Proposition}
  \newtheorem{cor}[defn]{Corollary}

  \theoremstyle{remark}  
  \newtheorem{note}[defn]{Note}
  \newtheorem{rmk}[defn]{Remark}



 \renewcommand{\sf}[1]{\textsf{#1}}

 \newcommand{\mbb}[1]{\mathbb{#1}}
 \newcommand{\mcl}[1]{\mathcal{#1}}

 \newcommand{\msc}[1]{\mathscr{#1}}

 \newcommand{\ol}[1]{\overline{#1}}
 \newcommand{\ul}[1]{\underline{#1}}
 \newcommand{\wtilde}[1]{\widetilde{#1}}

 \newcommand{\abs}[1]{\left\lvert#1\right\rvert}

 \newcommand{\norm}[1]{\left\lVert#1\right\rVert}
 \newcommand{\bnorm}[1]{\bigl\lVert#1\bigr\rVert}

 \newcommand{\M}[1]{\mbb{M}_{#1}}

 \newcommand{\B}[1]{\msc{B}({#1})}

 \newcommand{\ranko}[2]{|{#1}\rangle\langle{#2}|}

 \newcommand{\ip}[1]{\langle#1\rangle}
 \newcommand{\bip}[1]{\bigl\langle#1\bigr\rangle}
 \newcommand{\Bip}[1]{\Bigl\langle#1\Bigr\rangle}

 \newcommand{\ran}[1]{\sf{range}(#1)}
 
 \renewcommand{\ker}[1]{\sf{ker}(#1)}

 \newcommand{\mscriptsize}[1]{{\setlength{\arraycolsep}{.3ex}\text{\scriptsize$#1$}}}

 \newcommand{\Matrix}[1]{\begin{bmatrix}#1\end{bmatrix}}
 
 \newcommand{\sMatrix}[1]{\mscriptsize{\Matrix{#1}}}

 \DeclareMathOperator{\T}{\sf{T}}
 \DeclareMathOperator{\tr}{\sf{tr}}
 
 \DeclareMathOperator{\id}{\sf{id}}

 \DeclareMathOperator{\lspan}{\sf{span}}


 \numberwithin{equation}{section}
 \allowdisplaybreaks[4] 
 \setlist[enumerate]{font=\upshape,noitemsep, topsep=0pt} 
 \setlist[itemize]{noitemsep, topsep=0pt}
  
 \setuptodonotes{size=\tiny, color={red!70!green!20}}
 \definecolor{darkblue}{rgb}{0.0,0.0,0.3}

\makeatletter
\@namedef{subjclassname@2020}{\textup{2020} Mathematics Subject Classification}
\makeatother

\linespread{1.25}

\begin{document}

\title[$k$-Entanglement breaking maps]{Mapping Cone of $k$-Entanglement Breaking Maps}

\author{Repana Devendra}
\address{Department of Mathematics, Indian Institute of Technology Madras, Chennai, Tamilnadu 600036, India}
\email{r.deva1992@gmail.com}

\author{Nirupama Mallick}
\address{Chennai Mathematical Institute, H1, SIPCOT IT Park, Siruseri, Kelambakkam 603103, India}
\email{niru.mallick@gmail.com}

\author{K. Sumesh}
\address{Department of Mathematics, Indian Institute of Technology Madras, Chennai, Tamilnadu 600036, India}
\email{sumeshkpl@gmail.com, sumeshkpl@iitm.ac.in}

\date{\today}

\begin{abstract}
 In \cite{CMW19}, the authors introduced $k$-entanglement breaking linear maps to understand the entanglement breaking property of completely positive maps on taking composition. In this article, we do a systematic study of  $k$-entanglement breaking maps. We prove many equivalent conditions for a $k$-positive linear map to be $k$-entanglement breaking, thereby study the mapping cone structure of $k$-entanglement breaking maps.  We discuss examples of $k$-entanglement breaking maps and some of their significance. As an application of our study, we characterize the completely positive maps that reduce Schmidt number on taking composition with another completely positive map. 
\end{abstract}

\keywords{mapping cones, dual cone, tensor products, positive maps, completely positive maps, entanglement breaking, Schmidt number}

\subjclass[2020]{Primary:   47L07. Secondary:  81P40, 81P42} 

\maketitle

\tableofcontents

\section{Introduction}

 Completely positive maps that are PPT or entanglement breaking are of great importance in Quantum Information theory. The problem of deciding whether a given linear map is entanglement breaking or not is computationally a hard one. Entanglement breaking maps are known to be PPT-maps, but the converse is not true in general. It is a well-known conjecture (\cite{RJKHW12}) that the square of a PPT channel is entanglement breaking. To understand the entanglement property of maps on taking composition, in \cite{CMW19}, the notion of $k$-entanglement breaking maps is introduced, where $k\in\mbb{N}$.  In this article, we study $k$-entanglement breaking maps in more detail. We generalize several known results about entanglement breaking maps into the setup of $k$-entanglement breaking maps.   We hope that our study will help understanding entanglement breaking maps better and hence be of substantial interest, especially in quantum physics.
 
  We organize this article into seven sections. In Section \ref{sec-Notn-Prel}, we recall some notations, basic definitions and known-results useful for later sections. In Section \ref{sec-nEB}, we establish various characterizations of $k$-entanglement breaking maps.   Making use of these,  we prove (Theorem \ref{thm-mapp-cone}) that  the set $\mcl{EB}_k$ and $\mcl{EBCP}_k$ of all $k$-entanglement breaking linear maps and  $k$-entanglement breaking completely positive maps, respectively, form mapping cones.  Further, we show that (Theorem \ref{thm-nEB-untypical}) both the cones serve as examples of non-symmetric,  untypical mapping cones. 

Section \ref{sec-eg} discuss examples of $k$-entanglement breaking maps and  some of their significance in determining separability (Theorem \ref{thm-sep-necc}) and entanglement (Corollary \ref{cor-EB-necc}). Though there is no close relation between PPT-maps and $k$-entanglement breaking linear maps (Proposition \ref{prop-PPT-non2EB} and Example \ref{eq-nEB-nonCP}), Theorem \ref{thm-nEB-suff-condn} provides a sufficient condition for a trace preserving positive map (in particular, PPT-map) to becomes a $k$-entanglement breaking map.  

 As an application, in Section \ref{sec-SN-red-CP}, we exhibit the Schmidt number reducing property of $2$-entanglement breaking completely positive maps. Motivated from work done in \cite{RJP18}, we ask the following question, which is closely related to PPT-square conjecture: Suppose $\Phi$ is a non-entanglement breaking completely positive map on $\M{d}$. Under what sufficient conditions on $\Phi$ does there exist $N\in\mbb{N}$ such that $\Phi^N$ is entanglement breaking? In such cases, $1=SN(\Phi^N)<SN(\Phi)$. This inequality motivates us to search for those completely positive maps, which reduce the Schmidt number after composing a finite number of times.  In Theorem \ref{thm-2EB-char}, we show that precisely those completely positive maps that are $2$-entanglement breaking reduce the Schmidt number on composing with another completely positive map. Further, if $\Phi$ is a $k$-entanglement breaking completely positive map, Corollary \ref{cor-nEB+2EB-2} provides an upper bound for $N$.  
 
  In Section \ref{sec-Maj}, we discuss a majorization result for $k$-entanglement breaking maps. Section \ref{sec-disc} discuss one open problem. In Appendix we prove the separability of a particular class of positive matrices, which we use in Section  \ref{sec-eg}. As the proof involves few technical lemmas,  we write it as a separate section.

\section{Notation and preliminaries}\label{sec-Notn-Prel}
 Throughout this article, we fix $d, d_1, d_2, d_3\in\mbb{N}$. Unless mentioned otherwise, $\{e_i\}_{i=1}^d\subseteq\mbb{C}^d$ always denotes the standard orthonormal basis. We let $\M{d_1\times d_2}$ denote the space of all $d_1\times d_2$ complex matrices and $I=I_d\in\M{d}=\M{d\times d}$ be the diagonal matrix with diagonals equals $1$.  By writing $A=[a_{ij}]\in\M{d_1\times d_2}$, we mean $A$ is a $d_1\times d_2$ complex matrix with $(i, j)^{th}$ entry equals $a_{ij}$ for all $1\leq i\leq d_1, 1\leq j\leq d_2$.   Further, we let  $\T=\T_d: \M{d}\to \M{d}$ denote the transpose map, $\tr: \M{d}\to \mbb{C}$ the trace map, and $\id=\id_d:\M{d}\to\M{d}$ the identity map. The cone of all positive (semidefinite) matrices in $\M{d}$ is denoted by $\M{d}^+$. If $A\in\M{d}^+$, then we write $A\geq 0$. Given $x\in\mbb{C}^{d_1}, y\in\mbb{C}^{d_2}$, define the mapping $\ranko{x}{y}:\mbb{C}^{d_2}\to\mbb{C}^{d_1}$ by $\ranko{x}{y}(z):=x\ip{y,z}$ for all $z\in\mbb{C}^{d_2}$. Note that $\ranko{x}{x}\geq 0$.
 
 We let $\Omega_d=\sum_{i=1}^d e_i\otimes e_i\in\mbb{C}^d\otimes\mbb{C}^d$. Given a unit vector  $\xi\in\mbb{C}^{d_1}\otimes\mbb{C}^{d_2}$ there always exist orthonormal sets $\{u_i\}_{i=1}^d\subseteq\mbb{C}^{d_1}$ and $\{v_i\}_{i=1}^d\subseteq\mbb{C}^{d_2}$, and scalars $\{\lambda_i\}_{i=1}^d\subseteq[0,1]$ with $\sum_i\lambda_i^2=1$ such that 
  \begin{align}\label{equ-schmdt-rnk}
     \xi=\sum_{i=1}^d\lambda_i (u_i\otimes v_i),
  \end{align}        
 where $d=\min\{d_1,d_2\}$. A decomposition of the form \eqref{equ-schmdt-rnk} is called a  \emph{Schmidt decomposition} (\cite{Sch06}) of the vector $\xi$.  The number of non-zero coefficients in any two Schmidt decomposition of $\xi$ is the same, and this unique number is called the \emph{Schmidt rank} of $\xi$ and is denoted by $SR(\xi)$. 
 
\begin{defn} 
  Let $X\in(\M{d_1}\otimes\M{d_2})^+$.  If there exist $A_i\in\M{d_1}^+$ and $B_i\in\M{d_2}^+, 1\leq i\leq n,$ such that $X=\sum_{i=1}^n A_i\otimes B_i$, 
  then $X$ is said to be \emph{separable}, otherwise called \emph{entangled}. 
\end{defn}
 Though sufficient and necessary conditions for separability are known (\cite{HHH96}), determining whether a given positive matrix is separable or not is very hard. Often one uses the Schmidt number techniques to handle this situation. The \emph{Schmidt number} (\cite{TeHo00}) of a positive matrix $X\in(\M{d_1}\otimes\M{d_2})^+$ is defined as  
           $$SN(X):=\min\Big\{\max\Big\{SR(\xi_i): X=\sum_{i=1}^ n\ranko{\xi_i}{\xi_i}\Big\}: \xi_i\in\mbb{C}^{d_1}\otimes\mbb{C}^{d_2}, n\geq 1\Big\}.$$
  Clearly $SN(\ranko{\xi}{\xi})=SR(\xi)$ for all $\xi\in\mbb{C}^{d_1}\otimes\mbb{C}^{d_2}$, and $SN(\sum_{i=1}^n X_i)\leq \max_i\{SN(X_i)\}$, where $X_i\in(\M{d_1}\otimes\M{d_2})^+,1\leq i\leq n$. Note that $X$ is separable if and only if $SN(X)=1$. From the definition of Schmidt number, $SN(X)\leq\min\{d_1,d_2\}$. 
   
A linear  map $\Phi:\M{d_1}\to\M{d_2}$ is said to be \emph{positive} if $\Phi(\M{d_1}^+)\subseteq \M{d_2}^+$. Given $k\geq 1$, $\Phi$ is said to be \emph{$k$-positive} if   $\id_k\otimes\Phi:\M{k}\otimes\M{d_1}\to\M{k}\otimes\M{d_2}$ is a positive map.  If both $\Phi$ and $\T\circ\Phi$ (equivalently $\Phi\circ\T$) are $k$-positive, then $\Phi$ is called a \emph{$k$-PPT map}.
 If $\Phi$ is $k$-positive for every $k\geq 1$,  then $\Phi$ is called a \emph{completely positive} (CP-)map. Every CP-map $\Phi:\M{d_1}\to\M{d_2}$ can be written as 
 \begin{align}\label{eq-Kraus-decomp}
     \Phi=\sum_{i=1}^n Ad_{V_i}
 \end{align}
 for some $V_1, V_2, \cdots, V_n\in\M{d_1\times d_2}$, where $Ad_{V}(X):=V^*XV$ for all $X\in\M{d_1}$ and $V\in\M{d_1\times d_2}$. A decomposition of $\Phi$ given in \eqref{eq-Kraus-decomp}  is called a \emph{Choi-Kraus decomposition} (\cite{Kra71,Cho75}) of $\Phi$, and $V_i$'s are called Choi-Kraus operators.  A linear map $\Phi:\M{d_1}\to\M{d_2}$ is said to be \emph{completely co-positive} (co-CP) if $\T\circ\Phi$ (equivalently $\Phi\circ\T$) is a CP-map. A linear map $\Phi$ that is both CP and co-CP  is called a \emph{PPT-map}. 
 
  Given $A=[a_{ij}]\in\M{d_1}$ and $B=[b_{ij}]\in\M{d_2}$, we let $A\otimes B=[a_{ij}B]$ and thereby identify $\M{d_1}\otimes\M{d_2}=\M{d_1}(\M{d_2})$. To every linear map $\Phi:\M{d_1}\to\M{d_2}$ associate the matrix  
 \begin{align*}
      C_\Phi:=(\id_{d_1}\otimes\Phi)(\sum_{i,j=1}^{d_1}E_{ij}\otimes E_{ij})
                  =[\Phi(E_{ij})]\in\M{d_1}\otimes\M{d_2}=\M{d_1}(\M{d_2}),
 \end{align*} 
 where $E_{ij}=\ranko{e_i}{e_j}\in\M{d_1},~1\leq i,j\leq d_1$, are the standard matrix units. The map $\Phi\mapsto C_\Phi$ is an isomorphism (called \emph{Choi-Jamiolkowski isomorphism} \cite{Cho75, Jam72}) from the space $\{\Phi:\M{d_1}\to\M{d_2}\mbox { linear map}\}$ onto the space $\M{d_1}\otimes\M{d_2}=\M{d_1}(\M{d_2})$. 
 Further, $\Phi$ is a CP-map if and only if $\Phi$ is $d_1$-positive if and only if $C_\Phi\in(\M{d_1}\otimes\M{d_2})^+$; and $\Phi$ is PPT if and only if $C_\Phi$ is a positive matrix with positive partial transpose (i.e., $(\id_{d_1}\otimes\T)(C_\Phi)\geq 0$).
  
 Suppose $\Phi:\M{d_1}\to\M{d_2}$ is a CP-map. Then the \emph{Schmidt number} of $\Phi$ is denoted and defined (\cite{ChKo06,Hua06}) as $SN(\Phi):=SN(C_\Phi)$. It is known (\cite{ChKo06}) that
  \begin{align*}
      SN(\Phi)=\min\Bigg\{\max\Big\{rank(V_i): \Phi=\sum_{i=1}^n Ad_{V_i}\Big\}: V_i\in\M{d_1\times d_2}, n\geq 1\Bigg\}.
  \end{align*}
 Given a linear map $\Phi:\M{d_1}\to\M{d_2}$, we let $\Phi^*$ denotes the dual of $\Phi$ w.r.t the Hilbert Schmidt inner product $\ip{X,Y}:=\tr(X^*Y)$. From Choi-Kraus decomposition, it follows that
    \begin{itemize}
     \item $SN(\Phi)=SN(\Phi^*)$,
     \item $SN(\Phi\circ\Psi)\leq\min\big\{SN(\Phi),SN(\Psi)\big\}$, 
     \item $SN(\Phi+\Psi)\leq\max\big\{SN(\Phi),SN(\Psi)\big\}$, 
   \end{itemize}
  where $\Phi,\Psi$ are CP-maps. 
  
\begin{defn}
 A linear map $\Phi:\M{d_1}\to\M{d_2}$ is said to be an \emph{entanglement breaking (EB)} if $(\id_k\otimes \Phi)(X)$ is separable for all $X\in (\M{k}\otimes\M{d_1})^+$ and $k\geq 1$. 
\end{defn}

 Note that an EB-map is necessarily a CP-map; in fact, they are PPT-maps. Various characterizations of EB-maps are known in the literature.

\begin{thm}[\cite{Hol99, HSR03}]\label{thm-EBmap-char}
 Let $\Phi: \M{d_1}\to \M{d_2}$ be a CP-map. Then the following conditions are equivalent: 
           \begin{enumerate}[label=(\roman*)]
                  \item $\Phi$ is entanglement breaking.
                  \item $\Gamma\circ\Phi$ is CP for all positive maps $\Gamma:\M{d_2}\to\M{n}, n\geq 1$.
                  \item $\Gamma\circ\Phi$ is CP for all positive maps $\Gamma:\M{d_2}\to\M{d_1}$.
                  \item $\Phi\circ\Gamma$ is CP for all positive maps $\Gamma:\M{n}\to\M{d_1}, n\geq 1$.
                  \item $\Phi\circ\Gamma$ is CP for all positive maps $\Gamma:\M{d_2}\to\M{d_1}$.
                  \item $C_\Phi\in(\M{d_1}\otimes\M{d_2})^+$ is separable.
                  \item $\Phi$ admits a set of rank one Choi-Kraus operators. 
                  \item (Holevo form:) There exist $F_i\in\M{d_1}^+$ and $R_i\in\M{d_2}^+$ such that $\Phi(X)=\sum_{i=1}^m\tr(XF_i)R_i$ for all $X\in\M{d_1}$.
           \end{enumerate} 
\end{thm}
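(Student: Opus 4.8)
The plan is to establish the equivalence of the eight conditions by proving a cycle of implications, supplemented by a few direct equivalences. First I would set up the backbone cycle
\begin{center}
$(i)\Rightarrow(ii)\Rightarrow(iii)\Rightarrow(v)\Rightarrow(i)$,
\end{center}
together with $(ii)\Rightarrow(iv)\Rightarrow(v)$ on the ``$\Phi\circ\Gamma$'' side, and then handle $(vi),(vii),(viii)$ as a separate block tied into $(i)$. The implications that merely restrict the class of test maps, namely $(ii)\Rightarrow(iii)$ and $(iv)\Rightarrow(v)$, are immediate since the larger family of positive maps $\Gamma:\M{d_2}\to\M{n}$ (resp. $\Gamma:\M{n}\to\M{d_1}$) contains the case $n=d_1$ (resp. $n=d_2$). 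So the genuine content lies in $(i)\Leftrightarrow(ii)$ (and its one-sided analogue $(i)\Leftrightarrow(iv)$) and in $(i)\Leftrightarrow(vi)\Leftrightarrow(vii)\Leftrightarrow(viii)$.

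For the composition characterizations, the key tool is the Choi--Jamio\l kowski correspondence combined with the duality between positivity of a bilinear form and complete positivity. To prove $(v)\Rightarrow(i)$, I would argue contrapositively: if $(\id_k\otimes\Phi)(X)$ fails to be separable for some $X\in(\M{k}\otimes\M{d_1})^+$, then by the Horodecki separability criterion there is a positive map $\Gamma:\M{d_2}\to\M{d_1}$ (the dimension matching is what makes $(iii)$ and $(v)$ the ``tight'' versions) whose action detects the entanglement, i.e. $(\id\otimes\Gamma)(\id_k\otimes\Phi)(X)\not\geq 0$, which contradicts complete positivity of $\Gamma\circ\Phi$ or $\Phi\circ\Gamma$. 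Conversely, for $(i)\Rightarrow(ii)$, if $\Phi$ is entanglement breaking then $C_\Phi$ is separable, so $\Phi$ has the Holevo form $\Phi(X)=\sum_i\tr(XF_i)R_i$; composing with any positive $\Gamma$ gives $\Gamma\circ\Phi(X)=\sum_i\tr(XF_i)\Gamma(R_i)$, and since each $\Gamma(R_i)\geq 0$ and $X\mapsto\tr(XF_i)$ is a positive functional, the result is manifestly completely positive (indeed, each summand is entanglement breaking). This already shows $(viii)\Rightarrow(ii)$ as well.

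The block $(vi)\Leftrightarrow(vii)\Leftrightarrow(viii)\Leftrightarrow(i)$ is where the structural heavy lifting sits, but most of it is essentially a translation. I would prove $(i)\Leftrightarrow(vi)$ first: applying the definition with $k=d_1$ to the maximally entangled state shows $C_\Phi$ separable is necessary, and conversely separability of $C_\Phi$ propagates to separability of all $(\id_k\otimes\Phi)(X)$ by writing $X$ as a sum of rank-one positive matrices and using that $\id_k\otimes\Phi$ factors through $C_\Phi$. The equivalence $(vi)\Leftrightarrow(vii)$ follows by unpacking a separable decomposition $C_\Phi=\sum_i\ranko{u_i\otimes w_i}{u_i\otimes w_i}$ and matching it termwise to rank-one Choi--Kraus operators $V_i=\ranko{\overline{u_i}}{w_i}$ (the conjugate appearing through the Choi--Jamio\l kowski convention), and $(vii)\Leftrightarrow(viii)$ is the passage between rank-one Kraus operators $V_i=\ranko{x_i}{y_i}$ and the Holevo data $F_i=\ranko{\overline{x_i}}{\overline{x_i}}$, $R_i=\ranko{y_i}{y_i}$ via a direct computation of $Ad_{V_i}(X)$.

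The main obstacle I anticipate is $(v)\Rightarrow(i)$: this is precisely the direction encoding the Horodecki criterion, and it requires invoking (or reproving) the fact that entanglement is always witnessed by \emph{some} positive map with the right domain and codomain dimensions $\M{d_2}\to\M{d_1}$. Getting the dimensions to match tightly — so that the weak hypotheses $(iii)$ and $(v)$ (only $n=d_1$ or $n=d_2$ test maps) already suffice — is the subtle point, and it is exactly what distinguishes these conditions from the a priori weaker-looking $(ii)$ and $(iv)$. Everything else reduces to bookkeeping with the Choi--Jamio\l kowski isomorphism and the explicit Holevo/Kraus dictionary.
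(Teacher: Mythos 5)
The paper does not actually prove this theorem; it is imported verbatim from \cite{Hol99, HSR03}, so your proposal can only be judged against the standard arguments, and on that score it has two genuine gaps, both on the composition side. The dictionary $(i)\Leftrightarrow(vi)\Leftrightarrow(vii)\Leftrightarrow(viii)$, the trivial restrictions $(ii)\Rightarrow(iii)$ and $(iv)\Rightarrow(v)$, and $(viii)\Rightarrow(ii)$ via the Holevo form are all fine as sketched. The first gap is your use of the Horodecki criterion in $(v)\Rightarrow(i)$ (and implicitly $(iii)\Rightarrow(i)$): for an entangled state in $(\M{k}\otimes\M{d_2})^+$, the criterion produces a positive witness $\Gamma:\M{d_2}\to\M{k}$ --- its codomain is the dimension of the \emph{first} tensor factor, namely $k$, not $d_1$. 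So for general $k$ your contrapositive only contradicts hypothesis (ii), which is exactly the stronger hypothesis you were trying to avoid; the claimed witness into $\M{d_1}$ simply is not what the criterion gives. The standard repair is to run the contrapositive on the Choi matrix alone: take $k=d_1$ and $X=\ranko{\Omega_{d_1}}{\Omega_{d_1}}$, so the witness does land in $\M{d_1}$ and $(\id_{d_1}\otimes\Gamma)(C_\Phi)=C_{\Gamma\circ\Phi}\not\geq 0$ contradicts (iii); this proves $(iii)\Rightarrow(vi)$, and your separate block $(vi)\Rightarrow(viii)\Rightarrow(i)$ then recovers full entanglement breaking.

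The second gap is that the pre-composition conditions (iv) and (v) are never actually reached: the arrows $(ii)\Rightarrow(iv)$ and $(iii)\Rightarrow(v)$ in your diagram swap $\Gamma\circ\Phi$ for $\Phi\circ\Gamma$, are not of the ``restrict the test maps'' type, and are nowhere justified; moreover no witness computation can contradict complete positivity of $\Phi\circ\Gamma$, because applying $\id_k\otimes\Gamma$ after $\id_k\otimes\Phi$ only ever produces $\id_k\otimes(\Gamma\circ\Phi)$. The missing ingredient is duality: $(\Phi\circ\Gamma)^*=\Gamma^*\circ\Phi^*$ and $\Gamma\mapsto\Gamma^*$ is a bijection of positive maps, so (v) for $\Phi$ is precisely (iii) for $\Phi^*$; combined with the fact that $C_{\Phi^*}$ is obtained from $C_\Phi$ by flipping the two tensor factors and transposing (both of which preserve separability), one gets $(v)\Rightarrow C_{\Phi^*}\ \mbox{separable}\Rightarrow(vi)\Rightarrow(i)$. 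Alternatively, $(viii)\Rightarrow(iv)$ is immediate from $\tr(\Gamma(X)F_i)=\tr(X\,\Gamma^*(F_i))$, which exhibits $\Phi\circ\Gamma$ again in Holevo form. With these two repairs your architecture closes up correctly.
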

 
  One may also consider CP-maps that have Choi-Kraus operators of rank less than or equal to some $k>1$. A  CP-map $\Phi:\M{d_1}\to\M{d_2}$ is said to be \emph{$k$-partially entanglement breaking} (k-PEB) if  it has Choi-Kraus operators of rank less than or equal to $k$ (equivalently $SN(\Phi)\leq k$).  Refer \cite{ChKo06} for details. EB-maps and $k$-PEB maps are also known as \emph{superpositive maps} and \emph{$k$-superpositive maps}, respectively, in the literature (See \cite{And04, SSZ09}).

\begin{defn}[\cite{Sto86,Sko11}]
 Let $\mcl{P}(d_1,d_2)$ denotes the cone of all positive linear maps from $\M{d_1}$ into $\M{d_2}$. A \emph{mapping cone} is a closed convex cone $\msc{C}\subseteq\mcl{P}(d_1,d_2)$ such that $$\Phi\circ\Theta\circ\Psi\in \msc{C}$$ for all $\Theta\in \msc{C}$, and CP-maps $\Psi:\M{d_1}\to\M{d_1}$ and $\Phi:\M{d_2}\to\M{d_2}$. The \emph{dual cone} of a mapping cone $\msc{C}$ is defined by  
 $$\msc{C}^\circ:=\{\Gamma\in\mcl{P}(d_1,d_2): \tr(C_\Gamma C_\Theta)\geq 0 \mbox{ for all }\Theta\in \msc{C}\}.$$
  A mapping cone is said to be \emph{invariant} if $\Gamma_2\circ\Theta\circ\Gamma_1\in \msc{C}$ for all $\Theta\in \msc{C}$ and $\Gamma_i\in\mcl{P}(d_i,d_i), i=1,2$. 
\end{defn} 

  If $\msc{C}$ is a mapping cone, then the dual cone $\msc{C}^\circ$ is also a mapping cone. Further, if $\msc{C}$ is invariant, then so is $\msc{C}^\circ$. The following are some well-known examples of  mapping cones of $\mcl{P}(d_1,d_2)$:  
 \begin{align*}
    \mcl{P}_k(d_1,d_2)&:=\{\mbox{$k$-positive maps from $\M{d_1}$ into $\M{d_2}$}\},\\
    \mcl{CP}(d_1,d_2)&:=\{\mbox{CP-maps from $\M{d_1}$ into $\M{d_2}$}\},\\
    \mcl{PEB}_k(d_1,d_2)&:=\{\mbox{$k$-PEB maps from $\M{d_1}$ into $\M{d_2}$}\},\\
    \mcl{EB}(d_1,d_2)&:=\{\mbox{EB-maps from $\M{d_1}$ into $\M{d_2}$}\},   
 \end{align*}
 where $k>1$. (If $d_1=d_2=d$, then we write $\mcl{P}_k(d),\mcl{CP}(d),\mcl{PEB}_k(d),\mcl{EB}(d)$, respectively.)  It is known that $(\msc{C}^\circ)^\circ=\msc{C}$ for every mapping cone $\msc{C}$, and  
 \begin{align}
      \mcl{P}(d_1,d_2)^\circ&= \mcl{EB}(d_1,d_2)\qquad(\mbox{hence }\mcl{EB}(d_1,d_2)^\circ=\mcl{P}(d_1,d_2)) \label{eq-P-dual}\\
      \mcl{P}_k(d_1,d_2)^\circ&=\mcl{PEB}_k(d_1,d_2)\quad(\mbox{hence }\mcl{PEB}_k(d_1,d_2)^\circ=\mcl{P}_k(d_1,d_2))\label{eq-P-n-dual}\\
      \mcl{CP}(d_1,d_2)^\circ&=\mcl{CP}(d_1,d_2). \label{eq-CP-dual}
 \end{align}
 The following much more generalized result is known. See  \cite{Sto09, SSZ09, Sto11, Sko11} for details.
 
 \begin{thm}\label{thm-dual-cone}
 Let $\msc{C}\subseteq\mcl{P}(d_1,d_2)$ be a mapping cone and $\Gamma\in\mcl{P}(d_1,d_2)$. Then the following conditions are equivalent:
 \begin{enumerate}[label=(\roman*)]
     \item $\Gamma\in \msc{C}^\circ$.
     \item $\Gamma\circ\Theta^*\in \mcl{CP}(d_2)$ for all $\Theta\in \msc{C}$.
     \item $\Theta^*\circ\Gamma\in\mcl{CP}(d_1)$ for all $\Theta\in \msc{C}$.
  \end{enumerate}
 Further, if $d_1=d_2=d$ and $\msc{C}$ is $\ast$-invariant, then the above conditions are equivalent to:  
 \begin{enumerate}
      \item [(iv)] $\Gamma\circ\Theta\in \mcl{CP}(d)$ for all $\Theta\in \msc{C}$.
     \item [(v)] $\Theta\circ\Gamma\in\mcl{CP}(d)$ for all $\Theta\in \msc{C}$.
 \end{enumerate}
\end{thm}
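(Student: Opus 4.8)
The plan is to prove Theorem \ref{thm-dual-cone} by establishing the cycle of implications (i) $\Leftrightarrow$ (ii) $\Leftrightarrow$ (iii), and then, under the hypothesis that $d_1=d_2=d$ and $\msc{C}$ is $\ast$-invariant, the further equivalences (ii) $\Leftrightarrow$ (iv) and (iii) $\Leftrightarrow$ (v). The central computational device throughout will be the Choi--Jamiolkowski isomorphism together with the pairing $\tr(C_\Gamma C_\Theta)$, which I would first rewrite in a more tractable form. Specifically, I would record the standard identity expressing the trace pairing of two Choi matrices in terms of the maps: for linear maps $\Gamma,\Theta:\M{d_1}\to\M{d_2}$ one has a formula relating $\tr(C_\Gamma C_\Theta^{\T})$ (or $\tr(C_\Gamma\, C_{\Theta^*})$) to $\tr\!\big((\Gamma\circ\Theta^*)(\cdot)\big)$ evaluated against the maximally entangled vector $\Omega_d$. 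This reduces the definition of the dual cone, which is phrased via a scalar inequality $\tr(C_\Gamma C_\Theta)\geq 0$, to a positivity statement about a composed map.

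First I would handle (i) $\Leftrightarrow$ (ii). For the forward direction, I would use that a CP-map is characterized by positivity of its Choi matrix, and that $\langle\Omega_d, C_\Psi \,\Omega_d\rangle = \tr\big(\Psi(\cdot)\big)$-type evaluations detect positivity when tested against the cone $\msc{C}$. The key observation is that the condition $\Gamma\circ\Theta^*\in\mcl{CP}(d_2)$ for \emph{all} $\Theta\in\msc{C}$ is, via Choi matrices, equivalent to $\tr\big(C_{\Gamma\circ\Theta^*}\, P\big)\geq 0$ for all rank-one projections $P=\ranko{\eta}{\eta}$, and one translates each such test, using the mapping-cone closure property $\Phi\circ\Theta\circ\Psi\in\msc{C}$, into the single scalar condition defining $\msc{C}^\circ$. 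The closure of $\msc{C}$ under pre- and post-composition with CP-maps is exactly what lets me absorb the ``test'' operators $Ad_V$ coming from $\eta$ into the cone, so that the positivity of $\Gamma\circ\Theta^*$ against all test vectors collapses to positivity of the pairing against all $\Theta\in\msc{C}$. The equivalence (i) $\Leftrightarrow$ (iii) is entirely symmetric, obtained by transposing roles; here I would invoke $SN(\Phi)=SN(\Phi^*)$ and the fact that $\msc{C}$ being a mapping cone makes $\{\Theta^*:\Theta\in\msc{C}\}$ behave compatibly, so the computation mirrors the previous one with $\Theta^*\circ\Gamma$ in place of $\Gamma\circ\Theta^*$.

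For the second block, assume $d_1=d_2=d$ and that $\msc{C}$ is $\ast$-invariant, meaning $\msc{C}=\{\Theta^*:\Theta\in\msc{C}\}$ (equivalently closed under the adjoint operation). Then (iv) is obtained from (ii) simply by replacing $\Theta^*$ with $\Theta$: since $\Theta\in\msc{C}\Leftrightarrow\Theta^*\in\msc{C}$, the family $\{\Gamma\circ\Theta^*:\Theta\in\msc{C}\}$ coincides with $\{\Gamma\circ\Theta:\Theta\in\msc{C}\}$, so conditions (ii) and (iv) are literally the same set of requirements. The same substitution yields (iii) $\Leftrightarrow$ (v). Thus the $\ast$-invariance hypothesis is precisely the ingredient that removes the adjoint from the CP-composition conditions.

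The main obstacle I anticipate is the first equivalence (i) $\Leftrightarrow$ (ii): correctly setting up the trace-pairing identity so that the scalar inequality $\tr(C_\Gamma C_\Theta)\geq 0$ for all $\Theta\in\msc{C}$ is genuinely equivalent to complete positivity of the composite $\Gamma\circ\Theta^*$, rather than mere positivity. The subtlety is that detecting complete positivity requires testing $\Gamma\circ\Theta^*$ against all input positive matrices in $\M{k}\otimes\M{d}$ for every $k$, not just scalar functionals; the resolution is that the mapping-cone axiom $\Phi\circ\Theta\circ\Psi\in\msc{C}$ allows me to realize each such amplified test as a pairing against a suitable member of $\msc{C}$, so that closure of the cone under CP pre/post-composition does the heavy lifting. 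Once the pairing identity is pinned down and the absorption of test operators into $\msc{C}$ is justified, the remaining implications follow by symmetry and by the direct substitution afforded by $\ast$-invariance.
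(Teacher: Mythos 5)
The paper itself contains no proof of this theorem: it is stated in Section~2 as a known result, with the reader referred to \cite{Sto09, SSZ09, Sto11, Sko11}. So your proposal must stand on its own, and it does — it is essentially the standard duality argument from those references, and its mechanism (pairing identity plus absorption of test vectors into the cone) is sound. Two things need tightening, though. First, the pairing identity you leave hedged should be pinned down as
\begin{align*}
   \tr(C_\Gamma C_\Theta)
   =\bip{\Omega_{d_1},\,C_{\Theta^*\circ\Gamma}\,\Omega_{d_1}}
   =\bip{\Omega_{d_2},\,C_{\Gamma\circ\Theta^*}\,\Omega_{d_2}},
\end{align*}
which holds because all maps involved are positive, hence Hermiticity-preserving. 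No transpose enters: the paper's $\msc{C}^\circ$ is defined by $\tr(C_\Gamma C_\Theta)\geq 0$ itself, and your alternative guesses $\tr(C_\Gamma C_\Theta^{\T})$ and $\tr(C_\Gamma C_{\Theta^*})$ are different pairings (the latter does not even respect the tensor factorizations when $d_1\neq d_2$), so carrying them forward would derail the computation. Granting the displayed identity, (ii)$\Rightarrow$(i) and (iii)$\Rightarrow$(i) are immediate, and (i)$\Rightarrow$(ii) is exactly your absorption argument: writing an arbitrary $\eta\in\mbb{C}^{d_2}\otimes\mbb{C}^{d_2}$ as $(V\otimes I_{d_2})\Omega_{d_2}$ gives $\bip{\eta,\,C_{\Gamma\circ\Theta^*}\,\eta}=\tr\big(C_\Gamma C_{Ad_{\ol{V}}\circ\Theta}\big)\geq 0$, since $Ad_{\ol{V}}\circ\Theta\in\msc{C}$ by the mapping-cone axiom; symmetrically, (i)$\Rightarrow$(iii) uses $\Theta\circ Ad_{W^*}\in\msc{C}$. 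Second, two blemishes in your narrative: the reduction of complete positivity to positivity of a single matrix is Choi's theorem (CP iff $C_{\Gamma\circ\Theta^*}\geq 0$) — that, not the mapping-cone axiom, is what disposes of your worry about testing at every amplification level $k$; the cone is needed only to absorb the rank-one vector tests, as you then correctly say. And your appeal to $SN(\Phi)=SN(\Phi^*)$ in (i)$\Leftrightarrow$(iii) is spurious — Schmidt numbers play no role there; the argument is just the mirrored pairing computation. Your treatment of (iv) and (v) is correct: once $\msc{C}=\{\Theta^*:\Theta\in\msc{C}\}$, conditions (ii)/(iv) and (iii)/(v) are literally the same families of requirements.
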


\section{Mapping cone of \texorpdfstring{$k$}{k}-entanglement breaking maps}\label{sec-nEB}

\begin{defn}[\cite{CMW19}]
 Let $k\in\mbb{N}$. A linear map $\Phi:\M{d_1}\to\M{d_2}$ is said to be \emph{$k$-entanglement breaking ($k$-EB)} if $\Phi$ is  $k$-positive map and $(\id_k\otimes\Phi)(X)$ is separable for every $X\in(\M{k}\otimes\M{d_1})^+$. 
\end{defn}

 Note that a  linear map $\Phi:\M{d_1}\to\M{d_2}$ is entanglement breaking if and only if $\Phi$ is $k$-entanglement breaking for every $k\geq 1$.  Further, from Theorem \ref{thm-EBmap-char}(vi), this is equivalent to saying that $\Phi$ is $d_1$-entanglement breaking. 
 
\begin{rmk}\label{rmk-nEB}
 Let $\Phi:\M{d_1}\to\M{d_2}$ be a $k$-EB map.   
 \begin{enumerate}[label=(\roman*)]
      \item If $m< k$, then $\Phi$ is $m$-EB also. For, let $X\in(\M{m}\otimes\M{d_1})^+=(\M{m}(\M{d_1}))^+$. Then there exist $A_i\in\M{k}^+$ and $B_i\in\M{d_2}^+$ such that 
 \begin{align*}
    \sMatrix{(\id_m\otimes\Phi)(X)&0\\0&0_{k-m}}
    =(\id_k\otimes\Phi)(\sMatrix{X&0\\0&0_{k-m}})
    =\sum A_i\otimes B_i.
 \end{align*}
 Writing $A_i=\sMatrix{A_{11}^{(i)}&A_{12}^{(i)}\\A_{21}^{(i)}&A_{22}^{(i)}}\in\M{k}^+$, where $A_{11}^{(i)}\in\M{m}^+$ and $A_{22}^{(i)}\in\M{k-m}^+$, from above we get
 $$(\id_m\otimes\Phi)(X)=\sum A_{11}^{(i)}\otimes B_i\in\M{m}^+\otimes\M{d_2}^+,$$
 so that $(\id_m\otimes\Phi)(X)$ is separable. 
    \item Let $m\geq 1$. Then, from the definition, it follows that both $\Gamma_2\circ\Phi$ and $\Phi\circ\Gamma_1$ are $k$-EB maps for all $\Gamma_2\in\mcl{P}(d_2,m)$ and $\Gamma_1\in\mcl{P}_k(m,d_1)$. 
 \end{enumerate}
\end{rmk} 
 
 In this section, our main aim is to prove an analog of Theorem \ref{thm-EBmap-char} in the context of $k$-EB maps. First, we prove some technical lemmas. Though it did not explicitly state, the following lemma was observed in \cite{CMW19}.
 
\begin{lem}\label{lem-SN-decomp}
 Let $X\in(M_m\otimes\M{d_1})^+$. Then for $j=1,2,\cdots, SN(X)$ there exist family of isometries $V_{j_i}:\mbb{C}^j\to\mbb{C}^{m}$ and vectors $\psi_{j_i}\in\mbb{C}^j\otimes\mbb{C}^{d_1}$ such that
 \begin{align*}
   X=\sum_{i,j}(V_{j_i}\otimes I_{d_1})\ranko{\psi_{j_i}}{\psi_{j_i}}(V_{j_i}\otimes I_{d_1})^*.
 \end{align*}
\end{lem}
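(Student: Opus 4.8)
The plan is to start from a rank-one decomposition of $X$ that witnesses its Schmidt number, partition the rank-one summands according to the Schmidt rank of the vectors producing them, and then realize each summand by conjugating a rank-one operator on $\mbb{C}^j\otimes\mbb{C}^{d_1}$ with an isometry built out of a Schmidt decomposition.

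First I would unwind the definition of $SN(X)$. By definition there is a decomposition $X=\sum_l\ranko{\xi_l}{\xi_l}$ with $\xi_l\in\mbb{C}^m\otimes\mbb{C}^{d_1}$ realizing the minimum, so that $\max_l SR(\xi_l)=SN(X)$; in particular every $\xi_l$ has Schmidt rank at most $SN(X)$. For each $j\in\{1,\dots,SN(X)\}$ I would collect the indices $l$ with $SR(\xi_l)=j$, relabel them by $i$, and denote the corresponding vectors $\xi_{j_i}$, so that $X=\sum_{j}\sum_i\ranko{\xi_{j_i}}{\xi_{j_i}}$. This reorganizes the decomposition exactly along the index structure demanded by the statement.

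Next, fix a vector $\xi_{j_i}$ of Schmidt rank precisely $j$ and apply the Schmidt decomposition \eqref{equ-schmdt-rnk}, writing $\xi_{j_i}=\sum_{s=1}^j\lambda_s(u_s\otimes v_s)$ with $\{u_s\}_{s=1}^j\subseteq\mbb{C}^m$ and $\{v_s\}_{s=1}^j\subseteq\mbb{C}^{d_1}$ orthonormal and all $\lambda_s>0$ (as the Schmidt rank equals $j$). Using the orthonormality of $\{u_s\}$, I define the isometry $V_{j_i}:\mbb{C}^j\to\mbb{C}^m$ by $V_{j_i}e_s=u_s$ for $1\le s\le j$, and set $\psi_{j_i}:=\sum_{s=1}^j\lambda_s(e_s\otimes v_s)\in\mbb{C}^j\otimes\mbb{C}^{d_1}$. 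A direct computation gives $(V_{j_i}\otimes I_{d_1})\psi_{j_i}=\sum_s\lambda_s(u_s\otimes v_s)=\xi_{j_i}$, whence $(V_{j_i}\otimes I_{d_1})\ranko{\psi_{j_i}}{\psi_{j_i}}(V_{j_i}\otimes I_{d_1})^*=\ranko{\xi_{j_i}}{\xi_{j_i}}$. Summing this identity over $i$ and $j$ produces the claimed formula.

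There is no deep obstacle here; the real content is simply reading off from the definition of the Schmidt number that $X$ admits a rank-one decomposition in which \emph{every} vector has Schmidt rank at most $SN(X)$. The one point genuinely requiring care is that the left Schmidt vectors $\{u_s\}_{s=1}^j$ form an orthonormal set, which is exactly what makes $V_{j_i}$ an isometry (and not merely a contraction); the remainder is bookkeeping of the double index $(i,j)$ together with the routine verification that conjugation by $V_{j_i}\otimes I_{d_1}$ carries $\ranko{\psi_{j_i}}{\psi_{j_i}}$ onto $\ranko{\xi_{j_i}}{\xi_{j_i}}$.
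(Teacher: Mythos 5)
Your proof is correct and follows essentially the same route as the paper's: take a rank-one decomposition realizing $SN(X)$, group the vectors by Schmidt rank, and factor each vector of Schmidt rank $j$ through an isometry $\mbb{C}^j\to\mbb{C}^{m}$. The only difference is that the paper simply cites \cite[Lemma 1.2]{CMW19} for the factorization step, whereas you prove that step inline from the Schmidt decomposition (the orthonormality of the left Schmidt vectors giving the isometry), which is the same argument made self-contained.
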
 
 
\begin{proof}
 Let $r:=SN(X)$. Then there exist $z_i\in\mbb{C}^{m}\otimes\mbb{C}^{d_1}$ with $SR(z_i)\leq r$ such that  
 \begin{align}\label{eq-X-decomp}
   X=\sum_{i}\ranko{z_i}{z_i}=\sum_{j=1}^r\sum_{SR(z_i)=j}\ranko{z_i}{z_i}.
 \end{align}
  Note that whenever $SR(z_i)=j$, from \cite[Lemma 1.2]{CMW19}, there exist isometry $V_{j_i}:\mbb{C}^j\to\mbb{C}^{m}$ and a vector $\psi_{j_i}\in\mbb{C}^j\otimes\mbb{C}^{d_1}$ such that $z_i=(V_{j_i}\otimes I_{d_1})\psi_{j_i}$. Thus 
 $$X=\sum_{j=1}^r\sum_{i}(V_{j_i}\otimes I_{d_1})\ranko{\psi_{j_i}}{\psi_{j_i}}(V_{j_i}\otimes I_{d_1})^*.$$
 In \eqref{eq-X-decomp}, if there is no $z_i$ with $SR(z_i)=j$, then the corresponding term is taken to be zero, and take $\psi_{j_i}=0$. 
\end{proof} 
  
 The following result is known in the quantum information theory (c.f \cite{TeHo00, RaAl07,SSZ09, Sko11}), but we could not find out in the following form (except $(i)\Leftrightarrow(v)$), hence providing proof.
 
\begin{lem}\label{lem-k-pos}
 Let $\Phi:\M{d_1}\to\M{d_2}$ be a positive map and $k\geq 1$. Then the following conditions are equivalent: 
 \begin{enumerate}[label=(\roman*)]
    \item $\Phi$ is $k$-positive.
    \item $(\id_{m}\otimes\Phi)(X)\in (\M{m}\otimes\M{d_2})^+$ for all $X\in(\M{m}\otimes\M{d_1})^+$ with $SN(X)\leq k$ and $m\geq 1$.
    \item $(\id_{d_2}\otimes\Phi)(X)\in (\M{d_2}\otimes\M{d_2})^+$ for all $X\in(\M{d_2}\otimes\M{d_1})^+$ with $SN(X)\leq k$.
    \item $\Phi\circ\Psi$ is CP for every $k$-PEB map $\Psi:\M{m}\to\M{d_1}$ and $m\geq 1$. 
    \item $\Phi\circ\Psi$ is CP for every $k$-PEB map $\Psi:\M{d_2}\to\M{d_1}$.
 \end{enumerate}
\end{lem}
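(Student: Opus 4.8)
The plan is to run the cycle $(i)\Rightarrow(ii)\Rightarrow(iii)\Rightarrow(v)\Rightarrow(i)$ and to slot $(iv)$ in through $(ii)\Rightarrow(iv)\Rightarrow(v)$, using three tools: the Schmidt-number decomposition of Lemma \ref{lem-SN-decomp}, the Choi--Jamiolkowski identity $C_{\Phi\circ\Psi}=(\id_m\otimes\Phi)(C_\Psi)$ for a map $\Psi\colon\M{m}\to\M{d_1}$, and the equivalence $(i)\Leftrightarrow(v)$ that the statement already flags as known.

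For $(i)\Rightarrow(ii)$, fix $X\in(\M{m}\otimes\M{d_1})^+$ with $SN(X)\le k$ and apply Lemma \ref{lem-SN-decomp} to write $X=\sum_{i,j}(V_{j_i}\otimes I_{d_1})\ranko{\psi_{j_i}}{\psi_{j_i}}(V_{j_i}\otimes I_{d_1})^*$ with isometries $V_{j_i}\colon\mbb{C}^j\to\mbb{C}^m$, vectors $\psi_{j_i}\in\mbb{C}^j\otimes\mbb{C}^{d_1}$, and $1\le j\le SN(X)\le k$. Because $V_{j_i}$ acts only on the first tensor leg, pushing $\id_m\otimes\Phi$ through the conjugation turns the $I_{d_1}$'s into $I_{d_2}$'s and leaves the $V_{j_i}$'s untouched, giving
\[
  (\id_m\otimes\Phi)(X)=\sum_{i,j}(V_{j_i}\otimes I_{d_2})(\id_j\otimes\Phi)(\ranko{\psi_{j_i}}{\psi_{j_i}})(V_{j_i}\otimes I_{d_2})^*.
\]
As $\Phi$ is $k$-positive it is $j$-positive for every $j\le k$, so each $(\id_j\otimes\Phi)(\ranko{\psi_{j_i}}{\psi_{j_i}})\ge0$; conjugation preserves positivity and the sum stays positive. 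Taking $m=d_2$ immediately gives $(ii)\Rightarrow(iii)$.

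The conditions $(iii)$, $(iv)$, $(v)$ are then read off from the Choi--Jamiolkowski correspondence: a $k$-PEB map $\Psi\colon\M{m}\to\M{d_1}$ is exactly a CP-map with $SN(C_\Psi)=SN(\Psi)\le k$, and $\Phi\circ\Psi$ is CP iff $C_{\Phi\circ\Psi}=(\id_m\otimes\Phi)(C_\Psi)\ge0$. Hence $(ii)$ directly yields $(iv)$, restricting to $m=d_2$ gives $(iv)\Rightarrow(v)$, and the same bijection (between $k$-PEB maps $\M{d_2}\to\M{d_1}$ and matrices in $(\M{d_2}\otimes\M{d_1})^+$ of Schmidt number $\le k$) shows $(iii)\Leftrightarrow(v)$, in particular $(iii)\Rightarrow(v)$. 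The same correspondence even gives a direct $(iv)\Rightarrow(i)$: for $\xi\in\mbb{C}^k\otimes\mbb{C}^{d_1}$ take the $k$-PEB map $\Psi\colon\M{k}\to\M{d_1}$ with $C_\Psi=\ranko{\xi}{\xi}$, so that $(iv)$ with $m=k$ forces $(\id_k\otimes\Phi)(\ranko{\xi}{\xi})=C_{\Phi\circ\Psi}\ge0$; since an arbitrary element of $(\M{k}\otimes\M{d_1})^+$ is a sum of such rank-one matrices, $\Phi$ is $k$-positive.

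The only non-elementary link is $(v)\Rightarrow(i)$. I would obtain it by applying Theorem \ref{thm-dual-cone} to the mapping cone $\msc{C}=\mcl{PEB}_k(d_1,d_2)$, whose dual cone is $\mcl{P}_k(d_1,d_2)$ by \eqref{eq-P-n-dual}: condition (ii) of that theorem reads $\Phi\circ\Theta^*\in\mcl{CP}(d_2)$ for all $\Theta\in\mcl{PEB}_k(d_1,d_2)$, and since $\Theta\mapsto\Theta^*$ is a bijection of $\mcl{PEB}_k(d_1,d_2)$ onto $\mcl{PEB}_k(d_2,d_1)$ (as $SN(\Theta^*)=SN(\Theta)$), this is precisely $(v)$. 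I expect this to be the main obstacle if one insists on an elementary proof: closing $(v)\Rightarrow(i)$ by hand forces one to embed a Schmidt-rank-$r$ vector with $r\le\min(k,d_1)$ into $\mbb{C}^{d_2}$, which fails when $d_2<k$, so routing the reverse direction through the dual-cone machinery of Theorem \ref{thm-dual-cone} is the clean way to cover all dimensions at once.
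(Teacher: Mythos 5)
Your proposal is correct and follows essentially the same route as the paper: $(i)\Rightarrow(ii)$ via Lemma \ref{lem-SN-decomp} and the intertwining of the isometries with $\id\otimes\Phi$, the identification of $k$-PEB maps with positive matrices of Schmidt number at most $k$ under the Choi--Jamiolkowski correspondence for $(ii)\Leftrightarrow(iv)$ and $(iii)\Leftrightarrow(v)$, and the non-elementary step $(v)\Rightarrow(i)$ delegated to the duality $\mcl{PEB}_k(d_1,d_2)^\circ=\mcl{P}_k(d_1,d_2)$ (the paper cites \cite[Theorem 3]{Sko11} directly, while you invoke Theorem \ref{thm-dual-cone} together with \eqref{eq-P-n-dual}, which is the same result packaged inside the paper). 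Your additional direct argument for $(iv)\Rightarrow(i)$ is correct but redundant given the cycle you already close.
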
 
 
\begin{proof}
 The implications $(ii)\Rightarrow (i)$, $(ii)\Rightarrow (iii)$, $(iv)\Rightarrow (v)$ are trivial, and $(v)\Rightarrow (i)$ follows from \cite[Theorem 3]{Sko11}. Now we prove the remaining implications.\\ \\ 
  $(i)\Rightarrow (ii)$ Let $X\in(\M{m}\otimes\M{d_1})^+$. If $m\leq k$, then $(\id_{m}\otimes\Phi)(X)\in (\M{m}\otimes\M{d_2})^+$ as $\Phi$ is $m$-positive. Now suppose $m>k$ and  $r:=SN(X)\leq k$. Then , from Lemma \ref{lem-SN-decomp}, there exist isometries $V_{j_i}:\mbb{C}^j\to\mbb{C}^{m}$ and a vectors $\psi_{j_i}\in\mbb{C}^j\otimes\mbb{C}^{d_1}$, where $j\leq r$ such that  
 \begin{align}\label{eq-k-pos-decomp}
     (\id_{m}\otimes\Phi)(X)
         &=(\id_{m}\otimes\Phi)\Big(\sum_{i,j}(V_{j_i}\otimes I_{d_1})\ranko{\psi_{j_i}}{\psi_{j_i}}(V_{j_i}\otimes I_{d_1})^*\Big). \notag\\
         &=\sum_{i,j}(V_{j_i}\otimes I_{d_2})\big((\id_j\otimes\Phi)(\ranko{\psi_{j_i}}{\psi_{j_i}})\big)(V_{j_i}\otimes I_{d_2})^*.
 \end{align}
 Since $\Phi$ is $k$-positive we have $(\id_j\otimes\Phi)(\ranko{\psi_{j_i}}{\psi_{j_i}})$ is positive for all $i$ and $ j\leq r$. From the above equation we conclude that $(\id_{m}\otimes\Phi)X$ is positive.\\ \\
 $(ii)\Rightarrow (iv)$ Let $\Psi\in\mcl{PEB}_k(m,d_1)$, where $m\geq 1$. Then $C_\Psi\in (\M{m}\otimes\M{d_1})^+$ and $SN(C_\Psi)=SN(\Psi)\leq k$, hence by assumption $C_{\Phi\circ\Psi}=(\id_m\otimes\Phi)(C_\Psi)\in(\M{m}\otimes\M{d_2})^+$. Thus $\Phi\circ\Psi$ is a CP-map. \\ \\ 
 $(iv)\Rightarrow (ii)$ Let $m\geq 1$ and $X\in(\M{m}\otimes\M{d_1})^+$ be such that  $SN(X)\leq k$. Choose $\Psi\in\mcl{CP}(m,d_1)$ such that $C_\Psi=X$. Since $SN(\Psi)=SN(X)\leq k$, by assumption, $\Phi\circ\Psi$ is a CP-map, so that $(\id_{m}\otimes\Phi)(X)=C_{\Psi\circ\Psi}$ is positive. \\ \\
 $(iii)\Leftrightarrow (v)$ Proof is the same as that of $(ii)\Leftrightarrow (iv)$.  
\end{proof}   
  
\begin{thm}\label{thm-nEB-char}
 Let $\Phi:\M{d_1}\to\M{d_2}$ be a $k$-positive map, where $k> 1$. Then the following conditions are equivalent:
 \begin{enumerate}[label=(\roman*)]
       \item\label{nEB} $\Phi$ is $k$-entanglement breaking.
       \item\label{nEB-CP} $\Phi\circ\Psi$ is entanglement breaking for all CP-maps $\Psi:\M{k}\to\M{d_1}$.   
       \item\label{nEB-mCP} $\Phi\circ\Psi$ is entanglement breaking for all CP-maps $\Psi:\M{m}\to\M{d_1}$ and $1\leq m\leq k$.  
       \item\label{nEB-nPEB} $\Phi\circ\Psi$ is entanglement breaking for all $k$-PEB maps $\Psi:\M{d_1}\to\M{d_1}$.
       \item\label{nEB-nPEB-m} $\Phi\circ\Psi$ is entanglement breaking for all $k$-PEB maps $\Psi:\M{m}\to\M{d_1}$ and $m\geq 1$. 
       \item\label{nEB-proj} $\Phi\circ Ad_P$ is entanglement breaking for all projections $P\in\M{d_1}$ with rank$(P)=\min\{k,d_1\}$.
       \item\label{nEB-SN-m} $SN\big((\id_{m}\otimes\Phi)(X)\big)=1$ for all $X\in(\M{m}\otimes\M{d_1})^+$ with $SN(X)\leq k$ and $m\geq 1$.
       \item\label{nEB-SN} $SN\big((\id_{d_1}\otimes\Phi)(X)\big)=1$ for all $X\in(\M{d_1}\otimes\M{d_1})^+$ with $SN(X)\leq k$.
      \item\label{pos-nEB} $\Gamma\circ\Phi$ is $k$-positive for all positive maps $\Gamma:\M{d_2}\to\M{m}$ and $m\geq 1$.  
      \item\label{pos-nEB-1} $\Gamma\circ\Phi$ is $k$-positive for all positive maps $\Gamma:\M{d_2}\to\M{d_1}$.
      \item\label{nEB-tr} $\tr(C_\Phi C_{\Gamma\circ\Psi})\geq 0$ for all $k$-PEB maps $\Psi:\M{d_1}\to\M{d_1}$ and positive maps $\Gamma:\M{d_1}\to\M{d_2}$. 
    \end{enumerate}
\end{thm}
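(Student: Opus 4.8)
The plan is to run a web of implications organized around three clusters: the separability/Schmidt-number statements (i), (vii), (viii); the ``compose-on-the-right-gives-EB'' statements (ii)--(vi); and the ``compose-on-the-left-stays-$k$-positive'' statements (ix), (x) together with the trace inequality (xi). Two equivalences are essentially free and I would record them first: (i)$\Leftrightarrow$(ii) by Choi--Jamiolkowski, since every $X\in(\M{k}\otimes\M{d_1})^+$ is $C_\Psi$ for a CP map $\Psi:\M{k}\to\M{d_1}$ with $(\id_k\otimes\Phi)(X)=C_{\Phi\circ\Psi}$, and separability of $C_{\Phi\circ\Psi}$ is exactly $\Phi\circ\Psi$ being EB. The hub of the whole argument will be condition (vii).

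The one genuinely computational step is (i)$\Rightarrow$(vii), and I would prove it using Lemma \ref{lem-SN-decomp} together with the downward closure of Remark \ref{rmk-nEB}(i). Given $X\in(\M{m}\otimes\M{d_1})^+$ with $SN(X)\le k$, write $X=\sum_{i,j}(V_{j_i}\otimes I_{d_1})\ranko{\psi_{j_i}}{\psi_{j_i}}(V_{j_i}\otimes I_{d_1})^*$ with $V_{j_i}:\mbb{C}^j\to\mbb{C}^m$ isometries and $j\le SN(X)\le k$; pushing $\id_m\otimes\Phi$ through gives a sum of $(V_{j_i}\otimes I_{d_2})(\id_j\otimes\Phi)(\ranko{\psi_{j_i}}{\psi_{j_i}})(V_{j_i}\otimes I_{d_2})^*$, and since $\Phi$ is $j$-EB each block is separable, a property preserved by conjugation with $V_{j_i}\otimes I_{d_2}$ and by summation. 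Then (vii)$\Rightarrow$(i) and (vii)$\Rightarrow$(viii) are the specializations $m=k$ and $m=d_1$, while (viii)$\Rightarrow$(vii) follows by the same isometric device, embedding $\mbb{C}^j\hookrightarrow\mbb{C}^{d_1}$ (legitimate as $j\le k$), the case $k\ge d_1$ reducing to ordinary EB where the constraint $SN(X)\le k$ is vacuous.

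Next I would attach the composition conditions to the hub. From (vii), condition (v) is immediate: a $k$-PEB map $\Psi$ has $SN(C_\Psi)\le k$, so $C_{\Phi\circ\Psi}=(\id\otimes\Phi)(C_\Psi)$ is separable, i.e.\ $\Phi\circ\Psi$ is EB. Then (v)$\Rightarrow$(iv) ($m=d_1$), (v)$\Rightarrow$(iii) (a CP map $\M{m}\to\M{d_1}$ with $m\le k$ is automatically $k$-PEB), (iii)$\Rightarrow$(ii), and (iv)$\Rightarrow$(vi) (as $Ad_P$ is $k$-PEB) are all trivial. The step that actually closes this loop is (vi)$\Rightarrow$(ii), which I expect to be the main obstacle: a single family of rank-$k$ projections must control all CP maps out of $\M{k}$. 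Given $\Psi=\sum_i Ad_{W_i}:\M{k}\to\M{d_1}$, I would factor each term as $Ad_{W_i}=Ad_{U_i^*}\circ Ad_{N_i^*}$ through an isometry $U_i:\mbb{C}^k\to\mbb{C}^{d_1}$ whose range contains $\mrm{range}(W_i^*)$, where $N_i=U_i^*W_i^*\in\M{k}$; then $P_i=U_iU_i^*$ is a rank-$\min\{k,d_1\}$ projection with $Ad_{P_i}=Ad_{U_i^*}\circ Ad_{U_i}$ and $Ad_{U_i}\circ Ad_{U_i^*}=\id_k$, so $\Phi\circ Ad_{U_i^*}=(\Phi\circ Ad_{P_i})\circ Ad_{U_i^*}$ is EB by (vi) and stability of the EB cone under composing with CP maps. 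Hence $\Phi\circ Ad_{W_i}=(\Phi\circ Ad_{U_i^*})\circ Ad_{N_i^*}$ is EB, and summing yields $\Phi\circ\Psi$ EB; when $k\ge d_1$ one has $P=I$ and (vi) already asserts $\Phi$ is EB. This absorbs (iv) and (vi) into the equivalence class.

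Finally I would handle (ix), (x), (xi). Using Lemma \ref{lem-k-pos}, ``$\Gamma\circ\Phi$ is $k$-positive'' rewrites as ``$(\Gamma\circ\Phi)\circ\Psi$ is CP for every $k$-PEB $\Psi$''; fixing $\Psi$, the map $\Phi\circ\Psi$ is CP (again Lemma \ref{lem-k-pos}) and Theorem \ref{thm-EBmap-char} converts ``$\Gamma\circ(\Phi\circ\Psi)$ CP for all positive $\Gamma$'' into ``$\Phi\circ\Psi$ EB'', so matching the codomains of $\Gamma$ with the two forms of the lemma and theorem gives (ix)$\Leftrightarrow$(v) and (x)$\Leftrightarrow$(iv). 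For (xi) I would establish $\tr(C_{\Gamma\circ\Phi}C_\Psi)=\tr(C_\Phi C_{\Gamma^*\circ\Psi})$ from the pairing formula $\tr(C_AC_B)=\sum_{ij}\tr(A(E_{ij})B(E_{ji}))$ and $\tr(\Gamma(Y)Z)=\tr(Y\Gamma^*(Z))$; since $\Gamma\circ\Phi$ is automatically positive, the duality $\mcl{P}_k(d_1)^\circ=\mcl{PEB}_k(d_1)$ of \eqref{eq-P-n-dual} characterizes its $k$-positivity by $\tr(C_{\Gamma\circ\Phi}C_\Psi)\ge 0$ over all $k$-PEB $\Psi$, and letting $\Gamma\mapsto\Gamma^*$ range over positive maps turns this into (xi), giving (x)$\Leftrightarrow$(xi). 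The genuine difficulty is concentrated in (vi)$\Rightarrow$(ii); the rest is bookkeeping, chiefly tracking which ``$\M{m}$ versus $\M{d_1}$'' form of Lemma \ref{lem-k-pos} and Theorem \ref{thm-EBmap-char} to invoke.
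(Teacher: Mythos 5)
Your proposal is correct and uses essentially the same approach as the paper: the same key ingredients appear in the same roles --- Lemma \ref{lem-SN-decomp} plus downward closure (Remark \ref{rmk-nEB}) for (i)$\Rightarrow$(vii), Lemma \ref{lem-k-pos} and Theorem \ref{thm-EBmap-char} for the $k$-positivity conditions (ix)/(x), the duality \eqref{eq-P-n-dual} together with the adjoint pairing identity $\tr(C_{\Gamma\circ\Phi}C_\Psi)=\tr(C_\Phi C_{\Gamma^*\circ\Psi})$ for (xi), and a rank-$k$ projection/isometry factorization of Choi--Kraus operators for the implication out of (vi). The differences are only in the wiring of the implication web (you prove (vi)$\Rightarrow$(ii), (viii)$\Rightarrow$(vii), and (ix)$\Leftrightarrow$(v), (x)$\Leftrightarrow$(iv), where the paper instead proves (vi)$\Rightarrow$(iv), (viii)$\Rightarrow$(iv), (iv)$\Rightarrow$(i), (i)$\Rightarrow$(ix), and (x)$\Rightarrow$(iv) via Theorem \ref{thm-dual-cone}), and each of your rewired steps is sound.
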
  
  
\begin{proof}
 The implications $\ref{nEB-mCP}\Rightarrow\ref{nEB-CP}$,  $\ref{nEB-nPEB-m}\Rightarrow\ref{nEB-nPEB}$,  $\ref{nEB-SN-m}\Rightarrow\ref{nEB-SN}$, $\ref{nEB-nPEB}\Rightarrow\ref{nEB-proj}$ and   $\ref{pos-nEB}\Rightarrow\ref{pos-nEB-1}$ are trivial. Now we prove the remaining implications as follows:\\ \\
 $\ref{nEB}\Rightarrow\ref{nEB-CP}$ Let $\Psi\in\mcl{CP}(k,d_1)$. Since $C_\Psi\in (\M{k}\otimes\M{d_1})^+$ and $\Phi$ is $k$-EB $C_{\Phi\circ\Psi}=(\id_k\otimes\Phi)(C_\Psi)$ is separable, so that $\Phi\circ\Psi$ is an EB-map. \\ \\
 $\ref{nEB-CP}\Rightarrow\ref{nEB}$ Let $X\in(\M{k}\otimes\M{d_1})^+$ and $\Psi\in\mcl{CP}(k,d_1)$ be such that $C_\Psi=X$. By assumption $\Phi\circ\Psi$ is an EB-map, hence $(\id_k\otimes\Phi)(X)=C_{\Phi\circ\Psi}$ is separable. Since $X\in(\M{k}\otimes\M{d_1})^+$ is arbitrary this implies that $\Phi$ is a $k$-EB map.\\  \\
 $\ref{nEB}\Rightarrow\ref{nEB-mCP}$. Since $\Phi$ is $m$-EB for $1\leq m\leq k$, from the equivalence of $\ref{nEB}$ and $\ref{nEB-CP}$, we have $\Phi\circ\Psi$ is EB for all $\Psi\in\mcl{CP}(m,d_1)$.\\ \\
 $\ref{nEB}\Rightarrow\ref{nEB-SN-m}$ Let $X\in(\M{m}\otimes\M{d_1})^+$ with $r:=SN(X)\leq k$. Then, by  Lemma \ref{lem-k-pos},  $(\id_{m}\otimes\Phi)(X)$ is positive. Further, by Lemma \ref{lem-SN-decomp}, there exist isometries  $V_{j_i}:\mbb{C}^j\to\mbb{C}^{m}$ and vectors $\psi_{j_i}\in\mbb{C}^j\otimes\mbb{C}^{d_1}, 1\leq j\leq r$ such that 
  \begin{align*}
     SN\big((\id_{m}\otimes\Phi)(X)\big)
         &=SN\Big(\sum_{i,j}(V_{j_i}\otimes I_{d_2})\big((\id_j\otimes\Phi)(\ranko{\psi_{j_i}}{\psi_{j_i}})\big)(V_{j_i}\otimes I_{d_2})^*\Big)\\
         &\leq\max_{i,j}\{SN((V_{j_i}\otimes I_{d_2})\big((\id_j\otimes\Phi)(\ranko{\psi_{j_i}}{\psi_{j_i}})\big)(V_{j_i}\otimes I_{d_2})^*)\}\\
         &\leq \max_{i,j}\{SN\big((\id_j\otimes\Phi)\ranko{\psi_{j_i}}{\psi_{j_i}}\big)\}\\
         &=1. 
 \end{align*}
 The last equality follows as $\Phi$ is $j$-entanglement breaking for all $1\leq j\leq k$.\\ \\
 $\ref{nEB-SN-m}\Rightarrow\ref{nEB-nPEB-m}$
 Let $\Psi\in\mcl{PEB}_k(m,d_1)$, where $m\geq 1$.  Since $C_\Psi\in(\M{m}\otimes\M{d_1})^+$ and $SN(C_\Psi)\leq k$, by Lemma \ref{lem-k-pos},  $C_{\Phi\circ\Psi}=(\id_{m}\otimes\Phi)(C_\Psi)\in(\M{m}\otimes\M{d_2})^+$. 
 Further, from the assumption 
 \begin{align*}
    SN(C_{\Phi\circ\Psi})=SN\big((\id_{m}\otimes\Phi)C_\Psi\big)=1.
 \end{align*}
 Thus, $\Phi\circ\Psi$ is an EB-map. \\ \\
 $\ref{nEB-nPEB}\Rightarrow\ref{nEB}$ 
  We prove this in two case. \\
  {\ul{Case (1)}:} Suppose $k\geq d_1$. Then $\Psi=\id_{d_1}$ is a $k$-PEB map, hence by assumption $\Phi=\Phi\circ\Psi$ is an EB-map. In particular, $\Phi$ is a $k$-EB map.\\
 {\ul{Case (2)}:} Suppose $k< d_1$. Let $X\in(\M{k}\otimes\M{d_1})^{+}$. Without loss of generality assume that $X$ is of rank one; hence there exists $V\in\M{k\times d_1}$ such that $X=C_{Ad_{V}}$. Let $\iota:\mbb{C}^k\to\mbb{C}^{d_1}$ be the inclusion map. Note that  $SN(Ad_{\iota\circ V})=rank(\iota\circ V)\leq k$. Thus, $Ad_{\iota\circ V}\in\mcl{PEB}_k(d_1)$, hence by assumption  $C_{\Phi\circ Ad_{\iota\circ V}}$ is separable. Therefore, 
 \begin{align*}
      (\id_{k}\otimes\Phi)(X)
          =C_{\Phi\circ Ad_V}
          =(\iota\otimes I_{d_2})^*C_{\Phi\circ Ad_{\iota\circ V}}(\iota\otimes I_{d_2})
 \end{align*}
 is separable.  Since $X\in(\M{k}\otimes\M{d_1})^{+}$ is arbitrary we conclude that  $\Phi$ is $k$-EB. \\   \\
 $\ref{nEB-SN}\Rightarrow\ref{nEB-nPEB}$
  Let  $\Psi\in\mcl{PEB}_k(d_1)$. Since $SN(C_{\Psi})\leq k$, by assumption $C_{\Phi\circ \Psi} = (\id_{d_1}\otimes\Phi)(C_{\Psi})$ is separable. Hence $\Phi\circ\Psi$ is EB.\\ \\
  $\ref{nEB}\Rightarrow\ref{pos-nEB}$ Let $\Gamma\in\mcl{P}(d_2,m)$, where $m\geq 1$ and $X\in (\M{k}\otimes\M{d_1})^{+}$. By assumption $(\id_{k}\otimes\Phi)(X)$ is separable, hence there exist $A_{i}\in\M{k}^{+}$ and $B_{i}\in\M{d_2}^{+}$ such that $(\id_{k}\otimes\Phi)(X)=\sum_{i}A_{i}\otimes B_{i}.$ Then, $(\id_{k}\otimes\Gamma\circ\Phi)(X)=\sum_{i}A_{i}\otimes \Gamma(B_{i})\geq 0$. We conclude that $\Gamma\circ\Phi$ is $k$-positive.\\ \\
 $\ref{pos-nEB-1}\Rightarrow\ref{nEB-nPEB}$ Let $\Psi\in\mcl{PEB}(d_1)$. If $\Gamma\in\mcl{P}(d_2,d_1)$, then by assumption $\Gamma\circ\Phi\in\mcl{P}_k(d_1)$, and hence from \eqref{eq-P-n-dual}   and Theorem \ref{thm-dual-cone}, $\Gamma\circ\Phi\circ\Psi$ is CP. Since $\Gamma$ is arbitrary, by Theorem \ref{thm-EBmap-char},  $\Phi\circ\Psi$ is EB.\\ \\
 $\ref{nEB-proj}\Rightarrow\ref{nEB-nPEB}$ If $k\geq d_1$, then by assumption $\Phi=\Phi\circ Ad_{I_{d_1}}$ is EB so that $\Phi\circ\Psi$ is also EB for every $\Psi\in\mcl{PEB}_k(m,d_1)$ and $m\geq 1$. Now suppose $k<d_1$. We show that $\Phi\circ Ad_V$ is EB for all $V\in\M{d_1}$ with rank$(V)\leq k$.  By singular value decomposition there exists unitaries $U,U'\in\M{d_1}$ and a diagonal matrix $D\in\M{r}$ such that $V=U^*\sMatrix{D&0\\0&0}U'$, where $r=rank(V)$. Then $V=XP$, where $X=U^*\sMatrix{D&0\\0&0}U'$ and $P=U'^*\sMatrix{I_k&0\\0&0}U'$, which is a projection of rank $k$. By assumption $\Phi\circ Ad_P$ is EB, and hence $\Phi\circ Ad_V=\Phi\circ Ad_P\circ Ad_X$ is also EB.  Since $V\in\M{d_1}$ is arbitrary we have $\Phi\circ\Psi$ is EB for all $\Psi\in\mcl{PEB}_k(d_1)$.\\ \\
 $\ref{pos-nEB-1}\Rightarrow\ref{nEB-tr}$ Let $\Gamma\in\mcl{P}(d_1,d_2)$ and $\Psi\in\mcl{PEB}_k(d_1)$. By assumption, $\Gamma^*\circ\Phi$ is $k$-positive. Hence from \eqref{eq-P-n-dual}, we have $0\leq\tr(C_{\Gamma^*\circ\Phi}C_\Psi)=\tr(C_{\Phi}C_{\Gamma\circ\Psi})$. \\ \\
 $\ref{nEB-tr}\Rightarrow\ref{pos-nEB-1}$  Let $\Gamma\in\mcl{P}(d_2,d_1)$.  Then by assumption $\tr(C_{\Gamma\circ\Phi}C_\Psi)=\tr(C_{\Phi}C_{\Gamma^*\circ\Psi})\geq 0$ for all $\Psi\in\mcl{PEB}_k(d_1)$. Now from \eqref{eq-P-n-dual} it follows that $\Gamma\circ\Phi$ is $k$-positive.
\end{proof}  

\begin{note}
 From \ref{nEB-nPEB} it follows that if $\Phi$ is $k$-EB, then $\Phi\circ Ad_P$ is entanglement breaking for all projections $P\in\M{d_1}$ with rank less than or equal to $k$. The converse is also true because of \ref{nEB-proj}. This equivalence was observed in \cite[Lemma 6.1]{ChCh20}.   
\end{note}

  From Theorem \ref{thm-EBmap-char}, we have $\Phi:\M{d_1}\to\M{d_2}$ is an EB-map if and only if the Choi matrix $C_\Phi=\sum_{i,j}E_{ij}\otimes \Phi(E_{ij})$ is separable, where $\{E_{ij}\}_{i,j=1}^{d_1}$ is the standard matrix units in $\M{d_1}$. Next we prove an analogue of this result in the context of $k$-EB maps. Suppose $\{F_{ij}\}_{i,j=1}^{d}$ is a complete set of matrix units in $\M{d_1}$ (i.e., there exists an orthonormal basis $\{f_i\}_{i=1}^d$ for $\mbb{C}^d$ such that $F_{ij}=\ranko{f_i}{f_j}$) and $U\in\M{d_1}$ is a unitary such that $UF_{ij}U^*=E_{ij}$, for all $1\leq i,j\leq d_1$. Then, by \cite[Lemma 4.1.2]{Sto13}, the Choi matrix $C_\Phi^F$ w.r.t $\{F_{ij}\}_{i,j}$ is given by  
  \begin{align}\label{eq-Stormer}
      C_\Phi^F:=\sum_{i,j=1}^{d_1}F_{ij}\otimes\Phi(F_{ij})=Ad_{U\otimes I_{d_2}}C_{\Phi\circ Ad_U}.
  \end{align}   

\begin{defn} 
Suppose $\Phi:\M{d_1}\to\M{d_2}$ is a linear map and $\{F_{ij}\}_{i,j=1}^{d_1}$ is a complete set of matrix units in $\M{d_1}$. Given $1\leq k\leq d_1$ and $i_1,i_2,\cdots, i_k\in\{1,2,\cdots,d_1\}$ the matrix  
 \begin{align*}
  C_{\Phi(i_1,\cdots,i_k)}^F
   :=\sum_{p,q=1}^k E_{pq}^{(k)}\otimes \Phi(F_{{i_p}i_q})
    =\Matrix{\Phi(F_{i_1i_1})&\Phi(F_{i_1i_2})&\cdots&\Phi(F_{i_1i_k})\\ 
                \vdots&\vdots&\cdots&\vdots\\
                \Phi(F_{i_ki_1})&\Phi(F_{i_ki_2})&\cdots&\Phi(F_{i_ki_k}) 
               }
 \end{align*} 
  in $\M{k}\otimes\M{d_2}=\M{k}(\M{d_2})$ is called a \emph{principal $(k\times k)$-block submatrix} of $C_\Phi^F$, where $E_{ij}^{(k)}\in\M{k}$ is the standard matrix units. The principal $(k\times k)$-block submatrix  $C^E_{\Phi(i_1,\cdots,i_k)}$ of $C_\Phi$ w.r.t the standard matrix units $\{E_{ij}\}_{i,j}\subseteq\M{d_1}$ is denoted simply by $C_{\Phi(i_1,\cdots,i_k)}$.
\end{defn}
 
  Let $\{e_i^{(k)}\}_{i=1}^k$ be the standard orthonormal basis for $\mbb{C}^k$ and $\{f_i\}_{i=1}^{d_1}$ be an orthonormal basis for $\mbb{C}^{d_1}$ such that $F_{ij}=\ranko{f_i}{f_j}$. Then 
  \begin{align*}
     C_{\Phi(i_1,\cdots,i_k)}^F
     =(\id_k\otimes\Phi)(\sum_{p,q=1}^kE_{pq}^{(k)}\otimes F_{i_pi_q})
     =(\id_k\otimes\Phi)(\ranko{\sum_{p=1}^k e_p^{(k)}\otimes f_{i_p}}{\sum_{q=1}^k e_q^{(k)}\otimes f_{i_q}}).
  \end{align*}
  Note that $\ranko{\sum_{p=1}^k e_p^{(k)}\otimes f_{i_p}}{\sum_{q=1}^k e_q^{(k)}\otimes f_{i_q}}\in (\M{k}\otimes\M{d_1})^+$. Hence, $C_{\Phi(i_1,\cdots,i_k)}^F$ is positive (resp. separable) if $\Phi$ is $k$-positive (resp. $k$-EB). 
 
\begin{thm}\label{thm-kEB-Choi-block}
 Let $\Phi:\M{d_1}\to\M{d_2}$ be a $k$-positive map, where $1\leq k\leq d_1$. Then the following conditions are equivalent:
 \begin{enumerate}[label=(\roman*)]
    \item $\Phi$ is $k$-EB.
    \item Given a complete set of matrix units $\{F_{ij}\}_{i,j=1}^{d_1}\subseteq\M{d_1}$, every  principal $(k\times k)$-block submatrix $C_{\Phi(i_1,\cdots,i_k)}^F$ of the Choi matrix $C_\Phi^F$  is separable. 
     \item Given a complete set of matrix units $\{F_{ij}\}_{i,j=1}^{d_1}\subseteq\M{d_1}$, the principal $(k\times k)$-block submatrix  $C_{\Phi(1,2,\cdots,k)}^F$ of the Choi matrix $C_\Phi^F$  is separable. 
 \end{enumerate}
\end{thm}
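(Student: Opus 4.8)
The plan is to prove the cycle $(i)\Rightarrow(ii)\Rightarrow(iii)\Rightarrow(i)$, reading the phrase ``given a complete set of matrix units'' in $(ii)$ and $(iii)$ as a universal quantifier over all such sets; this reading is forced, since the single block in $(iii)$ will have to reproduce an arbitrary rank-$k$ projection as the matrix unit system varies. The implication $(i)\Rightarrow(ii)$ is exactly the computation recorded just before the statement: for any complete set $\{F_{ij}\}$ and any $i_1,\dots,i_k$ one has $C^F_{\Phi(i_1,\cdots,i_k)}=(\id_k\otimes\Phi)(\ranko{\xi}{\xi})$ with $\xi=\sum_{p=1}^k e_p^{(k)}\otimes f_{i_p}\in\mbb{C}^k\otimes\mbb{C}^{d_1}$, which is separable because $\Phi$ is $k$-EB. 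The implication $(ii)\Rightarrow(iii)$ is immediate, as the block $C^F_{\Phi(1,2,\cdots,k)}$ is the instance $i_1=1,\dots,i_k=k$ of $(ii)$.

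For the substantial implication $(iii)\Rightarrow(i)$ I would first recognise the relevant block as a genuine Choi matrix. Starting from a complete set $\{F_{ij}=\ranko{f_i}{f_j}\}$ built from an orthonormal basis $\{f_i\}_{i=1}^{d_1}$ of $\mbb{C}^{d_1}$, introduce the isometry $W:\mbb{C}^k\to\mbb{C}^{d_1}$ with $We_p^{(k)}=f_p$ $(1\le p\le k)$, so that $W^*W=I_k$ and $WW^*=P$ is the rank-$k$ projection onto $\lspan\{f_1,\dots,f_k\}$. With $Ad_{W^*}(Y)=WYW^*$ for $Y\in\M{k}$, the identity $Ad_{W^*}(E^{(k)}_{pq})=\ranko{f_p}{f_q}=F_{pq}$ gives
\begin{align*}
 C_{\Phi\circ Ad_{W^*}}=\sum_{p,q=1}^k E^{(k)}_{pq}\otimes\Phi(F_{pq})=C^F_{\Phi(1,2,\cdots,k)}.
\end{align*}
Hence $(iii)$ says precisely that $C_{\Phi\circ Ad_{W^*}}$ is separable; a separable matrix is positive, so $\Phi\circ Ad_{W^*}$ is CP, and Theorem \ref{thm-EBmap-char}(vi) makes it entanglement breaking.

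The final step is to feed this into the projection characterisation. Given an arbitrary rank-$k$ projection $P\in\M{d_1}$, choose an orthonormal basis whose first $k$ vectors span $\ran{P}$ and build $W$ as above, so that $P=WW^*$ and, using $P^*=P$ together with $Ad_W(X)=W^*XW$, one gets $Ad_P=Ad_{W^*}\circ Ad_W$. Therefore
\begin{align*}
 \Phi\circ Ad_P=(\Phi\circ Ad_{W^*})\circ Ad_W,
\end{align*}
the composition of the entanglement breaking map $\Phi\circ Ad_{W^*}$ with the CP-map $Ad_W$; writing $\Phi\circ Ad_{W^*}$ in Holevo form (Theorem \ref{thm-EBmap-char}(viii)) and precomposing with $Ad_W$ keeps it in Holevo form, so $\Phi\circ Ad_P$ is again entanglement breaking (this is also Remark \ref{rmk-nEB}(ii) applied at level $d_1$). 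Since every rank-$k=\min\{k,d_1\}$ projection arises this way and $(iii)$ is assumed for \emph{every} complete set of matrix units, condition \ref{nEB-proj} of Theorem \ref{thm-nEB-char} then yields that $\Phi$ is $k$-EB; the degenerate case $k=1$ is immediate, since there $k$-EB coincides with positivity. I expect the only delicate points to be the bookkeeping in the Choi-matrix identity $C^F_{\Phi(1,2,\cdots,k)}=C_{\Phi\circ Ad_{W^*}}$ and the verification that composing an entanglement breaking map with a CP-map across differing matrix dimensions preserves the EB property, for which the Holevo form is the cleanest tool.
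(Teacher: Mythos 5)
Your proposal is correct and follows essentially the same route as the paper's proof: $(i)\Rightarrow(ii)\Rightarrow(iii)$ via the computation preceding the theorem, and $(iii)\Rightarrow(i)$ by attaching to an arbitrary rank-$k$ projection $P$ an adapted orthonormal basis and matrix-unit system, using the assumed separability of the distinguished block to conclude that $\Phi\circ Ad_P$ is entanglement breaking, and then invoking condition \ref{nEB-proj} of Theorem \ref{thm-nEB-char}. The only (cosmetic) difference is bookkeeping: you identify the block directly as the Choi matrix $C_{\Phi\circ Ad_{W^*}}$ of a map on $\M{k}$ via the isometry $W$ and then compose with the CP-map $Ad_W$, whereas the paper reaches the same conclusion by conjugating with a unitary through equation \eqref{eq-Stormer}.
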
 
 
\begin{proof}
 We need to prove $(iii)\Rightarrow(i)$ only. So let $P\in\M{d_1}$ be a projection of rank $k$ and $\wtilde{\Phi}=\Phi\circ Ad_P$. Suppose $\{f_i\}_{i=1}^{d_1}\subseteq\mbb{C}^{d_1}$ is an orthonormal basis such that $P=\sum_{i=1}^k\ranko{f_i}{f_i}$. Let $F_{ij}=\ranko{f_i}{f_j}$ and $U\in\M{d_1}$ be a  unitary such that $UF_{ij}U^*=E_{ij}\in\M{d_1}$ for all $1\leq i,j\leq d_1$. Since $\{F_{ij}\}_{ij}$ is a  complete set of matrix units, from \eqref{eq-Stormer}, we have 
 \begin{align*}
    C_{\wtilde{\Phi}\circ Ad_U}
       =Ad_{U^*\otimes I_{d_2}}C^F_{\wtilde{\Phi}}
       =Ad_{U^*\otimes I_{d_2}}\Big(\sum_{i,j=1}^{d_1} F_{ij}\otimes\wtilde{\Phi}(F_{ij})\Big)
       =Ad_{U^*\otimes I_{d_2}}\Big(\sum_{i,j=1}^{k} F_{ij}\otimes\Phi(F_{ij})\Big).
 \end{align*}
 Now let $W\in\M{d_1\times k}$ be such that $W(e_i^{(k)})=f_i$ for all $1\leq i\leq k$. Then  
 \begin{align*}
    C_{\wtilde{\Phi}\circ Ad_U}
       &=Ad_{U^*\otimes I_{d_2}}(W\otimes I_{d_2})\Big(\sum_{i,j=1}^{k} E_{ij}^{(k)}\otimes\Phi(F_{ij})\Big)(W\otimes I_{d_2})^*\\
       &=(UW\otimes I_{d_2})C^F_{\Phi(1,\cdots,k)}(UW\otimes I_{d_2})^*.
 \end{align*}
 Since $C^F_{\Phi(1,\cdots,k)}$ is separable we conclude that $C_{\wtilde{\Phi}\circ Ad_U}$ is separable. Hence $\Phi\circ Ad_P=\wtilde{\Phi}\circ Ad_U\circ Ad_{U^*}$ is an EB-map. Further, since $P$ is arbitrary, by Theorem \ref{thm-nEB-char}, $\Phi$ is $k$-EB. 
\end{proof} 

\begin{rmk}\label{rmk-nEB-T}
 If $\Phi:\M{d_1}\to\M{d_2}$ is a $k$-EB map, where $k\geq 1$, then $\T\circ\Phi\circ\T$ is also a $k$-EB map. For, let $\Psi\in\mcl{CP}(k,d_1)$. Since $\T\circ\Psi\circ\T\in\mcl{CP}(k,d_1)$, by Theorem \ref{thm-nEB-char}, $\Phi\circ(\T\circ\Psi\circ\T)$ is EB. Hence  
 $$\T\circ\Phi\circ\T\circ\Psi=\T\circ(\Phi\circ\T\circ\Psi\circ\T)\circ\T$$
 is also an EB-map.  Since $\Psi$ is arbitrary, by Theorem \ref{thm-nEB-char},  $\T\circ\Phi\circ\T$ is $k$-EB.  
\end{rmk}

\begin{thm}\label{thm-nEB-flipped}
 Let $\Phi:\M{d_1}\to\M{d_2}$ be a $k$-positive map, where $k\geq 1$. Then the following conditions are equivalent: 
  \begin{enumerate}[label=(\roman*)]
      \item $\Phi$ is $k$-entanglement breaking.
      \item $(\Phi\otimes\id_k)(X)$ is separable for all $X\in(\M{d_1}\otimes\M{k})^+$.  
  \end{enumerate}
\end{thm}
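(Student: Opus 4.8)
The plan is to reduce condition (ii) to condition (i) by conjugating with the tensor-flip, which turns the map $\id_k\otimes\Phi$ into $\Phi\otimes\id_k$ while preserving both positivity and separability, so that the two separability requirements become literally the same statement read in two coordinate orders.

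Concretely, for $m,n\in\mbb{N}$ let $S_{m,n}:\mbb{C}^m\otimes\mbb{C}^n\to\mbb{C}^n\otimes\mbb{C}^m$ be the swap unitary determined by $S_{m,n}(x\otimes y)=y\otimes x$, and let $\theta_{m,n}:\M{m}\otimes\M{n}\to\M{n}\otimes\M{m}$ be the associated $\ast$-isomorphism $\theta_{m,n}(Z)=S_{m,n}ZS_{m,n}^*$, so that $\theta_{m,n}(A\otimes B)=B\otimes A$ on elementary tensors. First I would record the intertwining identity
\begin{align*}
  \Phi\otimes\id_k=\theta_{k,d_2}\circ(\id_k\otimes\Phi)\circ\theta_{d_1,k},
\end{align*}
which is immediate on an elementary tensor $A\otimes C\in\M{d_1}\otimes\M{k}$ (both sides send it to $\Phi(A)\otimes C$) and then extends by linearity.

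Next I would note two elementary but essential facts about each flip $\theta_{m,n}$: being a $\ast$-isomorphism implemented by a unitary, it maps $(\M{m}\otimes\M{n})^+$ onto $(\M{n}\otimes\M{m})^+$; and since $\theta_{m,n}(\sum_i A_i\otimes B_i)=\sum_i B_i\otimes A_i$ carries one separable decomposition to another, $Z$ is separable if and only if $\theta_{m,n}(Z)$ is separable. With these in hand the equivalence is a direct chase. For $(i)\Rightarrow(ii)$: given $X\in(\M{d_1}\otimes\M{k})^+$, the element $\theta_{d_1,k}(X)$ lies in $(\M{k}\otimes\M{d_1})^+$, so by $k$-entanglement breaking $(\id_k\otimes\Phi)(\theta_{d_1,k}(X))$ is separable, and applying the separability-preserving $\theta_{k,d_2}$ shows $(\Phi\otimes\id_k)(X)$ is separable. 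The converse $(ii)\Rightarrow(i)$ runs the same argument with $\theta_{d_1,k}$ and $\theta_{k,d_2}$ replaced by their inverses: for $Y\in(\M{k}\otimes\M{d_1})^+$ put $X=\theta_{d_1,k}^{-1}(Y)\geq 0$, apply (ii), and use that $\theta_{k,d_2}^{-1}$ preserves separability to conclude $(\id_k\otimes\Phi)(Y)$ is separable, i.e.\ $\Phi$ is $k$-EB.

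There is no serious obstacle here; the entire content lies in choosing the correct flips so that the intertwining identity holds, and the only points demanding care are bookkeeping ones: keeping track of which tensor factor carries $\mbb{C}^k$ versus $\mbb{C}^{d_1}$ or $\mbb{C}^{d_2}$ at each stage, and confirming that the standing hypothesis that $\Phi$ is $k$-positive is exactly what guarantees the outputs $(\Phi\otimes\id_k)(X)$ are positive (indeed $\Phi\otimes\id_k$ is positive if and only if $\id_k\otimes\Phi$ is, again by the same conjugation), so that condition (ii) is the genuine separability strengthening of $k$-positivity, in perfect parallel to condition (i).
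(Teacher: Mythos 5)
Your proof is correct, and it takes a genuinely different and considerably more elementary route than the paper's. The paper proves $(i)\Rightarrow(ii)$ by reducing to rank-one $X=\ranko{\psi}{\psi}$, invoking \cite[Lemma 1.3]{CMW19} to write $\psi=(I_{d_1}\otimes V)\Omega_{d_1}$ for some $V\in\M{k\times d_1}$, and then performing a chain of Choi-matrix manipulations to recognize $(\Phi\otimes\id_k)(X)$ as the Choi matrix of $\big(\T\circ\Phi\circ\T\circ Ad_{V}\big)^*$; it then needs Remark \ref{rmk-nEB-T} (that $\T\circ\Phi\circ\T$ is again $k$-EB) together with Theorem \ref{thm-nEB-char} to conclude this map is EB, and the converse direction runs the same computation backwards. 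Your flip-conjugation argument replaces all of this machinery with the single identity $\Phi\otimes\id_k=\theta_{k,d_2}\circ(\id_k\otimes\Phi)\circ\theta_{d_1,k}$, plus the two observations that each $\theta_{m,n}$ is a unitarily implemented $\ast$-isomorphism (hence an order isomorphism of the positive cones) and that it carries separable decompositions to separable decompositions with the factors swapped; no rank-one reduction, no adjoints, no transposes, and no appeal to Theorem \ref{thm-nEB-char} are needed, and your closing remark that the same conjugation shows $\Phi\otimes\id_k$ is positive iff $\id_k\otimes\Phi$ is positive is also correct. What the two approaches buy: yours is shorter, self-contained, and makes transparent that the theorem is really a statement about invariance of separability under reordering of tensor factors; the paper's heavier route has the side benefit of staying entirely inside its composition/Choi-matrix framework, exhibiting $(\Phi\otimes\id_k)$ evaluated at rank-one positives explicitly as Choi matrices of compositions $\T\circ\Phi\circ\T\circ Ad_V$, which is the same toolbox it exploits throughout Section \ref{sec-nEB}.
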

 
\begin{proof} 
 $(i)\Rightarrow(ii)$ Let $X\in(\M{d_1}\otimes\M{k})^+$, and without loss of generality assume that  $X=\ranko{\psi}{\psi}$ for some $\psi\in\mbb{C}^{d_1}\otimes\mbb{C}^k$.  By \cite[Lemma 1.3]{CMW19}, there exists $V\in\M{k\times d_1}$ such that $\psi=(I_{d_1}\otimes V)(\Omega_{d_1})$. Further, applying \cite[Lemma 1.1]{CMW19} to the $k$-positive map $\Phi$, we get   
      \begin{align*}
         (\Phi\otimes\id_k)(X)
            &=(\Phi\otimes\id_k)\big((I_{d_1}\otimes V)(\ranko{\Omega_{d_1}}{\Omega_{d_1}})(I_{d_1}\otimes V)^*\big)\\
            &=(\Phi\otimes\id_k)(\id_{d_1}\otimes Ad_{V^*})(\ranko{\Omega_{d_1}}{\Omega_{d_1}})\\
            &=(\id_{d_2}\otimes Ad_{V^*})(\Phi\otimes\id_{d_1})(\ranko{\Omega_{d_1}}{\Omega_{d_1}})\\    
           &=(\id_{d_2}\otimes Ad_{V^*})(\id_{d_2}\otimes\T\circ\Phi^*\circ\T)(\ranko{\Omega_{d_2}}{\Omega_{d_2}})\\         
           &=\big(\id_{d_2}\otimes (\T\circ\Phi\circ\T\circ Ad_{V})^*\big)(\ranko{\Omega_{d_2}}{\Omega_{d_2}}).
      \end{align*}   
  Since $\T\circ\Phi\circ\T$ is $k$-EB, $\T\circ\Phi\circ\T\circ Ad_{V}:\M{k}\to\M{d_2}$ is EB. It follows from  the  above equation that $(\Phi\otimes\id_k)(X)$ is separable.   \\
  $(ii)\Rightarrow(i)$ Let $V\in\M{k\times d_1}$. Then $X=(\id_{d_1}\otimes Ad_{V^*})(\ranko{\Omega_{d_1}}{\Omega_{d_1}})\in(\M{d_1}\otimes\M{k})^+$, and the Choi-matrix of $Ad_{V^*}\circ\T\circ\Phi^*\circ\T:\M{d_2}\to\M{k}$ is given by
  \begin{align*}
     (\id_{d_2}\otimes Ad_{V^*}\circ\T\circ\Phi^*\circ\T)(\ranko{\Omega_{d_2}}{\Omega_{d_2}})
     =(\Phi\otimes\id_k)(\id_{d_1}\otimes Ad_{V^*})(\ranko{\Omega_{d_1}}{\Omega_{d_1}})
     =(\Phi\otimes\id_k)(X),
  \end{align*}
  which is separable by assumption. Thus $Ad_{V^*}\circ\T\circ\Phi^*\circ\T$ (and hence $\T\circ\Phi\circ\T\circ Ad_V$) is EB. Since $V$ is arbitrary, by Theorem \ref{thm-nEB-char}, $\T\circ\Phi\circ\T$ is $k$-EB.  Now, from Remark \ref{rmk-nEB-T}, we conclude that $\Phi$ is $k$-EB.
\end{proof}

\begin{note}
  Though the tensor product of CP-maps is a CP-map, it is not true in general that the tensor product of positive maps is a positive map. However, if $\Phi:\M{d_1}\to\M{d_2}$ is a $k$-EB map and $\Gamma:\M{k}\to\M{d_3}$ is any positive map, then from the definition  of  $k$-EB maps and Theorem \ref{thm-nEB-flipped}, it follows that
  \begin{align*}
     (\Phi\otimes\Gamma)(X)=(\id_{d_2}\otimes\Gamma)(\Phi\otimes\id_k)(X)
     \quad\mbox{and}\quad
     (\Gamma\otimes\Phi)(Y)=(\Gamma\otimes\id_{d_2})(\id_k\otimes\Phi)(Y)
  \end{align*}
  are separable for all $X\in (\M{d_1}\otimes\M{k})^+$ and  $Y\in (\M{k}\otimes\M{d_1})^+$. In particular, $\Phi\otimes\Gamma$ and $\Gamma\otimes\Phi$ are positive maps. 
\end{note} 

\begin{thm}\label{thm-mapp-cone}
 Let  $\mcl{EB}_k(d_1,d_2)$ and $\mcl{EBCP}_k(d_1,d_2)$ be the set of all $k$-entanglement breaking linear maps and $k$-entanglement breaking CP-maps from $\M{d_1}$ into $\M{d_2}$, respectively. Then both $\mcl{EB}_k(d_1,d_2)$ and $\mcl{EBCP}_k(d_1,d_2)$ are mapping cones.
\end{thm}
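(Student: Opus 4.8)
The plan is to verify directly the three defining properties of a mapping cone for each of the two sets: (a) that it is a convex cone, (b) that it is closed, and (c) that it is stable under the sandwich $\Phi \circ \Theta \circ \Psi$ by CP-maps $\Psi$ and $\Phi$. Since $\mcl{EBCP}_k(d_1,d_2) = \mcl{EB}_k(d_1,d_2) \cap \mcl{CP}(d_1,d_2)$ and the intersection of two mapping cones is again a mapping cone, it suffices to establish everything for $\mcl{EB}_k(d_1,d_2)$ and then note that $\mcl{CP}(d_1,d_2)$ is already a mapping cone; I would begin by recording this reduction.

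For the cone and convexity properties, I would argue at the level of the Choi map $\id_k \otimes \Phi$. Given $k$-EB maps $\Phi_1, \Phi_2$ and scalars $\alpha, \beta \geq 0$, each $\alpha\Phi_1 + \beta\Phi_2$ is plainly $k$-positive, and for any $X \in (\M{k}\otimes\M{d_1})^+$ we have $(\id_k \otimes (\alpha\Phi_1 + \beta\Phi_2))(X) = \alpha\,(\id_k\otimes\Phi_1)(X) + \beta\,(\id_k\otimes\Phi_2)(X)$, a sum of separable matrices, hence separable; this gives convexity and closure under nonnegative scaling in one stroke. The containment $\mcl{EB}_k \subseteq \mcl{P}(d_1,d_2)$ needed for the definition is immediate since every $k$-positive map with $k\geq 1$ is in particular positive.

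The sandwich stability is where I would lean on the already-proved characterizations rather than argue from scratch. Part (ii) of Remark \ref{rmk-nEB} already records that $\Gamma_2 \circ \Phi$ and $\Phi \circ \Gamma_1$ are $k$-EB whenever $\Phi$ is $k$-EB, $\Gamma_2 \in \mcl{P}(d_2,m)$, and $\Gamma_1 \in \mcl{P}_k(m,d_1)$. Since every CP-map is in particular positive and $k$-positive, composing on the left by a CP-map $\Phi:\M{d_2}\to\M{d_2}$ and on the right by a CP-map $\Psi:\M{d_1}\to\M{d_1}$ both fall under this remark, so $\Phi\circ\Theta\circ\Psi \in \mcl{EB}_k(d_1,d_2)$ for $\Theta \in \mcl{EB}_k(d_1,d_2)$. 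Alternatively, this follows from Theorem \ref{thm-nEB-char}: using characterization \ref{nEB-CP}, for any CP-map $\Lambda:\M{k}\to\M{d_1}$ the map $(\Phi\circ\Theta\circ\Psi)\circ\Lambda = \Phi\circ(\Theta\circ(\Psi\circ\Lambda))$ is EB because $\Psi\circ\Lambda$ is CP, $\Theta\circ(\Psi\circ\Lambda)$ is EB, and post-composing an EB-map by any CP-map keeps it EB.

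The main obstacle, and the only step requiring genuine care, is \textbf{closedness}. Here I would observe that the $k$-positive maps form a closed cone (being defined by the closed condition $\id_k\otimes\Phi \geq 0$ on the finite-dimensional space of Hermitian inputs), and that the separability cone in $(\M{k}\otimes\M{d_2})^+$ is closed because it is the convex hull of the compact set of normalized product states $\{A\otimes B : \operatorname{tr}(A\otimes B)=1\}$ in a finite-dimensional space, hence compact up to scaling and closed as a cone. Concretely, if $\Phi_n \to \Phi$ with each $\Phi_n$ $k$-EB, then $\Phi$ is $k$-positive as a limit, and for fixed $X \in (\M{k}\otimes\M{d_1})^+$ the images $(\id_k\otimes\Phi_n)(X)$ are separable and converge to $(\id_k\otimes\Phi)(X)$, which therefore lies in the closed separability cone; so $\Phi$ is $k$-EB. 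I expect writing the closedness of the separable cone cleanly (invoking Carathéodory to bound the number of product terms, so that one works inside a compact set) to be the part demanding the most attention, though it is standard.
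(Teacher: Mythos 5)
Your proposal is correct and follows essentially the same route as the paper: the paper likewise treats $\mcl{EBCP}_k(d_1,d_2)$ as the intersection $\mcl{EB}_k(d_1,d_2)\cap\mcl{CP}(d_1,d_2)$ of two mapping cones, obtains the sandwich stability from Remark \ref{rmk-nEB}, and isolates closedness as the only nontrivial step. The sole (minor) difference is in that step: the paper takes $\Theta_n\to\Theta$, notes each $\Theta_n\circ\Psi$ is EB for every $\Psi\in\mcl{CP}(k,d_1)$ by Theorem \ref{thm-nEB-char} and invokes closedness of the EB cone, whereas you argue directly from the definition using closedness of the separable cone in $\M{k}\otimes\M{d_2}$ --- the same underlying fact, transported through the Choi--Jamiolkowski isomorphism.
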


\begin{proof}
 Suppose $\{\Theta_n\}_{n=1}^\infty$ is a sequence in $\mcl{EB}_k(d_1,d_2)$ such that $\Theta_n\to\Theta\in\mcl{P}(d_1,d_2)$. Given any $\Psi\in\mcl{CP}(k,d_1)$, by Theorem \ref{thm-nEB-char}, $\Theta_n\circ\Psi$ is EB for every $n\geq 1$, and hence so is $\Theta\circ\Psi=\lim_n\Theta_n\circ\Psi$. Thus $\Theta\in\mcl{EB}_k(d_1,d_2)$, and concludes that $\mcl{EB}_k(d_1,d_2)$ is closed. Further, from Remark \ref{rmk-nEB}, it follows that $\mcl{EB}_k(d_1,d_2)$ is a mapping cone.  Now, since $\mcl{CP}_k(d_1,d_2)$ is a mapping cone the intersection
 $$\mcl{EB}_k(d_1,d_2)\bigcap\mcl{CP}_k(d_1,d_2)=\mcl{EBCP}_k(d_1,d_2)$$ is also a mapping cone.
\end{proof}

 The smallest closed convex cone containing a set $\mcl{S}$ will be denoted by $conv(\mcl{S})$.
 
\begin{thm}\label{thm-nEB-dual-cone}
 Let $k\geq 1$. Then\footnote{The identity  \eqref{EB-k-dual-2} was proved by Alexander M\"{u}ller-Hermes and communicated to us privately after we upload the preprint in arXiv. This proof leads us to prove the identity \eqref{EB-k-dual-1}.}
 \begin{align}
    \mcl{EB}_k(d_1,d_2)^\circ
      &=conv\{\Gamma\circ\Psi: \Gamma\in\mcl{P}(d_1,d_2), \Psi\in\mcl{PEB}_k(d_1)\} \label{EB-k-dual-1}\\
      &=conv\{\Gamma\circ\Psi: \Gamma\in\mcl{P}(k,d_2), \Psi\in\mcl{CP}(d_1,k)\}.      \label{EB-k-dual-2}                                 
 \end{align}
\end{thm}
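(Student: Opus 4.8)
The plan is to prove both equalities by a double-inclusion argument, exploiting the duality machinery of Theorem \ref{thm-dual-cone} together with the characterizations of $k$-EB maps in Theorem \ref{thm-nEB-char}. The fundamental observation is that since $\mcl{EB}_k(d_1,d_2)$ is a mapping cone (Theorem \ref{thm-mapp-cone}), its bidual recovers it, i.e.\ $(\mcl{EB}_k(d_1,d_2)^\circ)^\circ=\mcl{EB}_k(d_1,d_2)$. I would therefore compute the dual cone directly by testing the defining inequality $\tr(C_\Gamma C_\Theta)\geq 0$ against all $\Theta\in\mcl{EB}_k(d_1,d_2)$, and match it to the two candidate convex cones.

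For the inclusion $\supseteq$ in \eqref{EB-k-dual-1}, I would first check that each generator $\Gamma\circ\Psi$ with $\Gamma\in\mcl{P}(d_1,d_2)$ and $\Psi\in\mcl{PEB}_k(d_1)$ lies in $\mcl{EB}_k(d_1,d_2)^\circ$. Fix $\Theta\in\mcl{EB}_k(d_1,d_2)$; I need $\tr(C_{\Gamma\circ\Psi}\,C_\Theta)\geq 0$. The natural route is to rewrite this trace via the adjoint identity $\tr(C_{\Gamma\circ\Psi}C_\Theta)=\tr(C_\Psi\,C_{\Gamma^*\circ\Theta})$ (the same manipulation already used in the proof of $\ref{pos-nEB-1}\Rightarrow\ref{nEB-tr}$ in Theorem \ref{thm-nEB-char}), and then invoke that $\Theta$ being $k$-EB forces $\Gamma^*\circ\Theta$ to behave well against $k$-PEB maps. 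Concretely, condition \ref{nEB-tr} of Theorem \ref{thm-nEB-char} states exactly that $\tr(C_\Theta\,C_{\Gamma\circ\Psi})\geq 0$ for all such $\Gamma$ and $\Psi$, so the generators lie in the dual cone; since the dual cone is a closed convex cone, their convex hull does too. The opposite inclusion $\subseteq$ is the bidual step: taking duals reverses inclusions and $conv\{\cdots\}^\circ$ must recover $\mcl{EB}_k(d_1,d_2)$, which follows once I verify that the dual of the right-hand cone is contained in $\mcl{EB}_k(d_1,d_2)$, again reading off condition \ref{nEB-tr}.

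For \eqref{EB-k-dual-2}, the key is to show the two generating sets produce the same closed convex cone. Any $\Psi\in\mcl{PEB}_k(d_1)$ factors, up to the freedom in its Choi--Kraus operators, through rank-$k$ maps, so I would factor a $k$-PEB map as $\Psi=\Psi_2\circ\Psi_1$ with $\Psi_1\in\mcl{CP}(d_1,k)$ and $\Psi_2\in\mcl{CP}(k,d_1)$ (this is precisely the content of $SN(\Psi)\le k$, realized via an intermediate $\mbb{C}^k$). Absorbing $\Psi_2$ into the positive map $\Gamma$ on the left — legitimate because $\Gamma\circ\Psi_2\in\mcl{P}(k,d_2)$ whenever $\Gamma\in\mcl{P}(d_1,d_2)$ — converts a generator $\Gamma\circ\Psi$ of the first cone into a generator $(\Gamma\circ\Psi_2)\circ\Psi_1$ of the second, giving $\eqref{EB-k-dual-1}\subseteq\eqref{EB-k-dual-2}$. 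For the reverse, given $\Gamma\in\mcl{P}(k,d_2)$ and $\Psi\in\mcl{CP}(d_1,k)$, I would interpret $\Gamma\circ\Psi$ directly: the composite $\Gamma\circ\Psi$ itself is of the form (positive)$\circ$($k$-PEB), since $\Psi\in\mcl{CP}(d_1,k)$ factors through $\mbb{C}^k$ and hence $SN(\Psi)\le k$ forces $\iota\circ\Psi\in\mcl{PEB}_k(d_1)$ where $\iota:\mbb{C}^k\to\mbb{C}^{d_1}$ is the inclusion. I expect the main obstacle to be this matching of factorizations cleanly — ensuring the dimensions and the inclusion/compression maps are inserted so that one genuinely lands back in $\mcl{PEB}_k(d_1)$ rather than merely in a superset — so I would handle it carefully using the singular-value/compression trick already deployed in the proof of $\ref{nEB-proj}\Rightarrow\ref{nEB-nPEB}$ in Theorem \ref{thm-nEB-char}.
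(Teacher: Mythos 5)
Your argument for \eqref{EB-k-dual-1} is essentially the paper's own proof: the generators lie in $\mcl{EB}_k(d_1,d_2)^\circ$ because Theorem \ref{thm-nEB-char}\ref{nEB-tr} says exactly that, and the reverse inclusion is the bidual step, where membership of a positive map $\Phi$ in $conv\{\Gamma\circ\Psi\}^\circ$ is verbatim condition \ref{nEB-tr} again, forcing $\Phi\in\mcl{EB}_k(d_1,d_2)$ (the paper routes this through \eqref{eq-P-dual} and condition \ref{nEB-nPEB}, but it is the same computation).

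For \eqref{EB-k-dual-2} you genuinely depart from the paper, which runs a second, independent dual computation: there the generators pair nonnegatively with any $k$-EB map $\Phi$ via $\tr(C_{\Gamma\circ\Psi}C_\Phi)=\tr\bigl(C_\Gamma(\id_k\otimes\Phi)(C_{\Psi^*})\bigr)\geq 0$, the point being separability of $(\id_k\otimes\Phi)(C_{\Psi^*})$, and the reverse inclusion is again biduality plus Theorem \ref{thm-nEB-char}\ref{nEB-CP}. Your plan --- show the two generating sets span the same closed convex cone, so that \eqref{EB-k-dual-2} follows from \eqref{EB-k-dual-1} --- is a legitimate alternative and needs only one duality argument, but the lemma you base it on is false as stated. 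Having $SN(\Psi)\leq k$ does \emph{not} mean that $\Psi\in\mcl{PEB}_k(d_1)$ factors as a single composition $\Psi_2\circ\Psi_1$ with $\Psi_1\in\mcl{CP}(d_1,k)$ and $\Psi_2\in\mcl{CP}(k,d_1)$: already for $k=1$ the EB-map $\Psi(X)=\tr(XE_{11})E_{11}+\tr(XE_{22})E_{22}$ on $\M{2}$ cannot factor through $\M{1}=\mbb{C}$, since any such composition has range spanned by a single positive matrix. What is true --- and is all you need, since a convex cone is closed under finite sums --- is the per-term statement: writing $\Psi=\sum_i Ad_{V_i}$ with $rank(V_i)\leq k$ and choosing a rank factorization $V_i=A_iB_i$ with $A_i\in\M{d_1\times k}$, $B_i\in\M{k\times d_1}$, one gets $Ad_{V_i}=Ad_{B_i}\circ Ad_{A_i}$, hence $\Gamma\circ\Psi=\sum_i(\Gamma\circ Ad_{B_i})\circ Ad_{A_i}$ is a finite sum of generators of the second set. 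The same per-term factorization applied to the Choi--Kraus operators $W_i\in\M{d_1\times k}$ of $\Psi\in\mcl{CP}(d_1,k)$ (factoring each $W_i$ through $\mbb{C}^{d_1}$, with inner factor of rank at most $\min\{d_1,k\}\leq k$) yields the reverse inclusion as well, and it does so uniformly in $k$, avoiding the inclusion/compression maps your sketch needs and which break down when $k>d_1$. With this repair your route closes, and it has the merit of making explicit that the two generating sets of Theorem \ref{thm-nEB-dual-cone} generate the same cone, something the paper's proof never exhibits directly.
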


\begin{proof}
 Let $\mcl{S}=\{\Gamma\circ\Psi: \Gamma\in\mcl{P}(d_1,d_2), \Psi\in\mcl{PEB}_k(d_1)\}$. By Theorem \ref{thm-nEB-char}\ref{nEB-tr}, we have $\mcl{S}\subseteq\mcl{EB}_k(d_1,d_2)^\circ$, and hence $conv(\mcl{S})\subseteq\mcl{EB}_k(d_1,d_2)^\circ$. Now to prove the reverse inclusion, it is enough to prove that $(conv(\mcl{S}))^\circ\subseteq(\mcl{EB}_k(d_1,d_2))^{\circ\circ}=\mcl{EB}_k(d_1,d_2)$. So let $\Phi\in(conv(\mcl{S}))^\circ$. Then   
 \begin{align*}
   \tr(C_{\Phi\circ\Psi} C_\Gamma)=\tr(C_\Phi C_{\Gamma\circ\Psi^*})\geq 0
 \end{align*}
 for all $\Gamma\in\mcl{P}(d_1,d_2)$ and $ \Psi\in\mcl{PEB}_k(d_1)$, hence $\Phi\circ\Psi\in\mcl{EB}(d_1,d_2)$. Since $\Psi$ is arbitrary, by Theorem \ref{thm-nEB-char}\ref{nEB-nPEB}, $\Phi\in\mcl{EB}_k(d_1,d_2)$. This completes the proof of \eqref{EB-k-dual-1}. Now to prove \eqref{EB-k-dual-2}, let 
 $$\mcl{C}_k(d_1,d_2):=conv\{\Gamma\circ\Psi: \Gamma\in\mcl{P}(k,d_2), \Psi\in\mcl{CP}(d_1,k)\}.$$
 To show that $\mcl{C}_k(d_1,d_2)\subseteq\mcl{EB}_k(d_1,d_2)^\circ$ note that
  \begin{align*}
     \tr(C_{\Gamma\circ\Psi}C_\Phi)=\tr\big(C_\Gamma(\id_k\otimes\Phi)(C_{\Psi^*})\big)\geq 0
  \end{align*} 
 for all $\Phi\in\mcl{EB}_k(d_1,d_2),\Gamma\in\mcl{P}(k,d_2)$ and $\Psi\in\mcl{CP}(d_1,k)$.  To show the reverse inequality it is enough to show that $\mcl{C}_k(d_1,d_2)^\circ\subseteq\mcl{EB}_k(d_1,d_2)$. To see this, consider $\Phi\in\mcl{C}_k(d_1,d_2)^\circ$. By definition, we know that 
 \begin{align*}
    \tr(C_{\Phi\circ\Psi} C_\Gamma)=\tr(C_\Phi C_{\Gamma\circ\Psi^*})\geq 0
 \end{align*} 
 for all $\Gamma\in\mcl{P}(k,d_2)$ and $\Psi\in\mcl{CP}(k,d_1)$. We conclude that $\Phi\circ\Psi\in\mcl{EB}(k,d_2)$ for every $\Psi\in\mcl{CP}(k,d_1)$, which is equivalent to $\Phi\in\mcl{EB}_k(d_1,d_2)$.
\end{proof}

\begin{rmk}
 Now, using \cite[Corollary 4]{Sto18}, we can compute the dual of $\mcl{EBCP}_k$ as $\mcl{EBCP}_k^\circ=(\mcl{EB}_k\cap\mcl{CP})^\circ=\mcl{EB}_k^\circ\vee\mcl{CP}^\circ$. Here, $\msc{C}_1\vee\msc{C}_2$ denotes the smallest closed convex cone containing the union of two mapping cones $\msc{C}_1$ and $\msc{C}_2$. Note that $\msc{C}_1\vee\msc{C}_2=\ol{\msc{C}_1+\msc{C}_2}$, where $\msc{C}_1+\msc{C}_2:=\{\Theta_1+\Theta_2 : \Theta_i\in\msc{C}_i,i=1,2\}$.   
\end{rmk}

 From Theorem \ref{thm-dual-cone} and Theorem  \ref{thm-nEB-char}\ref{nEB-nPEB-m}, it follows that $\mcl{PEB}_k(d_1,d_2)\subseteq\mcl{EB}_k(d_1,d_2)^\circ$ Thus we have the following: 
 \begin{align*}
 \begin{matrix}
    \mcl{C}^{\circ\circ}=\mcl{C}:&\mcl{P}(d_1,d_2)&\supseteq&\mcl{P}_k(d_1,d_2)&\supseteq&\mcl{EB}_k(d_1,d_2)&\supseteq&\mcl{EB}(d_1,d_2)\\
    &\Big\updownarrow&&\Big\updownarrow&&\Big\updownarrow&&\Big\updownarrow\\
    \mcl{C}^{\circ}:&\mcl{EB}(d_1,d_2)&\subseteq&\mcl{PEB}_k(d_1,d_2)&\subseteq&(\mcl{EB}_k(d_1,d_2))^\circ&\subseteq&\mcl{P}(d_1,d_2).
 \end{matrix}
\end{align*} 

\mbox{}\\
 
 We denote the set of all PPT-maps from $\M{d_1}$ into $\M{d_2}$ by $\mcl{PPT}(d_1,d_2)$. From Holevo form, it follows that every EB-map is a PPT-map. But the converse is not true in general.   In \cite{Hor97}, Horodecki proved that there exists $\Psi\in\mcl{PPT}(2,4)$ which is not EB (equivalently, not $2$-EB).   Now, given $d>4$ let $W=\Matrix{I_4&\large{\text{0}_{4\times(d-4)}}}\in\M{4\times d}$. Then the map $\Psi':=Ad_W\circ\Psi:\M{2}\to\M{d}$ is PPT but not EB since $Ad_{W*}\circ \Psi'=\Psi $ is not EB. Thus, for every $d\geq 4$ there is a PPT-map $\Psi':\M{2}\to\M{d}$ that is not EB.

\begin{prop}\label{prop-PPT-non2EB}
 Let $d\geq 4$. Then there exists a PPT map $\Phi:\M{d}\to\M{d}$ which is not $2$-EB. Moreover, we can choose $\Phi$ such that $SN(\Phi)=SN(\Phi\circ\T)=2$. 
\end{prop}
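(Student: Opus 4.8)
The plan is to build $\Phi$ by \emph{dilating} the map $\Psi:\M 2\to\M d$ produced in the paragraph preceding the statement, which is PPT but not EB, and hence not $2$-EB (for a map with domain $\M 2$, being EB is the same as being $2$-EB, since EB $=d_1$-EB). Let $V=\SMatrix{I_2\\ 0_{(d-2)\times 2}}\in\M{d\times 2}$, so that $Ad_V:\M d\to\M 2$ is the compression onto the top-left $2\times 2$ corner, and set $\Phi:=\Psi\circ Ad_V:\M d\to\M d$. The algebraic facts I would record first are that $V^*V=I_2$, so $Ad_V\circ Ad_{V^*}=\id_2$ (with $Ad_{V^*}:\M 2\to\M d$) and thus $\Phi\circ Ad_{V^*}=\Psi$; and, since $V$ is a real $\{0,1\}$-matrix, the commutation relation $Ad_V\circ\T=\T_2\circ Ad_V$ (where $\T_2$ is the transpose on $\M 2$), which is what makes the $\Phi\circ\T$ computation go through.

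First I would verify that $\Phi$ is PPT. Both $\Phi=\Psi\circ Ad_V$ and $\T\circ\Phi=(\T\circ\Psi)\circ Ad_V$ are compositions of CP-maps ($\T\circ\Psi$ is CP because $\Psi$ is co-CP), so $\Phi$ is both CP and co-CP. Next, to see that $\Phi$ is not $2$-EB, I would argue by contradiction: $Ad_{V^*}:\M 2\to\M d$ is a CP-map, so if $\Phi$ were $2$-EB then Theorem \ref{thm-nEB-char}\ref{nEB-CP} (with $k=2$) would force $\Phi\circ Ad_{V^*}=\Psi$ to be EB, contradicting the choice of $\Psi$.

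It then remains to pin down the Schmidt numbers. For the upper bounds I would use submultiplicativity $SN(\Theta_1\circ\Theta_2)\le\min\{SN(\Theta_1),SN(\Theta_2)\}$ together with $SN(Ad_V)=rank(V)=2$ and the bound $SN\le\min\{d_1,d_2\}$: since $\Psi$ and $\Psi\circ\T_2$ are CP-maps out of $\M 2$, both have Schmidt number at most $2$, and the factorizations $\Phi=\Psi\circ Ad_V$ and $\Phi\circ\T=(\Psi\circ\T_2)\circ Ad_V$ give $SN(\Phi),SN(\Phi\circ\T)\le 2$. For the lower bounds I would use that $SN=1$ is equivalent to being EB: $SN(\Phi)=1$ would make $\Phi$ EB, hence $2$-EB, contradicting the previous paragraph; and $SN(\Phi\circ\T)=1$ would make $\Phi\circ\T$ EB, so precomposing this EB-map with the CP-map $Ad_{V^*}$ (which preserves EB) together with $(\Phi\circ\T)\circ Ad_{V^*}=(\Psi\circ\T_2)\circ Ad_V\circ Ad_{V^*}=\Psi\circ\T_2$ would make $\Psi\circ\T_2$ EB, whence $\Psi=(\Psi\circ\T_2)\circ\T_2$ would be EB, again a contradiction. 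Thus $SN(\Phi)=SN(\Phi\circ\T)=2$.

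The genuinely substantive input is external --- the existence of the Horodecki-type PPT non-EB map $\Psi:\M 2\to\M d$, which is already supplied before the statement --- so the remainder is a routine dilation argument. The step needing the most care is the lower bound on $SN(\Phi\circ\T)$: it relies on the commutation $Ad_V\circ\T=\T_2\circ Ad_V$ (valid precisely because $V$ is real) and on the fact that EB is stable under composing with the transpose, i.e.\ $\Psi\circ\T_2$ is EB iff $\Psi$ is, which I would justify quickly from the Holevo form in Theorem \ref{thm-EBmap-char} (applying $\T_2$ sends $F_i\mapsto F_i^{\,T}\ge 0$, preserving the Holevo form).
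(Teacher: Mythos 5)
Your proposal is correct and follows essentially the same route as the paper: the same compression construction $\Phi=\Psi\circ Ad_V$ with $V=\SMatrix{I_2\\ 0_{(d-2)\times 2}}$, the same appeal to Theorem \ref{thm-nEB-char} via $\Phi\circ Ad_{V^*}=\Psi$ to rule out $2$-EB, and the same submultiplicativity bound $SN\le SN(Ad_V)=2$ with the lower bound coming from non-EB-ness. The only difference is that you carefully justify the step the paper leaves implicit --- that $\Phi\circ\T$ is not EB, via the commutation $Ad_V\circ\T=\T_2\circ Ad_V$ and stability of EB under composition with $\T$ --- which is a welcome filling-in of detail rather than a different argument.
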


\begin{proof}
 Choose $\Psi\in\mcl{PPT}(2,d)$ that is not EB. Let $V=\Matrix{I_2\\ \large{\text{0}}_{(d-2)\times 2}}\in\M{d\times 2}$. Then the map $\Phi:=\Psi\circ Ad_V$ defined on $\M{d}$ is PPT, and both $SN(\Phi)$ and $SN(\Phi\circ\T)=SN(\T\circ\Phi)$ are less than or equal to $SN(Ad_V)=2$. Since $\Phi\circ Ad_{V*}=\Psi$ is not an EB-map, by Theorem \ref{thm-nEB-char}, $\Phi$ is not $2$-EB. Observe that $\Phi$ and $\Phi\circ\T$ has Schmidt number strictly greater than one as they are not EB. This completes the proof.
\end{proof}

 From the above discussion, we saw that for all $d\geq 4$ there exists a $\Psi\in\mcl{PPT}(d,2)$ which is not EB. The following proposition, which is a  generalization of  \cite[Theorem 3]{HHH96}, says that such a map will always be $3$-EB. 
 
\begin{prop}\label{prop-PPT-nEB}
 Let $d\geq 1$.
 \begin{enumerate}[label=(\roman*)]
    \item If $\Phi:\M{d}\to\M{2}$ is a PPT-map, then $\Phi$ is $3$-EB.
    \item If  $\Phi:\M{d}\to\M{3}$ is a PPT-map, then $\Phi$ is $2$-EB. 
 \end{enumerate}
\end{prop}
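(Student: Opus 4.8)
The plan is to reduce both statements to the classical Horodecki result (\cite[Theorem 3]{HHH96}), which asserts that every positive map $\Gamma:\M{2}\to\M{2}$, $\M{2}\to\M{3}$, or $\M{3}\to\M{2}$ is decomposable, i.e.\ of the form $\Gamma_1+\Gamma_2\circ\T$ with $\Gamma_1,\Gamma_2$ CP. The natural tool to exploit is the characterization in Theorem \ref{thm-nEB-char}\ref{pos-nEB-1}: a $k$-positive map $\Phi:\M{d_1}\to\M{d_2}$ is $k$-EB if and only if $\Gamma\circ\Phi$ is $k$-positive for every positive map $\Gamma:\M{d_2}\to\M{d_1}$. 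Since a PPT-map is automatically CP (hence $k$-positive for all $k$), the hypothesis gives us a starting point; the work is to upgrade $k$-positivity of $\Gamma\circ\Phi$ from the PPT property of $\Phi$.

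For part (i), with $\Phi:\M{d}\to\M{2}$ PPT and $k=3$, I would take an arbitrary positive map $\Gamma:\M{2}\to\M{d}$ and try to show $\Gamma\circ\Phi$ is $3$-positive. The key observation is that positivity of $\Gamma\circ\Phi$ as a $3$-positive condition can be tested against inputs of Schmidt number at most $3$ via Lemma \ref{lem-k-pos}\ref{nEB-SN-m}, but the cleaner route is to use decomposability \emph{on the side of $\Gamma$}: I do not get to decompose $\Gamma:\M{2}\to\M{d}$ since its target dimension is large, so instead I would dualize. By Theorem \ref{thm-dual-cone} applied with the mapping cone $\mcl{P}_k$, being $k$-EB is equivalent to a positivity pairing against $k$-PEB maps. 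I would therefore use the equivalent form \ref{nEB-tr}: $\Phi$ is $3$-EB iff $\tr(C_\Phi C_{\Gamma\circ\Psi})\geq 0$ for all $\Psi\in\mcl{PEB}_3(d)$ and all positive $\Gamma:\M{d}\to\M{2}$. Here $\Gamma:\M{d}\to\M{2}$ \emph{is} decomposable by Horodecki, writing $\Gamma=\Gamma_1+\Gamma_2\circ\T$ with $\Gamma_1,\Gamma_2$ CP. Then $\Gamma\circ\Psi=\Gamma_1\circ\Psi+\Gamma_2\circ(\T\circ\Psi)$, and since composition of CP-maps is CP while $\tr(C_\Phi C_{\text{CP}})\geq 0$ holds whenever $\Phi$ is CP, the $\Gamma_1$ term is nonnegative because $\Phi$ is CP. For the $\Gamma_2\circ\T\circ\Psi$ term I would use that $\Phi$ is co-CP: pairing $C_\Phi$ against the Choi matrix of a co-CP composition reduces, after a transpose, to pairing $C_{\T\circ\Phi}$ (which is positive) against a CP Choi matrix, again nonnegative. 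Summing the two nonnegative contributions gives \ref{nEB-tr}, hence $\Phi$ is $3$-EB.

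For part (ii), with $\Phi:\M{d}\to\M{3}$ PPT and $k=2$, the argument is structurally identical but now the relevant positive maps are $\Gamma:\M{3}\to\M{d}$ on one side and, in the dual pairing of \ref{nEB-tr}, positive maps $\Gamma:\M{d}\to\M{3}$. The dimension pair $(\cdot,3)$ again falls under Horodecki decomposability only when the other factor is $2$ or $3$; the crucial point is that after composing with a $2$-PEB map $\Psi$, everything effectively factors through $\M{2}$. Concretely, a $2$-PEB map $\Psi:\M{d}\to\M{d}$ has Choi–Kraus operators of rank at most $2$, so $\Psi=\sum_i Ad_{V_i}$ with $\mathrm{rank}(V_i)\leq 2$; each $Ad_{V_i}$ factors as $Ad_{V_i}=Ad_{W_i}\circ Ad_{P_i}$ through a rank-$2$ projection, i.e.\ through $\M{2}$. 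Then $\Gamma\circ\Psi$ restricted through these rank-$2$ compressions becomes a positive map out of $\M{2}$, which \emph{is} decomposable by Horodecki. I would decompose each such compressed piece as CP plus CP-composed-with-transpose and conclude nonnegativity of the trace pairing exactly as in part (i), using separately that $\Phi$ is CP and that $\Phi$ is co-CP.

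The main obstacle I anticipate is the bookkeeping in reducing the pairing $\tr(C_\Phi C_{\Gamma\circ\Psi})$ to a genuine positivity statement on $\M{2}$ (part ii) or $\M{3}\to\M{2}$ maps (part i), and in particular verifying that the transpose appearing in the decomposable piece $\Gamma_2\circ\T$ interacts correctly with the co-CP property of $\Phi$ rather than its CP property. One must track which transpose acts on which tensor factor, since $\tr(C_\Phi C_{\Gamma\circ\Psi})$ involves a partial transpose identity of the form $\tr(C_\Phi\, C_{\Gamma_2\circ\T\circ\Psi})=\tr(C_{\T\circ\Phi}\,C_{\Gamma_2\circ\Psi})$; getting this transpose-transfer identity right, and confirming $C_{\T\circ\Phi}\geq 0$ is exactly the co-CP hypothesis, is where the argument must be handled carefully. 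Once that identity is in place, both parts follow by the same two-term splitting, and the restriction to $k=3$ (target $\M{2}$) and $k=2$ (target $\M{3}$) is dictated precisely by the dimensions for which Horodecki decomposability is available.
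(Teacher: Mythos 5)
Your part (i) rests on a false claim: it is \emph{not} true that every positive map $\Gamma:\M{d}\to\M{2}$ is decomposable. The St\o rmer--Woronowicz decomposability theorems (of which \cite[Theorem 3]{HHH96} is the state-side formulation) cover only the dimension pairs $(2,2)$, $(2,3)$ and $(3,2)$; for $(d,2)$ with $d\geq 4$ non-decomposable positive maps exist. In fact the failure is dual to exactly the phenomenon this paper exploits elsewhere: if every $\Gamma\in\mcl{P}(d,2)$ were decomposable, then for any PPT map $\Phi:\M{d}\to\M{2}$ one would get $\tr(C_\Phi C_\Gamma)=\tr(C_\Phi C_{\Gamma_1})+\tr(C_{\Phi\circ\T}C_{\Gamma_2})\geq 0$ for all positive $\Gamma$, i.e.\ $\Phi\in\mcl{P}(d,2)^\circ=\mcl{EB}(d,2)$ --- so \emph{every} PPT map into $\M{2}$ would be entanglement breaking. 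That contradicts the adjoint of Horodecki's map $\Psi\in\mcl{PPT}(2,4)$ which is not EB (\cite{Hor97}, used in Proposition \ref{prop-PPT-non2EB}). The same computation exposes the structural red flag in your part (i): your argument never uses that $\Psi$ has Choi--Kraus rank at most $3$, only that it is CP; since the conclusion is false without the rank restriction, no correct proof can avoid using it, and yours does not.

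The repair is the mechanism you already deploy in part (ii), and it should be used in part (i) as well: do not decompose $\Gamma$, decompose $\Gamma\circ\Psi$. Writing $\Psi=\sum_i Ad_{V_i}$ with $\mathrm{rank}(V_i)\leq 3$ and $V_i=A_iB_i$, $A_i\in\M{d\times 3}$, $B_i\in\M{3\times d}$, one has $Ad_{V_i}=Ad_{B_i}\circ Ad_{A_i}$, so $\Gamma\circ Ad_{V_i}=(\Gamma\circ Ad_{B_i})\circ Ad_{A_i}$ where $\Gamma\circ Ad_{B_i}\in\mcl{P}(3,2)$ genuinely is decomposable; since $\T\circ Ad_{A_i}=Ad_{\ol{A_i}}\circ\T$, each piece and hence $\Gamma\circ\Psi$ is CP plus co-CP, and the pairing in Theorem \ref{thm-nEB-char}\ref{nEB-tr} splits into two nonnegative terms. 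One caveat on the bookkeeping you flagged: the identity must be applied with the transpose outermost, $\tr(C_\Phi C_{\Theta\circ\T})=\tr(C_{\Phi\circ\T}C_\Theta)$ with $\Theta$ CP (rewrite $\Gamma_2\circ\T\circ\Psi=(\Gamma_2\circ\T\circ\Psi\circ\T)\circ\T$); your middle-transpose version $\tr(C_\Phi C_{\Gamma_2\circ\T\circ\Psi})=\tr(C_{\T\circ\Phi}C_{\Gamma_2\circ\Psi})$ is not a valid identity in general. Finally, it is worth comparing with the paper's proof, which avoids dualization entirely: by Theorem \ref{thm-nEB-char}\ref{nEB-CP} it suffices that $\Phi\circ\Psi$ be EB for every $\Psi\in\mcl{CP}(3,d)$, and $\Phi\circ\Psi\in\mcl{PPT}(3,2)$ is EB directly by \cite[Theorem 3]{HHH96}. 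That route needs no trace pairing, no Kraus-rank factorization, and no map-decomposability theorem at all.
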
 

\begin{proof}
 $(i)$ Let $\Psi\in\mcl{CP}(3,d)$. Then $\Phi\circ\Psi\in\mcl{PPT}(3,2)$, and hence by \cite[Theorem 3]{HHH96}, it is EB. Since $\Psi$ is arbitrary, from Theorem \ref{thm-nEB-char}, it follows that $\Phi$ is $3$-EB.\\
 $(ii)$ Follows from the fact that $\Phi\circ\Psi\in\mcl{PPT}(2,3)$, and hence EB for every $\Psi\in\mcl{CP}(2,d)$. 
\end{proof}

 A mapping cone $\msc{C}\subseteq\mcl{P}(d):=\mcl{P}(d,d)$ is said to be \emph{symmetric} (\cite{Sto11}) if $\Theta^*,\T\circ\Theta\circ\T\in \msc{C}$ for all $\Theta\in \msc{C}$. If $\msc{C}$ is a symmetric mapping cone, then $\msc{C}^\circ$ is also symmetric. The mapping cones  $\mcl{P}(d),\mcl{P}_k(d),\mcl{CP}(d),\mcl{PEB}_k(d),\mcl{EB}(d)$, where $k>1$, are known to be  symmetric. The following theorem says that, in general, the mapping cone $\mcl{EB}_k(d)$  (and hence $\mcl{EBCP}_k(d)$) is  not symmetric for $d\geq 4$.  

\begin{thm}\label{thm-kEB-nonsym}
 Let $d\geq 4$.
 \begin{enumerate}[label=(\roman*)]
    \item There exists a $3$-EB map $\Phi:\M{d}\to\M{2}$ such that $\Phi^*$ is not $2$-EB (and hence not $3$-EB).
    \item There exists a $3$-EB map $\Phi:\M{d}\to\M{d}$ such that $\Phi^*$ is not $2$-EB (and hence not $3$-EB).
 \end{enumerate}
\end{thm}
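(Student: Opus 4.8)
The plan is to exploit the asymmetry of PPT-maps with respect to the source and target dimensions established in Proposition \ref{prop-PPT-nEB}, combined with the dimension-reducing trick already used in Proposition \ref{prop-PPT-non2EB}. The key observation is that Proposition \ref{prop-PPT-nEB} treats the cases $\M{d}\to\M{2}$ and $\M{d}\to\M{3}$ differently, and that the dual operation $\Phi\mapsto\Phi^*$ interchanges source and target. So to produce a $3$-EB map whose dual fails to be $2$-EB, I would start with a single positive (in fact PPT) map that is \emph{not} $2$-EB, and arrange matters so that the map itself lands in a $\mcl{P}(\cdot,2)$-type situation where Proposition \ref{prop-PPT-nEB}(i) guarantees $3$-EB, while its dual sits in the problematic $\M{2}\to\M{d}$ direction where non-EB persists.

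For part (i): First I would invoke the discussion preceding Proposition \ref{prop-PPT-non2EB} to fix a PPT-map $\Psi:\M{2}\to\M{d}$ that is not EB (equivalently not $2$-EB), which exists for every $d\geq 4$. Its dual $\Psi^*:\M{d}\to\M{2}$ is then also PPT (since $SN$ and the PPT property are preserved under taking duals, using $SN(\Phi)=SN(\Phi^*)$ and that $\T\circ\Phi^*\circ\T=(\T\circ\Phi\circ\T)^*$), so by Proposition \ref{prop-PPT-nEB}(i), $\Psi^*$ is $3$-EB. Setting $\Phi:=\Psi^*$, we have a $3$-EB map $\Phi:\M{d}\to\M{2}$, while $\Phi^*=\Psi$ is not $2$-EB. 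Because a map that fails to be $2$-EB necessarily fails to be $3$-EB as well (by Remark \ref{rmk-nEB}(i), lowering $k$ preserves the $k$-EB property, so failing at $k=2$ rules out $k=3$), this gives the claim.

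For part (ii): Here I would compose with an isometric embedding to push the target dimension up from $2$ to $d$ without destroying the relevant properties, mirroring the construction in Proposition \ref{prop-PPT-non2EB}. Take the $\Phi:\M{d}\to\M{2}$ from part (i) and let $W=\SMatrix{I_2\\ \text{0}_{(d-2)\times 2}}\in\M{d\times 2}$, so that $Ad_W:\M{2}\to\M{d}$ is a CP-map of Schmidt number one. Set $\wtilde{\Phi}:=Ad_W\circ\Phi:\M{d}\to\M{d}$. Since $\Phi$ is $3$-EB and $Ad_W$ is an EB-map (indeed CP with rank-one Kraus operators), Remark \ref{rmk-nEB}(ii) shows $\wtilde{\Phi}=Ad_W\circ\Phi$ remains $3$-EB. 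For the dual, $\wtilde{\Phi}^*=\Phi^*\circ Ad_{W^*}=\Phi^*\circ Ad_{W}^*$, and I would argue that $\wtilde{\Phi}^*$ is not $2$-EB by recovering the original non-$2$-EB map: composing on the appropriate side with the adjoint inclusion (as in Proposition \ref{prop-PPT-non2EB}, where $\Phi\circ Ad_{V^*}=\Psi$) returns $\Phi^*=\Psi$, which is not $2$-EB, and since $2$-EB is preserved under such compositions by Remark \ref{rmk-nEB}(ii), $\wtilde{\Phi}^*$ cannot be $2$-EB either.

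The main obstacle I anticipate is the careful bookkeeping of how the dual interacts with one-sided composition and with the transpose, specifically verifying the identity $(\Gamma_2\circ\Theta\circ\Gamma_1)^*=\Gamma_1^*\circ\Theta^*\circ\Gamma_2^*$ together with the fact that $Ad_W^*=Ad_{W^*}$, and then confirming that the compression of a non-$2$-EB map to its support genuinely \emph{reproduces} a non-$2$-EB map rather than accidentally becoming separable. The cleanest way to handle this is to reduce everything to Theorem \ref{thm-nEB-char}: non-$2$-EB-ness of $\wtilde{\Phi}^*$ is equivalent to the existence of a CP-map $\Psi':\M{2}\to\M{d}$ with $\wtilde{\Phi}^*\circ\Psi'$ not EB, and I would produce such a $\Psi'$ explicitly from the embedding data so that $\wtilde{\Phi}^*\circ\Psi'$ recovers the original non-EB witness.
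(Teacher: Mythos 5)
Your proposal is correct and follows essentially the same route as the paper: part (i) is the paper's argument verbatim (dualize a non-EB PPT map $\M{2}\to\M{d}$, note the dual is again PPT, and apply Proposition \ref{prop-PPT-nEB}(i)), and part (ii) is the paper's embedding trick — left-compose with the inclusion $\iota:\M{2}\to\M{d}$ and recover the non-$2$-EB dual via $\wtilde{\Phi}^*\circ\iota=\Phi^*\circ\iota^*\circ\iota=\Phi^*$.

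Two slips in part (ii) should be corrected, though neither sinks the argument. First, under the paper's convention ($Ad_V:\M{d_1}\to\M{d_2}$ for $V\in\M{d_1\times d_2}$), your $W\in\M{d\times 2}$ gives $Ad_W:\M{d}\to\M{2}$ (a compression), not $\M{2}\to\M{d}$; the inclusion you intend is $Ad_{W^*}$. Second, and more substantively, the inclusion $\M{2}\to\M{d}$ is \emph{not} an EB-map and does not have rank-one Kraus operators: its single Kraus operator is the isometry $W$, which has rank $2$, so its Schmidt number is $2$. In fact, if the inclusion were EB, then $\wtilde{\Phi}=\iota\circ\Phi$ would be EB and hence so would $\wtilde{\Phi}^*$, contradicting exactly what you set out to prove. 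Fortunately the citation of Remark \ref{rmk-nEB}(ii) is what actually does the work: left composition with any \emph{positive} map preserves the $k$-EB property, and the inclusion is certainly positive (indeed CP). With that justification substituted for the false parenthetical, your proof is complete and coincides with the paper's.
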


\begin{proof}
 $(i)$ Let $\Phi:\M{d}\to\M{2}$ be a linear map such that $\Phi^*$ is PPT-map but not EB.  Since $\Phi$ is PPT, by  Proposition \ref{prop-PPT-nEB},  $\Phi$ is $3$-EB.   But $\Phi^*:\M{2}\to\M{d}$ is not $2$-EB (equivalently not EB).\\
 $(ii)$ Let $\Phi'\in\mcl{EB}_3(d,2)$ be such that $\Phi'^*$ is not $2$-EB.  Take $\Phi=\iota\circ\Phi'$, where $\iota:\M{2}\to\M{d}$ is the inclusion map. Observe that $\Phi\in\mcl{EB}_3(d)$. Now, if $\Phi^*$ is $2$-EB, then $\Phi^*\circ \iota=\Phi'^*\circ\iota^*\circ\iota=\Phi'^*$ is also $2$-EB, which is not possible. Hence $\Phi^*$ is not $2$-EB.
\end{proof}

\begin{rmk}
 From Remark \ref{rmk-nEB} it follows that $\mcl{EB}_k(d_1,d_2)$ is a left-invariant mapping cone in the sense that it is invariant under composition by positive  maps  from the  left side. However,   $\mcl{EB}_k(d_1,d_2)$ is not right-invariant in general. If $\mcl{EB}_k(d_1,d_2)$ is right-invariant, then from Theorem \ref{thm-nEB-char} \ref{pos-nEB} it follows that the  adjoint of a $k$-EB map is always $k$-EB which is not true in general, as seen in the above theorem.
\end{rmk}

\begin{rmk}
 We know that every  $d_1$-EB map $\Phi:\M{d_1}\to\M{d_2}$ is EB. However, from the above Theorem, we observe that a $d_2$-EB map $\Phi:\M{d_1}\to\M{d_2}$ is not necessarily  an entanglement breaking map.
\end{rmk}

Let $\msc{C},\msc{C}_1,\msc{C}_2$ be mapping cones of $\mcl{P}(d_1,d_2)$. If $\msc{C}$ is any of $\mcl{P}, \mcl{P}_k, \mcl{CP}, \mcl{PEB}_k, \mcl{EB}$, then $$\msc{C}\circ \T:=\{\Phi\circ \T : \Phi\in \msc{C}\}$$ is a mapping cone.  A mapping cone arises from $\mcl{P}, \mcl{P}_k, \mcl{CP}, \mcl{PEB}_k, \mcl{EB}$, via the operations $\msc{C}\mapsto \msc{C}\circ \T, (\msc{C}_1,\msc{C}_2)\mapsto \msc{C}_1\cap \msc{C}_2$, or $(\msc{C}_1,\msc{C}_2) \mapsto \msc{C}_1\vee \msc{C}_2$ is called \emph{typical}, otherwise called \emph{untypical} (\cite{Sko11}).   In \cite{JSS13}, the authors discussed  examples and properties of untypical mapping cones on $\M{d}$, emphasizing the case $d=2$. Observe that typical mapping cones are symmetric. By Theorem \ref{thm-kEB-nonsym}, $\mcl{EB}_k(d)$ and $\mcl{EBCP}_k(d), k=2,3$ are not symmetric for $d\geq 4$, hence not typical. Actually much more is true, as the following theorem shows. 

\begin{thm}\label{thm-nEB-untypical}
Let $d\geq 4$ and $1<k<d$. Then the mapping cones $\mcl{EB}_k(d)$  and $\mcl{EBCP}_k(d)$ are not typical. 
\end{thm}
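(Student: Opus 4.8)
The plan is to use the observation recorded just before the statement that every typical mapping cone is symmetric; it therefore suffices to prove that $\mcl{EB}_k(d)$ and $\mcl{EBCP}_k(d)$ fail to be symmetric for all $d\ge 4$ and $1<k<d$. Symmetry of a cone $\msc{C}\subseteq\mcl{P}(d)$ requires both $\T\circ\Theta\circ\T\in\msc{C}$ and $\Theta^*\in\msc{C}$ for every $\Theta\in\msc{C}$. By Remark \ref{rmk-nEB-T} the first closure always holds for $\mcl{EB}_k(d)$, and since $\T$ preserves complete positivity (if $\Theta=\sum Ad_{V_i}$ then $\T\circ\Theta\circ\T=\sum Ad_{\bar V_i}$ is CP) it also holds for $\mcl{EBCP}_k(d)$. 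Hence the entire problem reduces to producing a \emph{single} CP-map $\Phi\in\mcl{EBCP}_k(d)$ whose adjoint $\Phi^*$ is not $k$-EB: taking $\Phi$ to be CP settles both cones simultaneously, because complete positivity is preserved under the adjoint, so $\Phi^*\notin\mcl{EBCP}_k(d)$ is equivalent to $\Phi^*$ not being $k$-EB, and a fortiori $\Phi^*\notin\mcl{EB}_k(d)$.

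Next I would record a dimension-reduction step so the witness is built only once, on the smallest convenient algebra. Suppose $k<d_0\le d$ and $\Phi_0\in\mcl{EBCP}_k(d_0)$ has $\Phi_0^*\notin\mcl{EB}_k(d_0)$. Let $V\in\M{d\times d_0}$ be the inclusion isometry ($V^*V=I_{d_0}$) and set $\Phi:=Ad_{V^*}\circ\Phi_0\circ Ad_V$ on $\M d$. Since $Ad_V$ is CP (hence $k$-positive) and $Ad_{V^*}$ is CP, Remark \ref{rmk-nEB} gives $\Phi\in\mcl{EBCP}_k(d)$. Moreover $\Phi^*=Ad_{V^*}\circ\Phi_0^*\circ Ad_V$, and because $Ad_V\circ Ad_{V^*}=\id_{d_0}$ one recovers $\Phi_0^*=Ad_V\circ\Phi^*\circ Ad_{V^*}$; were $\Phi^*$ $k$-EB, Remark \ref{rmk-nEB} would force $\Phi_0^*$ to be $k$-EB, a contradiction. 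Thus it is enough to construct the seed $\Phi_0$ for each $k$ on some $\M{d_0}$ with $d_0>k$.

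For $k=2,3$ the witness is already delivered by Theorem \ref{thm-kEB-nonsym}(ii): its map $\Phi$ is $3$-EB (hence $k$-EB) with $\Phi^*$ not $2$-EB (hence not $k$-EB), and it is CP, being an inclusion composed with a PPT map $\M d\to\M 2$. The substance of the present theorem is therefore the range $k\ge 4$, and this is exactly where the main obstacle sits. The PPT-into-$\M2$ (or $\M3$) seeds \emph{cannot} be reused: a PPT map $\M{d_0}\to\M b$ is guaranteed $k$-EB only when every PPT state in $\M k\otimes\M b$ is separable, i.e.\ when $kb\le 6$, which by Proposition \ref{prop-PPT-nEB} caps the attainable index at $k=3$. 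What is required instead is, for each $k\ge4$, a genuinely index-$k$ entanglement breaking CP-map $\Phi_0$ on some $\M{d_0}$, $d_0>k$, that is not itself entanglement breaking and whose adjoint fails to be $k$-EB; in particular the $k$-EB-ness can no longer come for free from the PPT property.

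To build $\Phi_0$ I would start from a $k$-positive CP-map whose action separates every Schmidt-rank-$\le k$ input, so that $\Phi_0\in\mcl{EBCP}_k(d_0)$ by Theorem \ref{thm-nEB-char}\ref{nEB-SN-m}, while arranging that the flip-and-transpose relation carrying the Choi matrix $C_{\Phi_0}$ to $C_{\Phi_0^*}$ turns one principal $(k\times k)$-block entangled. By Theorem \ref{thm-kEB-Choi-block}, the existence of a single entangled principal $(k\times k)$-block of the Choi matrix of $\Phi_0^*$ precludes $\Phi_0^*$ from being $k$-EB, which is precisely the asymmetry sought. Concretely the seed is to be extracted from a PPT-entangled positive matrix in $\M k\otimes\M{d_0}$ via the sufficient criterion for $k$-entanglement breaking of trace-preserving positive maps developed elsewhere in the article; the delicate point, and the crux of the whole argument, is verifying that $\Phi_0$ is $k$-EB while its adjoint acquires an entangled $k\times k$-block. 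Once such a $\Phi_0$ is available for every $k\ge4$, the reduction of the first two paragraphs upgrades it to a witness on $\M d$ for every $d>k$, and the non-symmetry, hence non-typicality, of both $\mcl{EB}_k(d)$ and $\mcl{EBCP}_k(d)$ follows for all $d\ge4$ and $1<k<d$.
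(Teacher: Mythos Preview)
Your approach has a genuine gap for $k\ge 4$. You correctly observe that non-symmetry would imply non-typicality, and that Theorem~\ref{thm-kEB-nonsym} supplies non-symmetry only for $k=2,3$. But for $k\ge 4$ you do not actually construct the promised seed $\Phi_0$: the final paragraph is a plan (``the seed is to be extracted\ldots the delicate point, and the crux of the whole argument, is verifying\ldots'') rather than a construction, and nothing in the paper guarantees that such a $\Phi_0$ exists. Indeed, whether $\mcl{EB}_k(d)$ is non-symmetric for $k\ge 4$ is not established anywhere in the article, so your route may be substantially harder than, or even independent of, the theorem you are trying to prove.

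The paper avoids symmetry altogether. Its argument runs through Proposition~\ref{prop-PPT-non2EB}, which produces a map $\Phi\in\mcl{PEB}_2(d)\cap(\mcl{PEB}_2(d)\circ\T)$ that is \emph{not} $2$-EB; since $k$-EB implies $2$-EB for $k\ge 2$, this single $\Phi$ lies outside $\mcl{EB}_k(d)$ and $\mcl{EBCP}_k(d)$ for \emph{every} $1<k<d$. The conclusion then follows from \cite[Lemma~12]{JSS13}, which gives a structural criterion: any typical mapping cone containing $\mcl{EB}(d)$ must contain $\mcl{PEB}_2(d)\cap(\mcl{PEB}_2(d)\circ\T)$. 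Thus one witness map, together with the cited lemma, handles all $k$ uniformly with no need to establish non-symmetry beyond $k=3$. This is both shorter and logically cleaner than attempting an index-by-index asymmetry construction.
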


\begin{proof}
 By Proposition \ref{prop-PPT-non2EB}, there exists a map $\Phi\in\mcl{PEB}_2(d)\cap\big(\mcl{PEB}_2(d)\circ\T\big)$ which is not $2$-EB, hence $\Phi\notin\mcl{EB}_k(d)$ and $\Phi\notin\mcl{EBCP}_k(d)$. Now, from \cite[Lemma 12]{JSS13}, it follows that $\mcl{EB}_k(d)$ and $\mcl{EBCP}_k(d)$ are not typical.
\end{proof} 
 
 Recall that a positive map is said to be \emph{decomposable} if it can be written as a sum of a CP-map and a co-CP-map. Characterization of  CP-maps and PPT-maps in terms of decomposable maps is known in the literature. See \cite[Theorem]{PiMo07}, \cite[Proposition 8]{Sto18}. Next, we establish a similar characterization for entanglement breaking maps.  

\begin{thm}
 Let $\Phi:\M{d_1}\to\M{d_2}$ be a $k$-entanglement breaking map, where $d_1,k> 1$. Then the following conditions are equivalent: 
 \begin{enumerate}[label=(\roman*)]
    \item $\Phi$ is entanglement breaking.
    \item $\id_n\otimes\Gamma\circ\Phi$ is decomposable  for all positive maps $\Gamma:\M{d_2}\to\M{m}$, where $m,n\geq 1$.
    \item $\id_k\otimes\Gamma\circ\Phi$ is decomposable for all positive maps $\Gamma:\M{d_2}\to\M{d_1}$. 
 \end{enumerate} 
\end{thm}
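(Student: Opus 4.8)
The plan is to prove the cyclic chain of implications $(i) \Rightarrow (ii) \Rightarrow (iii) \Rightarrow (i)$. The implication $(i) \Rightarrow (ii)$ should be the easy direction: if $\Phi$ is entanglement breaking, then by Theorem~\ref{thm-EBmap-char} the composition $\Gamma \circ \Phi$ is CP for every positive map $\Gamma$, and a CP-map is in particular decomposable (it is its own CP-part with zero co-CP-part). Since the tensor product of CP-maps is CP, $\id_n \otimes (\Gamma \circ \Phi)$ is CP, hence decomposable. The implication $(ii) \Rightarrow (iii)$ is a trivial specialization, taking $n = k$ and $m = d_1$.

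The substance lies in $(iii) \Rightarrow (i)$, which is where I expect the main obstacle. Here I must exploit that $\Phi$ is already assumed $k$-entanglement breaking, and upgrade this to full entanglement breaking using the decomposability hypothesis. My strategy is to show $\Phi$ is $d_1$-EB, which by the remark following the $k$-EB definition is equivalent to EB. A $d_1$-EB conclusion would follow if I can show $(\id_{d_1} \otimes \Phi)(X)$ is separable for all positive $X$; by Theorem~\ref{thm-nEB-char}\ref{nEB-SN}, since $\Phi$ is $k$-EB I already control the Schmidt number on inputs $X$ with $SN(X) \leq k$. The hypothesis in $(iii)$ about decomposability of $\id_k \otimes \Gamma \circ \Phi$ must be translated into a PPT-type / positivity condition that, combined with $k$-entanglement breaking, forces separability. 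Concretely, I would use that a map is decomposable if and only if its Choi matrix is a sum of a positive matrix and a positive-partial-transpose contribution, and pair this against the witness structure: by Theorem~\ref{thm-nEB-dual-cone}, membership in $\mcl{EB}(d_1,d_2) = \mcl{EB}_{d_1}(d_1,d_2)$ is detected by the dual cone generated by maps of the form $\Gamma \circ \Psi$ with $\Gamma$ positive and $\Psi$ a $d_1$-PEB (equivalently arbitrary CP) map.

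The concrete route I favor is a \emph{separating-hyperplane / dual-cone} argument. Suppose for contradiction that $\Phi$ is $k$-EB but not EB. Then $\Phi \notin \mcl{EB}(d_1,d_2) = \mcl{EB}_{d_1}(d_1,d_2)$, so by the duality $\mcl{P}(d_1,d_2)^\circ = \mcl{EB}(d_1,d_2)$ from \eqref{eq-P-dual} there exists a positive map $\Gamma : \M{d_2} \to \M{d_1}$ (absorbing a $d_1$-PEB factor via Theorem~\ref{thm-nEB-char}\ref{nEB-tr} and Theorem~\ref{thm-nEB-dual-cone}) witnessing $\tr(C_\Phi\, C_{\Gamma \circ \Psi}) < 0$ for some CP-map $\Psi$, equivalently $\Gamma \circ \Phi$ fails to be $d_1$-positive, i.e.\ fails to be CP. The goal is then to show this failure of complete positivity is incompatible with the assumption that $\id_k \otimes (\Gamma \circ \Phi)$ is decomposable once we know $\Phi$ is $k$-EB. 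The key computation is that for a $k$-EB map $\Phi$ the composite $\Gamma \circ \Phi$, when amplified by $\id_k$, has a Choi matrix whose principal $k \times k$ block structure is controlled by Theorem~\ref{thm-kEB-Choi-block}: the $k$-EB property forces every principal $(k\times k)$-block submatrix of the relevant Choi matrix to be separable, hence PPT. A decomposable amplification $\id_k \otimes (\Gamma \circ \Phi)$ whose blocks are all separable should then be forced to be CP on the nose, which would say $\Gamma \circ \Phi$ is $k$-positive for all positive $\Gamma$, and by Theorem~\ref{thm-nEB-char}\ref{pos-nEB-1} re-deriving $k$-EB is automatic; the extra decomposability must be leveraged to push past $k$ up to $d_1$.

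\textbf{The hard part} will be making precise how the decomposability of $\id_k \otimes (\Gamma \circ \Phi)$ — a statement about a single fixed amplification level $k$ — bootstraps to \emph{full} separability at level $d_1$. I expect the right mechanism is that decomposability plus the $k$-EB-induced separability of all $k\times k$ principal blocks (Theorem~\ref{thm-kEB-Choi-block}) eliminates the co-CP summand: a decomposable map all of whose relevant compressions are CP can have no genuine co-CP part on the range of interest, so $\id_k \otimes (\Gamma \circ \Phi)$ is actually CP, forcing $\Gamma \circ \Phi$ to be $k$-positive for \emph{every} positive $\Gamma$, and then an iteration or a direct dimension count promotes $k$ to $d_1$. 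I would verify the base case and the promotion step carefully, since the gap between ``$k$-positive for all positive $\Gamma$'' and ``$d_1$-positive for all positive $\Gamma$'' is exactly the gap between $k$-EB and EB, and closing it is the entire content of the theorem.
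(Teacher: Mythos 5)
Your treatment of $(i)\Rightarrow(ii)$ and $(ii)\Rightarrow(iii)$ is correct and is the same as the paper's. The problem is $(iii)\Rightarrow(i)$. You set up the contradiction exactly as the paper does: if $\Phi$ is not EB, pick a positive map $\Gamma:\M{d_2}\to\M{d_1}$ with $\Gamma\circ\Phi$ not CP, and note that $k$-entanglement breaking of $\Phi$ makes $\Gamma\circ\Phi$ $k$-positive (Theorem \ref{thm-nEB-char}\ref{pos-nEB-1}). But the step that closes the argument --- that a map which is $k$-positive but \emph{not} CP must have a \emph{non-decomposable} amplification $\id_k\otimes(\cdot)$ --- is a nontrivial external theorem of Piani and Mora \cite{PiMo07}, which is precisely what the paper invokes at this point. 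You do not have this result, and what you offer in its place is not a proof: the assertion that ``a decomposable amplification whose principal $(k\times k)$-block submatrices are separable is forced to be CP on the nose'' is exactly the content that needs proving, and your heuristic for it does not engage with the right object. Decomposability of $\id_k\otimes(\Gamma\circ\Phi)$ is a statement about a map on $\M{k}\otimes\M{d_1}$, whose Choi matrix lives in $(\M{k}\otimes\M{d_1})\otimes(\M{k}\otimes\M{d_1})$; separability of the $(k\times k)$-blocks of $C_{\Gamma\circ\Phi}$ (Theorem \ref{thm-kEB-Choi-block}) is a statement about compressions of a much smaller matrix, and no mechanism is given for passing from one to the other.

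There is also a logical slip that compounds the gap. You write that CP-ness of $\id_k\otimes(\Gamma\circ\Phi)$ ``would say $\Gamma\circ\Phi$ is $k$-positive for every positive $\Gamma$,'' and then flag the promotion from $k$-positivity to $d_1$-positivity as the hard part. In fact $\id_k\otimes\Lambda$ is CP if and only if $\Lambda$ is CP, so if your forcing claim held you would immediately get that $\Gamma\circ\Phi$ is CP, contradicting the choice of $\Gamma$ and finishing the proof --- no bootstrapping, iteration, or dimension count from $k$ up to $d_1$ is needed. The promotion step you identify as the main obstacle is a phantom; the genuine obstacle is the forcing claim itself, and it is supplied by \cite{PiMo07}, not by anything in your outline.
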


\begin{proof} 
 $(i)\Rightarrow(ii)$ Let $m\geq 1$ and $\Gamma\in\mcl{P}(d_2,m)$. Since $\Phi$ is entanglement breaking $\Gamma\circ\Phi$ is CP, and  hence $\id_n\otimes \Gamma\circ\Phi$ is decomposable $-$ it is CP itself for all $n\geq 1$. \\
 $(ii)\Rightarrow(iii)$ Clear.\\
 $(iii)\Rightarrow(i)$ Suppose $\Phi$ is not an EB-map. Then there exists a $\Gamma\in\mcl{P}(d_2,d_1)$ such that $\Gamma\circ\Phi$ is not CP. Since $\Phi$ is $k$-EB, by Theorem \ref{thm-nEB-char}, $\Gamma\circ\Phi\in\mcl{P}_k(d_1)$. Since $\Gamma\circ\Phi\notin\mcl{CP}(d_1)$,  by \cite[Theorem]{PiMo07}, we have $\id_k\otimes\Gamma\circ\Phi$ is not decomposable, which is a contradiction. This completes the proof. 
\end{proof}

\section{Examples}\label{sec-eg}

 In this section, we discuss some examples of $k$-EB maps. Recall that the Schur product of two matrices $A=[a_{ij}],B=[b_{ij}]\in\M{d}$ is denoted and defined by $A\circ B:=[a_{ij}b_{ij}]\in\M{d}$. Define the map $S_A:\M{d}\to\M{d}$ by $S_A(X)=A\circ X$ for all $X\in\M{d}$. It is well known (c.f. \cite{Pau02}) that $S_A$ is CP if and only if $S_A$ is positive if and only if $A$ is positive. Further, $S_A$ is PPT if and only if $S_A$ is EB if and only if $A$ is positive diagonal. See \cite{RJP18,KMP18} for details. 

\begin{prop}
 Let $A\in\M{d}^+$. Then the following conditions are equivalent: 
 \begin{enumerate}[label=(\roman*)]
    \item $S_A$ is 2-EB.
    \item $S_A$ is $k$-EB, where $2<k<d$.
    \item $S_A$ is EB.
 \end{enumerate}
\end{prop}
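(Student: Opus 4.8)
The plan is to prove the chain of implications $(iii)\Rightarrow(i)\Rightarrow(ii)\Rightarrow(iii)$, where $(i)\Rightarrow(ii)$ is trivial since every $2$-EB map is $m$-EB for $m<k$ by Remark \ref{rmk-nEB}(i), and the reverse containment $(ii)\Rightarrow(iii)$ follows because an EB-map is automatically $2$-EB. The genuine content lies in showing $(iii)\Rightarrow(i)$: if $S_A$ is entanglement breaking then $A$ must be positive diagonal (by the stated characterization: $S_A$ is EB iff $A$ is positive diagonal), and conversely I would show that if $S_A$ is $2$-EB then $A$ is already forced to be positive diagonal, whence $S_A$ is EB. So the real statement to establish is $(i)\Rightarrow(iii)$, i.e.\ \textbf{$2$-EB already implies EB} for Schur multipliers.

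The key computation is to extract information from the $2\times 2$ principal block structure using Theorem \ref{thm-kEB-Choi-block}. First I would compute the Choi matrix of $S_A$ with respect to the standard matrix units. Since $S_A(E_{ij})=a_{ij}E_{ij}$, we get $C_{S_A}=\sum_{i,j}a_{ij}\,E_{ij}\otimes E_{ij}$, which is the Hadamard-type matrix $\sum_{i,j}a_{ij}\ranko{e_i\otimes e_i}{e_j\otimes e_j}$. By Theorem \ref{thm-kEB-Choi-block}, $S_A$ is $2$-EB (with $k=2\leq d$) if and only if every principal $(2\times2)$-block submatrix $C_{S_A(i_1,i_2)}$ is separable. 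For a fixed pair of indices $i<j$, this principal block is the $2$-block matrix $\sMatrix{S_A(E_{ii})&S_A(E_{ij})\\S_A(E_{ji})&S_A(E_{jj})}$, which lives in $\M{2}\otimes\M{d}$ and has support only on the coordinates $\{i,j\}$; after compressing to the $2$-dimensional subspace $\lspan\{e_i,e_j\}$ it becomes a $2\otimes2$ positive matrix whose only nonzero entries are $a_{ii},a_{jj}$ on the diagonal and $a_{ij},a_{ji}=\ol{a_{ij}}$ off-diagonal, arranged exactly as the density of a two-qubit state supported on $\lspan\{e_i\otimes e_i,\,e_j\otimes e_j\}$.

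The crucial step is then the separability criterion for this $2\otimes2$ block. For qubit-qubit systems, separability is equivalent to having positive partial transpose (the Horodecki/Peres criterion, which is the content of \cite[Theorem 3]{HHH96} invoked elsewhere in the paper). Taking the partial transpose of the relevant block swaps the off-diagonal coherence $a_{ij}$ into an antidiagonal position, and positivity of the partial transpose forces $\abs{a_{ij}}^2\leq$ (a product that, given the support pattern, vanishes), thereby forcing $a_{ij}=0$ for all $i\neq j$. Concretely, the partially transposed block contains a $2\times2$ submatrix of the form $\sMatrix{0&a_{ij}\\\ol{a_{ij}}&0}$ sitting on a zero diagonal, whose positivity forces $a_{ij}=0$. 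Since this holds for every off-diagonal pair, $A$ is diagonal, and being positive it is positive diagonal, so $S_A$ is EB by the stated Schur-multiplier characterization.

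I expect the main obstacle to be the careful bookkeeping of the block structure: identifying precisely which entries of the $2\otimes2$ compressed matrix are nonzero and confirming that its support lies in the two-dimensional space $\lspan\{e_i\otimes e_i,e_j\otimes e_j\}$, so that the PPT-implies-separable criterion in dimension $2\otimes2$ applies and cleanly yields $a_{ij}=0$. One must check that the off-diagonal Schur entries genuinely appear as coherences obstructing separability rather than being absorbable by a local operation; this is where the explicit matrix of $C_{S_A(i_1,i_2)}$ must be written out and its positive partial transpose condition examined entry-by-entry.
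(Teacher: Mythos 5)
Your proof is correct, and at its core it is the same computation as the paper's: both arguments isolate, for each pair $i<j$, the same matrix with $a_{ii},a_{jj}$ sitting on the diagonal and $a_{ij}$ in a corner, take a partial transpose, and read off that $\sMatrix{0&a_{ij}\\ a_{ji}&0}\geq 0$, forcing $a_{ij}=0$ and hence diagonality of $A$. The only difference is the entry point: the paper composes with $Ad_{V_{ij}}$ for the matrix $V_{ij}\in\M{2\times d}$ with $(1,i)$ and $(2,j)$ entries equal to one, invokes Theorem~\ref{thm-nEB-char} to conclude that $S_A\circ Ad_{V_{ij}}$ is EB, and then uses only that EB-maps are PPT; you instead invoke Theorem~\ref{thm-kEB-Choi-block} to get separability of the principal $(2\times2)$-block $C_{S_A(i,j)}$ --- which is literally the same matrix as $C_{S_A\circ Ad_{V_{ij}}}$ --- and then apply the easy Peres direction, separable $\Rightarrow$ PPT. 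In particular you never need the nontrivial $2\otimes 2$ Horodecki implication (PPT $\Rightarrow$ separable), nor the compression to $\lspan\{e_i,e_j\}$: separability already implies positivity of the partial transpose in $\M{2}\otimes\M{d}$, where the same $2\times2$ submatrix appears. Two small bookkeeping slips, neither fatal: your chain $(iii)\Rightarrow(i)\Rightarrow(ii)\Rightarrow(iii)$ mislabels which implications are trivial, since Remark~\ref{rmk-nEB}(i) gives $k$-EB $\Rightarrow$ $m$-EB for $m<k$, i.e.\ $(ii)\Rightarrow(i)$ and $(iii)\Rightarrow(ii)$ rather than $(i)\Rightarrow(ii)$; but because you prove $(i)\Rightarrow(iii)$, the implication $(i)\Rightarrow(ii)$ follows by factoring through $(iii)$, so the equivalence stands. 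Also, the two-dimensional support of the compressed block is $\lspan\{e_1\otimes e_i,\,e_2\otimes e_j\}$, not $\lspan\{e_i\otimes e_i,\,e_j\otimes e_j\}$ --- a harmless indexing typo.
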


\begin{proof}
 Clearly $(iii)\Rightarrow (ii)\Rightarrow (i)$. Now, to prove $(i)\Rightarrow (iii)$, assume that $S_A$ is $2$-EB. To show that $S_A$ is EB, it is enough to prove that $A$ is a diagonal matrix. Given $1\leq i<j\leq d$ let $V_{ij}\in\M{2\times d}$ be the matrix with $(1,i)$ and $(2,j)$ entry equals one and zero elsewhere. By Theorem \ref{thm-nEB-char}, $S_A\circ Ad_{V_{ij}}$ is EB, and hence  
 $$(\id\otimes\T)C_{S_A\circ Ad_{V_{ij}}}=\Matrix{a_{ii}E_{ii}& a_{ij}E_{ji}\\ a_{ji}E_{ij}&a_{jj}E_{jj}}\in (\M{2}\otimes\M{d})^+,$$
  where $E_{kl}\in\M{d}$ are the matrix units. But this is possible only if $\Matrix{0&a_{ij}\\ a_{ji}&0}\in\M{2}^+$, and which implies that $a_{ij}=0=a_{ji}$. Thus $A$ is a diagonal matrix, and conclude that $S_A$ is EB. 
\end{proof}

\begin{prop}
 Given $V\in\M{d_1\times d_2}$ the following conditions are equivalent:
 \begin{enumerate}[label=(\roman*)]
     \item $Ad_V$ is $2$-EB.
     \item $Ad_V$ is $k$-EB, where $2<k<d$.
     \item $Ad_V$ is EB.  
 \end{enumerate} 
\end{prop}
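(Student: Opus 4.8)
The plan is to reduce everything to the single nontrivial implication $(i)\Rightarrow(iii)$, since $(iii)\Rightarrow(ii)\Rightarrow(i)$ is immediate from Remark \ref{rmk-nEB}(i): an EB-map is $d_1$-EB, and any $k$-EB map is $m$-EB for all $m\leq k$, so EB $\Rightarrow$ $k$-EB $\Rightarrow$ $2$-EB. Thus the entire content is to show that if $Ad_V$ is $2$-EB then $rank(V)\leq 1$, whence $Ad_V$ is EB.

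First I would use the fact that conjugations compose to conjugations: for any $W\in\M{2\times d_1}$ we have $Ad_V\circ Ad_W=Ad_{WV}$ with $WV\in\M{2\times d_2}$. Since $Ad_W\in\mcl{CP}(2,d_1)$ and $Ad_V$ is automatically CP (hence $2$-positive), Theorem \ref{thm-nEB-char}\ref{nEB-CP} applies and tells us that $2$-EB-ness of $Ad_V$ forces $Ad_{WV}$ to be an EB-map for \emph{every} such $W$. Now $Ad_{WV}$ has $WV$ as its single Choi--Kraus operator, so $SN(Ad_{WV})=rank(WV)$; being EB then forces $rank(WV)\leq 1$ for all $W\in\M{2\times d_1}$.

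The heart of the argument is the resulting elementary linear-algebra step: $rank(WV)\leq 1$ for every $W\in\M{2\times d_1}$ implies $rank(V)\leq 1$. Indeed, if $rank(V)\geq 2$ I can pick $z_1,z_2\in\mbb{C}^{d_2}$ with $Vz_1,Vz_2$ linearly independent in $\mbb{C}^{d_1}$, and then choose $W$ sending these two independent vectors to $e_1,e_2\in\mbb{C}^2$; this gives $WVz_i=e_i$, so $rank(WV)\geq 2$, a contradiction. Hence $rank(V)\leq 1$, so either $V=0$ or $V=\ranko{x}{y}$ for some $x\in\mbb{C}^{d_1},y\in\mbb{C}^{d_2}$. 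In the latter case $Ad_V(X)=\tr(X\ranko{x}{x})\,\ranko{y}{y}$ is in Holevo form (equivalently, $V$ is a single rank-one Choi--Kraus operator), so $Ad_V$ is EB by Theorem \ref{thm-EBmap-char}.

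I do not expect a genuine obstacle: the proof is a short reduction to the rank-one characterization of EB-maps via composition with arbitrary rank-$\leq 2$ conjugations. The only points requiring mild care are the dimension bookkeeping in the identity $Ad_V\circ Ad_W=Ad_{WV}$ and the standard fact $SN(Ad_U)=rank(U)$, which holds because $C_{Ad_U}=\ranko{\psi}{\psi}$ with $\psi=(I\otimes U^*)\Omega$ and $SR(\psi)=rank(U)$.
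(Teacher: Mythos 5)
Your proof is correct and takes essentially the same route as the paper's: both reduce to $(i)\Rightarrow(iii)$, compose $Ad_V$ with maps $Ad_W$ for $W\in\M{2\times d_1}$, invoke Theorem \ref{thm-nEB-char}\ref{nEB-CP} to force $Ad_{WV}$ to be EB, and derive $rank(V)\leq 1$ from the fact that $rank(WV)\leq 1$ for all such $W$. The only difference is that you spell out the existence of a $W$ with $rank(WV)\geq 2$ (and the degenerate case $V=0$), details the paper leaves implicit.
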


\begin{proof}
 To prove the non-trivial direction $(i)\Rightarrow (iii)$, assume that $Ad_V$ is $2$-EB. We show that rank$(V)=1$ so that $Ad_V$ is EB. If possible assume that $k=$rank$(V)>1$. Let $W\in\M{2\times d_1}$ be such that  rank$(WV)>1$. Then $Ad_{WV}:\M{2}\to\M{d_2}$ is not EB. But,  Theorem \ref{thm-nEB-char}\ref{nEB-CP} implies that $Ad_{V}\circ Ad_W=Ad_{WV}$ is EB, which is a contradiction. Hence rank$(V)=1$. This completes the proof. 
\end{proof}

 In the following, we consider linear maps satisfying some  equivariance property, which is significant in the study of $k$-positivity of linear maps defined on matrix spaces (\cite{COS18}) and entanglement detection (\cite{BCS20}). A linear map $\Phi:\M{d_1}\to\M{d_2}$ is said to be \emph{equivariant}, if for every unitary $U\in\M{d_1}$  there exists a  (not necessarily unitary) matrix  $V(U)\in\M{d_2}$ such that $\Phi\circ Ad_U=Ad_{V(U)}\circ\Phi$. 
  
\begin{prop}[\cite{COS18}]\label{prop-k-pos-char} 
 Let $\Phi:\M{d_1}\to\M{d_2}$ be an equivariant  map and $1\leq k\leq d_1$. Then the following conditions are equivalent: 
 \begin{enumerate}[label=(\roman*)]
    \item $\Phi$ is $k$-positive. 
    \item The principle $(k\times k)$-block submatrix $C_{\Phi(1,2,\cdots,k)}$ is positive.
    \item $\Phi\circ Ad_P$ is CP, where $P=\Matrix{I_{k}&\text{\large 0}_{k\times d_1-k}}\in\M{k\times d_1}$. 
 \end{enumerate}
\end{prop}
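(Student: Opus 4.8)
The plan is to prove the cycle $(i)\Rightarrow(ii)\Leftrightarrow(iii)\Rightarrow(i)$, so that the equivariance hypothesis is invoked exactly once, in $(iii)\Rightarrow(i)$. The equivalence $(ii)\Leftrightarrow(iii)$ and the implication $(i)\Rightarrow(ii)$ are immediate consequences of the Choi--Jamiolkowski isomorphism and need nothing beyond the positivity of a single rank-one matrix; all the genuine content sits in $(iii)\Rightarrow(i)$.

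For $(ii)\Leftrightarrow(iii)$ I would record the Choi-matrix identity for $P$ as in $(iii)$. A direct computation gives $Ad_P(E_{pq}^{(k)})=E_{pq}$ for $1\le p,q\le k$, where $E_{pq}\in\M{d_1}$ are the standard matrix units supported in the top-left $k\times k$ corner, and hence
\[
 C_{\Phi\circ Ad_P}=\sum_{p,q=1}^k E_{pq}^{(k)}\otimes\Phi(E_{pq})=C_{\Phi(1,2,\cdots,k)} .
\]
Thus $\Phi\circ Ad_P$ is CP if and only if $C_{\Phi(1,2,\cdots,k)}\geq 0$, which is $(ii)\Leftrightarrow(iii)$. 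For $(i)\Rightarrow(ii)$, put $\psi_0=\sum_{p=1}^k e_p^{(k)}\otimes e_p\in\mbb{C}^k\otimes\mbb{C}^{d_1}$, so that $C_{\Phi(1,2,\cdots,k)}=(\id_k\otimes\Phi)(\ranko{\psi_0}{\psi_0})$ with $\ranko{\psi_0}{\psi_0}\in(\M{k}\otimes\M{d_1})^+$; $k$-positivity then forces $C_{\Phi(1,2,\cdots,k)}\geq0$. Neither step uses equivariance.

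The main step is $(iii)\Rightarrow(i)$. First I would reduce $k$-positivity to maps of the form $\Phi\circ Ad_W$: since $\id_k\otimes\Phi$ is positive exactly when it is nonnegative on every $\ranko{\psi}{\psi}$ with $\psi\in\mbb{C}^k\otimes\mbb{C}^{d_1}$, and since each such $\psi$ equals $(I_k\otimes V)\Omega_k$ for some $V\in\M{d_1\times k}$, the identity $(\id_k\otimes\Phi)(\ranko{\psi}{\psi})=C_{\Phi\circ Ad_{V^*}}$ shows that $\Phi$ is $k$-positive if and only if $\Phi\circ Ad_W$ is CP for every $W\in\M{k\times d_1}$ (equivalently, by Lemma \ref{lem-k-pos}, since each $Ad_W$ is $k$-PEB). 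Given such a $W$, I would take its singular value decomposition $W=U_1 \Sigma P U_2$ with $U_1\in\M{k}$ and $U_2\in\M{d_1}$ unitary and $\Sigma\in\M{k}$ diagonal. Using the composition rule $Ad_A\circ Ad_B=Ad_{BA}$ this factors as $Ad_W=Ad_{U_2}\circ Ad_P\circ Ad_\Sigma\circ Ad_{U_1}$, and applying equivariance to the unitary $U_2$ gives
\[
 \Phi\circ Ad_W=Ad_{V(U_2)}\circ(\Phi\circ Ad_P)\circ Ad_\Sigma\circ Ad_{U_1}.
\]
Here $\Phi\circ Ad_P$ is CP by $(iii)$ and the three outer factors are conjugations, hence CP, so the composite $\Phi\circ Ad_W$ is CP, proving $\Phi$ is $k$-positive.

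The crux---and the only nonroutine point---is the factorization in the last display. The singular value decomposition concentrates all of the off-corner part of $W$ into the full unitary $U_2\in\M{d_1}$, which cannot be absorbed into the rank-$k$ corner map $Ad_P$ directly; it is precisely equivariance that lets us commute $Ad_{U_2}$ past $\Phi$ and dispose of it as a harmless conjugation $Ad_{V(U_2)}$. One should note that $V(U_2)$ need not be unitary, but $Ad_{V(U_2)}$ is completely positive regardless, so positivity is preserved throughout.
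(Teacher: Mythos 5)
Your proof is correct and follows essentially the route the paper itself prescribes: the paper deduces Proposition \ref{prop-k-pos-char} "through the same lines" as its proof of Theorem \ref{thm-nEB-char-3}, namely the Choi-block identity $C_{\Phi\circ Ad_P}=C_{\Phi(1,2,\cdots,k)}$ for $(ii)\Leftrightarrow(iii)$, and for $(iii)\Rightarrow(i)$ a singular-value-decomposition factorization of an arbitrary $Ad_W$ through $Ad_P$ followed by the equivariance relation $\Phi\circ Ad_{U}=Ad_{V(U)}\circ\Phi$ to absorb the big unitary. Your only deviation --- keeping $Ad_\Sigma$ and $Ad_{U_1}$ as separate CP factors instead of merging them into a single $Ad_{\wtilde{W}}$ with $\wtilde{W}=WD$ as the paper does --- is purely cosmetic.
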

  
  The characterization (iii) is not there in \cite{COS18}. But through the same lines of proof of the following theorem, which is an analogue of the above result for $k$-EB maps, we can get a simple alternative proof of Proposition \ref{prop-k-pos-char}. We leave the details to the reader.  
  
\begin{thm}\label{thm-nEB-char-3} 
 Let $\Phi:\M{d_1}\to\M{d_2}$ be an equivariant $k$-positive map, where $1\leq k\leq d_1$. Then the following conditions are equivalent: 
 \begin{enumerate}[label=(\roman*)]
    \item $\Phi$ is $k$-EB. 
    \item The principal $(k\times k)$-block submatrix $C_{\Phi(1,2,\cdots,k)}\in(\M{k}\otimes\M{d_2})^+$ is separable.
    \item $\Phi\circ Ad_P$ is entanglement breaking, where $P=\Matrix{I_{k}&\text{\large 0}_{k\times d_1-k}}\in\M{k\times d_1}$. 
 \end{enumerate}
\end{thm}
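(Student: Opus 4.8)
The plan is to establish the cycle $(i)\Rightarrow(ii)\Rightarrow(iii)\Rightarrow(i)$, mirroring the structure of Theorem \ref{thm-nEB-char-3}'s companion result for $k$-positivity (Proposition \ref{prop-k-pos-char}), and exploiting the equivariance of $\Phi$ at the single crucial step. First I would dispose of $(i)\Rightarrow(ii)$: we already noted in the paragraph before Theorem \ref{thm-kEB-Choi-block} that the principal $(k\times k)$-block submatrix can be written as
\begin{align*}
  C_{\Phi(1,\cdots,k)}=(\id_k\otimes\Phi)\big(\ranko{\textstyle\sum_{p=1}^k e_p^{(k)}\otimes e_p}{\textstyle\sum_{q=1}^k e_q^{(k)}\otimes e_q}\big),
\end{align*}
and since the vector state here lies in $(\M{k}\otimes\M{d_1})^+$, the $k$-EB hypothesis forces its image to be separable. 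This direction needs no equivariance. The implication $(ii)\Rightarrow(iii)$ I would handle by recognizing that $C_{\Phi\circ Ad_P}$ is, up to the compression by $P=\Matrix{I_k&\text{\large 0}}$, exactly the block $C_{\Phi(1,\cdots,k)}$; concretely $C_{\Phi\circ Ad_P}=(P\otimes I_{d_2})\,C_\Phi\,(P\otimes I_{d_2})^*$ picks out precisely the first $k$ rows and columns of blocks, so separability of the block yields separability of $C_{\Phi\circ Ad_P}$, whence $\Phi\circ Ad_P$ is EB by Theorem \ref{thm-EBmap-char}(vi).

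The substantive direction is $(iii)\Rightarrow(i)$, and this is where equivariance is indispensable. The idea is to leverage Theorem \ref{thm-nEB-char}\ref{nEB-proj}: $\Phi$ is $k$-EB as soon as $\Phi\circ Ad_Q$ is EB for \emph{every} rank-$k$ projection $Q\in\M{d_1}$, not merely for the fixed coordinate projection $P^*P$ attached to $P=\Matrix{I_k&\text{\large 0}}$. So the task is to upgrade the single hypothesis $(iii)$ to all rank-$k$ projections. Given an arbitrary rank-$k$ projection $Q$, write $Q=U^*(P^*P)U$ for a suitable unitary $U\in\M{d_1}$. Then $\Phi\circ Ad_Q=\Phi\circ Ad_U\circ Ad_{P^*P}\circ Ad_{U^*}$, and the equivariance relation $\Phi\circ Ad_U=Ad_{V(U)}\circ\Phi$ lets me slide the $Ad_U$ past $\Phi$, converting the expression into $Ad_{V(U)}\circ\Phi\circ Ad_{P^*P}\circ Ad_{U^*}$. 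Since $\Phi\circ Ad_{P^*P}$ agrees with $\Phi\circ Ad_P$ after composing with the inclusion (both capture EB of the restriction to the first $k$ coordinates), the hypothesis $(iii)$ makes the inner map EB, and pre/post-composition with the CP-maps $Ad_{U^*}$ and $Ad_{V(U)}$ preserves EB by Remark \ref{rmk-nEB}(ii) together with Theorem \ref{thm-EBmap-char}(ii). Thus $\Phi\circ Ad_Q$ is EB for every rank-$k$ projection, and Theorem \ref{thm-nEB-char} closes the loop.

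The main obstacle I anticipate is bookkeeping around the rectangular projection $P=\Matrix{I_k&\text{\large 0}_{k\times(d_1-k)}}\in\M{k\times d_1}$ versus the genuine rank-$k$ projection $P^*P\in\M{d_1}$: one must be careful that $Ad_P$ (which lands in $\M{k}$) and $Ad_{P^*P}$ (which stays in $\M{d_1}$) are interchangeable for the purposes of the EB conclusion, using that EB is stable under composing with the coordinate inclusion $\iota:\M{k}\to\M{d_1}$ on the appropriate side. A secondary subtlety is that $V(U)$ need not be unitary, only some matrix in $\M{d_2}$; but this causes no trouble because $Ad_{V(U)}$ is automatically CP regardless, and post-composition of an EB-map with any CP-map remains EB. Once these identifications are made precise the argument is short, and I would present it exactly in the order $(i)\Rightarrow(ii)\Rightarrow(iii)\Rightarrow(i)$ so that the equivariance hypothesis is invoked only in the final, decisive step.
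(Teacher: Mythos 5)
Your proof is correct, and its skeleton matches the paper's: the same cycle $(i)\Rightarrow(ii)\Rightarrow(iii)\Rightarrow(i)$, the same identification $C_{\Phi\circ Ad_P}=C_{\Phi(1,\dots,k)}$ for $(ii)\Rightarrow(iii)$ (the paper computes the Choi matrix of $\Phi\circ Ad_P$ entrywise, you compress $C_\Phi$ by $P\otimes I_{d_2}$; these are the same computation), and the same use of equivariance to slide a unitary past $\Phi$ in the decisive step. Where you genuinely diverge is the reduction in $(iii)\Rightarrow(i)$: the paper takes an arbitrary rectangular matrix $Q\in\M{k\times d_1}$, factors it by singular value decomposition as $Q=\wtilde{W}PU$ with $U\in\M{d_1}$ unitary, obtains $\Phi\circ Ad_Q=Ad_{V(U)}\circ\Phi\circ Ad_P\circ Ad_{\wtilde{W}}$, and concludes with Theorem \ref{thm-nEB-char}\ref{nEB-CP}, whereas you take an arbitrary rank-$k$ projection $Q\in\M{d_1}$, write $Q=U^*(P^*P)U$ by unitary conjugation, and conclude with Theorem \ref{thm-nEB-char}\ref{nEB-proj}. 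Both are valid; your version is cleaner on the surface, since diagonalizing a projection is more elementary than an SVD, and your handling of the rectangular-versus-square issue is exactly right (indeed $Ad_{P^*P}=Ad_P\circ Ad_{P^*}$, so hypothesis $(iii)$ plus pre-composition with the CP map $Ad_{P^*}$ makes $\Phi\circ Ad_{P^*P}$ EB), as is your remark that non-unitarity of $V(U)$ is harmless. The trade-off is that the SVD has not disappeared but merely relocated: it is precisely the tool the paper uses to prove the implication \ref{nEB-proj}$\Rightarrow$\ref{nEB-nPEB} of Theorem \ref{thm-nEB-char}, on which your final step leans, so in total the two arguments carry the same content. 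One small tidiness point: for ``composing an EB-map with CP-maps on either side stays EB'' your citations (Remark \ref{rmk-nEB}(ii), Theorem \ref{thm-EBmap-char}(ii)) are looser than needed; this is immediate from the definition of entanglement breaking together with the fact that $\id\otimes\Psi$ maps separable matrices to separable matrices for any CP-map $\Psi$.
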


\begin{proof}
 $(i)\Rightarrow(ii)$  Follows from Theorem \ref{thm-kEB-Choi-block}.  \\
 $(ii)\Rightarrow(iii)$  Let $E_{ij}^{(d_1)}\in\M{d_1}$ and $E_{ij}^{(k)}\in\M{k}$ be the standard  matrix units. Consider the Choi matrix 
 \begin{align*}
    C_{\Phi\circ Ad_P}
        =\sum_{i,j=1}^kE_{ij}^{(k)}\otimes\Phi(P^*E_{ij}^{(k)}P)
        =\sum_{i,j=1}^kE_{ij}^{(k)}\otimes\Phi(E_{ij}^{(d_1)})
        =C_{\Phi(1,2,\cdots,k)},
 \end{align*}
 which is separable by assumption, and hence $\Phi\circ Ad_P$ is EB. \\  
 $(iii)\Rightarrow (i)$ Let $Q\in\M{k\times d_1}$. By singular value decomposition, there exist unitary matrices $W\in\M{k}, U\in\M{d_1}$ and a rectangular matrix $\Sigma=\Matrix{D&\large{\text 0}_{k\times(d_1-k)}}\in\M{k\times d_1}$, where $D\in\M{k}$ is a diagonal matrix, such that  $Q=W\Sigma U$. Set $\wtilde{W}=WD\in\M{k}$.  Then $\wtilde{W}PU=W\Sigma U=Q$. Now since $\Phi$ is equivariant, corresponding to the unitary $U\in\M{d_1}$ there exists $V(U)\in\M{d_2}$ such that $\Phi\circ Ad_U=Ad_{V(U)}\circ\Phi$.    Hence
 \begin{align*}
    \Phi\circ Ad_Q
        =\Phi\circ Ad_U\circ Ad_P\circ Ad_{\wtilde{W}} 
        =Ad_{V(U)}\circ\Phi\circ Ad_P\circ Ad_{\wtilde{W}}.
 \end{align*}
 By assumption $\Phi\circ Ad_P$ is EB, and hence  from the above equation, it follows that $\Phi\circ Ad_Q$ is also EB. Since $Q\in\M{k\times d_1}$ is arbitrary, by Theorem \ref{thm-nEB-char},  $\Phi$ is $k$-EB.     
\end{proof}

\begin{rmk}
 In the above theorem, $(i)\Rightarrow(ii)\Rightarrow(iii)$ holds for every $\Phi\in\mcl{P}_k(d_1,d_2)$. 
\end{rmk}

  Next, we consider a particular example of an equivariant map, namely the Holevo-Werner map (\cite{WeHo02}). Given $\lambda\in\mbb{R}$ the Holevo-Werner map $\mcl{W}_\lambda:\M{d}\to\M{d}$ is given by 
 $$\mcl{W}_\lambda(X)=\tr(X)I-\lambda X^{\T}.$$
  It is well-known (\cite{CMW19,WeHo02}) that  
  \begin{gather*}
     \mcl{W}_\lambda \mbox{ is CP }\Longleftrightarrow \lambda\in [-1,1]\\
     \mcl{W}_\lambda \mbox{ is PPT }\Longleftrightarrow \mcl{W}_\lambda \mbox{ is EB }\Longleftrightarrow \lambda\in [-1,1/d]\\
     \mcl{W}_\lambda \mbox{ is 2-EB }\Longleftrightarrow \lambda\in [-1,1/2].
 \end{gather*}
  In the following, we discuss when do they become $k$-EB for $k>2$. First, we prove the following technical lemma.   

\begin{lem}\label{prop-direct sum of eb}
 Let $\Phi_i:\M{d}\to\M{d_i},~i=1,2$  and $\Phi:\M{d}\to\M{d_1+d_2}$ be linear maps such that 
 $$\Phi(X)=\Matrix{\Phi_1(X) & 0\\
                   0 & \Phi_2(X)}$$ for all $X\in\M{d}$.
Then $\Phi$ is $k$-EB if and only if $\Phi_i$'s are $k$-EB.
\end{lem}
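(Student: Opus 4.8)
The plan is to reduce the statement to an elementary fact about separability of block-diagonal positive matrices, working straight from the definition of a $k$-EB map. First I would fix the orthogonal decomposition $\mbb{C}^{d_1+d_2}=\mbb{C}^{d_1}\oplus\mbb{C}^{d_2}$ with the two isometric inclusions $\iota_s:\mbb{C}^{d_s}\to\mbb{C}^{d_1+d_2}$, $s=1,2$, so that $\Phi_s=Ad_{\iota_s}\circ\Phi$ and $\Phi=Ad_{\iota_1^*}\circ\Phi_1+Ad_{\iota_2^*}\circ\Phi_2$. Applying $\id_k\otimes(\cdot)$ and using $\id_k\otimes Ad_{V}=Ad_{I_k\otimes V}$, this yields, for every $X\in(\M{k}\otimes\M{d})^+$, the identity
\begin{align*}
  (\id_k\otimes\Phi)(X)
   =(I_k\otimes\iota_1)(\id_k\otimes\Phi_1)(X)(I_k\otimes\iota_1)^*
     +(I_k\otimes\iota_2)(\id_k\otimes\Phi_2)(X)(I_k\otimes\iota_2)^*,
\end{align*}
which exhibits $(\id_k\otimes\Phi)(X)$ as the orthogonal direct sum of $(\id_k\otimes\Phi_1)(X)$ and $(\id_k\otimes\Phi_2)(X)$ inside $\M{k}\otimes\M{d_1+d_2}=(\M{k}\otimes\M{d_1})\oplus(\M{k}\otimes\M{d_2})$.

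Second, I would prove the key lemma: for $Y_s\in(\M{k}\otimes\M{d_s})^+$, the positive matrix $Y=(I_k\otimes\iota_1)Y_1(I_k\otimes\iota_1)^*+(I_k\otimes\iota_2)Y_2(I_k\otimes\iota_2)^*$ is separable if and only if both $Y_1$ and $Y_2$ are separable. The ``if'' direction is immediate, since each summand is obtained from $Y_s$ by the local CP-map $\id_k\otimes Ad_{\iota_s^*}$, which preserves separability, and a sum of separable matrices is separable. For the ``only if'' direction I would compress: $Y_s=(I_k\otimes\iota_s)^*Y(I_k\otimes\iota_s)=(\id_k\otimes Ad_{\iota_s})(Y)$, using $\iota_s^*\iota_s=I_{d_s}$ and $\iota_1^*\iota_2=0$, and since $Ad_{\iota_s}$ is CP this again carries a separable $Y$ to a separable $Y_s$.

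Finally, I would assemble. Applying the lemma with $Y_s=(\id_k\otimes\Phi_s)(X)$, the matrix $(\id_k\otimes\Phi)(X)$ is separable exactly when both $(\id_k\otimes\Phi_1)(X)$ and $(\id_k\otimes\Phi_2)(X)$ are; moreover block-diagonal positivity gives $(\id_k\otimes\Phi)(X)\geq 0$ iff both blocks are positive, so $\Phi$ is $k$-positive iff both $\Phi_s$ are. Since this holds for every $X\in(\M{k}\otimes\M{d})^+$, comparing with the definition yields that $\Phi$ is $k$-EB iff both $\Phi_1,\Phi_2$ are $k$-EB. The only genuinely delicate point is the ``only if'' direction of the separability lemma --- extracting separability of the diagonal blocks from separability of the whole --- but this is handled cleanly by compression, which is a local CP operation and therefore separability-preserving. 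Alternatively, one could route the argument through Theorem \ref{thm-nEB-char}\ref{nEB-CP}: since $\Phi\circ\Psi=(\Phi_1\circ\Psi)\oplus(\Phi_2\circ\Psi)$ for $\Psi\in\mcl{CP}(k,d)$, it suffices to know that a direct sum of CP-maps is EB iff both summands are, which follows from the same separability lemma applied to Choi matrices.
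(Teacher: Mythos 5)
Your argument is correct, and it takes a genuinely different route from the paper's. The paper never touches the matrices $(\id_k\otimes\Phi)(X)$ directly: it works at the level of maps, using Theorem \ref{thm-nEB-char}\ref{nEB-CP} ($\Phi$ is $k$-EB iff $\Phi\circ\Psi$ is EB for every $\Psi\in\mcl{CP}(k,d)$). Its forward implication is your compression step in map form --- $\Phi_i\circ\Psi=Ad_{V_i^*}\circ\Phi\circ\Psi$ is EB because EB maps remain EB under composition with CP maps --- but its converse is handled quite differently: each $\Phi_i\circ\Psi$ is written in Holevo form $X\mapsto\sum_j\tr\big(XF_j^{(i)}\big)R_j^{(i)}$ (Theorem \ref{thm-EBmap-char}), and these are assembled into a Holevo form for $\Phi\circ\Psi$ by embedding the $R_j^{(i)}$ as diagonal blocks. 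You instead isolate a matrix-level lemma --- a block-diagonal positive matrix is separable iff its diagonal blocks are, both directions proved by local CP (indeed merely positive) maps --- and read the equivalence off the identity expressing $(\id_k\otimes\Phi)(X)$ as the block-diagonal sum of $(\id_k\otimes\Phi_1)(X)$ and $(\id_k\otimes\Phi_2)(X)$. What your version buys is self-containedness: it uses only the definition of $k$-EB, it handles the $k$-positivity clause of the definition explicitly (the paper leaves this implicit in the characterization it invokes), and the separability lemma is reusable elsewhere. What the paper's version buys is brevity, given that Theorem \ref{thm-nEB-char} and the Holevo form are already established machinery at that point in the text. Your closing alternative --- routing through $\Phi\circ\Psi=(\Phi_1\circ\Psi)\oplus(\Phi_2\circ\Psi)$ --- is in fact exactly the paper's skeleton, with your separability lemma substituting for its Holevo-form computation.
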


\begin{proof}
 Assume that $\Phi$ is $k$-EB. Let $\Psi\in\mcl{CP}(k,d)$. Then $\Phi\circ\Psi$ is EB, and hence $Ad_{V_i^*}\circ\Phi\circ\Psi$ is also EB, where
 \begin{align*}
   V_1=\Matrix{I_{d_1}&\text{\large{0}}_{d_1\times d_2}}\in\M{d_1\times(d_1+d_2)}
   \quad\mbox{and}\quad
   V_2=\Matrix{\text{\large{0}}_{d_2\times d_1}&I_{d_2}}\in\M{d_2\times(d_1+d_2)}. 
 \end{align*}
 But $Ad_{V_i^*}\circ\Phi\circ\Psi=\Phi_i\circ\Psi,~i=1,2$. Since $\Psi$ is arbitrary we conclude that $\Phi_i$'s are $k$-EB.  Conversely, assume that both $\Phi_1$ and $\Phi_2$ are $k$-EB, and let $\Psi\in\mcl{CP}(k,d)$. Then $\Phi_i\circ\Psi$'s are EB-maps, and hence there exist $F_j^{(i)}\in\M{k}^+$ and $R_j^{(i)}\in\M{d_i}^+$ such that 
 \begin{align*}
   \Phi_i\circ\Psi(X)=\sum_{j=1}^{r_i}\tr\big(XF_j^{(i)}\big)R_j^{(i)}
 \end{align*} 
 for all $X\in\M{k}$ and $i=1,2$. Then  
 \begin{align*}
   \Phi\circ\Psi(X)=\sum_{j=1}^{r_1}\tr\big(XF_j^{(1)}\big)\Matrix{R_j^{(1)}&\\&\text{\large{0}}_{d_2}}+\sum_{j=1}^{r_2}\tr\big(XF_j^{(2)}\big)\Matrix{\text{\large{0}}_{d_1}&\\&R_j^{(2)}},
 \end{align*}
 for all $X\in\M{k}$ so that  $\Phi\circ\Psi$ is EB. Since $\Psi$ is arbitrary $\Phi$ is $k$-EB. 
\end{proof}

\begin{thm}\label{thm-Wer-Hol-nEB}
 Given $1\leq k\leq d$ the following conditions are equivalent:
  \begin{enumerate}[label=(\roman*)]
     \item $\mcl{W}_\lambda$ is a $k$-entanglement breaking CP-map.
     \item $\mcl{W}_\lambda$ is a $k$-PPT map.
     \item $\lambda\in[-1, 1/k]$. 
  \end{enumerate}. 
\end{thm}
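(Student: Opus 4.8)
The plan is to use that $\mcl{W}_\lambda$ is equivariant and to run everything through the equivariant characterizations already available, proving the cycle $(i)\Rightarrow(ii)\Rightarrow(iii)\Rightarrow(i)$. The first thing I would record is that $\mcl{W}_\lambda$ is equivariant with $V(U)=\ol{U}$ and that $\T\circ\mcl{W}_\lambda$ is equivariant with $V(U)=U$; both follow at once from $(U^*XU)^{\T}=U^{\T}X^{\T}\ol{U}$. This makes Proposition \ref{prop-k-pos-char} and Theorem \ref{thm-nEB-char-3} applicable to $\mcl{W}_\lambda$ and to its transpose.

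For $(i)\Rightarrow(ii)$ I would use only the general fact that a $k$-EB map is $k$-PPT: if $\Phi$ is $k$-EB then for $X\in(\M{k}\otimes\M{d})^+$ the matrix $(\id_k\otimes\Phi)(X)$ is separable, hence has positive partial transpose, so $(\id_k\otimes(\T\circ\Phi))(X)=(\id_k\otimes\T)(\id_k\otimes\Phi)(X)\geq0$; thus $\T\circ\Phi$ is $k$-positive, and since $\Phi$ is $k$-positive by definition, $\Phi$ is $k$-PPT.

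For $(ii)\Rightarrow(iii)$ I would compute the two principal $(k\times k)$-block submatrices and invoke Proposition \ref{prop-k-pos-char}. From $\mcl{W}_\lambda(E_{pq})=\delta_{pq}I-\lambda E_{qp}$ one obtains
\begin{align*}
   C_{\mcl{W}_\lambda(1,\dots,k)}=I_{kd}-\lambda\sum_{p,q=1}^k E_{pq}^{(k)}\otimes E_{qp},
\end{align*}
whose last summand acts as the swap on $\mbb{C}^k\otimes\mbb{C}^k$ and as $0$ on $\mbb{C}^k\otimes\mbb{C}^{d-k}$, hence has spectrum $\{+1,-1,0\}$; positivity therefore forces $1-\lambda\geq0$ and $1+\lambda\geq0$, i.e. $\lambda\in[-1,1]$ (the lower bound comes from the antisymmetric subspace, present exactly when $k\geq2$). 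From $\T\circ\mcl{W}_\lambda(E_{pq})=\delta_{pq}I-\lambda E_{pq}$ one gets
\begin{align*}
   C_{\T\circ\mcl{W}_\lambda(1,\dots,k)}=I_{kd}-\lambda\ranko{\eta}{\eta},\qquad \eta=\sum_{p=1}^k e_p^{(k)}\otimes e_p,
\end{align*}
with $\|\eta\|^2=k$, so positivity forces $\lambda\leq1/k$. Intersecting the two constraints gives $\lambda\in[-1,1/k]$.

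The substantive direction is $(iii)\Rightarrow(i)$, where I would combine Theorem \ref{thm-nEB-char-3} with the direct-sum Lemma \ref{prop-direct sum of eb}. Since $[-1,1/k]\subseteq[-1,1]$ the map $\mcl{W}_\lambda$ is CP, in particular $k$-positive, so Theorem \ref{thm-nEB-char-3} reduces the claim to showing that $\mcl{W}_\lambda\circ Ad_P$ is entanglement breaking for $P=\Matrix{I_k&0_{k\times(d-k)}}\in\M{k\times d}$. A direct computation gives the block-diagonal form
\begin{align*}
   \mcl{W}_\lambda\circ Ad_P(Y)=\Matrix{\tr(Y)I_k-\lambda Y^{\T}&0\\0&\tr(Y)I_{d-k}},\qquad Y\in\M{k},
\end{align*}
so $\mcl{W}_\lambda\circ Ad_P$ is the block map whose summands are the Holevo--Werner map $\mcl{W}_\lambda$ on $\M{k}$ and the map $Y\mapsto\tr(Y)I_{d-k}$. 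The latter is in Holevo form, hence EB, and by Lemma \ref{prop-direct sum of eb} (with $k$ the domain dimension) the whole map is $k$-EB --- equivalently EB, its domain being $\M{k}$ --- iff the Holevo--Werner summand on $\M{k}$ is EB, which by the known characterization in dimension $k$ (\cite{WeHo02,CMW19}) happens exactly when $\lambda\in[-1,1/k]$; this closes the cycle, the case $k=1$ reducing both $(i)$ and $(iii)$ to complete positivity. The main obstacle is spotting this block-diagonal reduction, which collapses the $d$-dimensional problem onto the already-solved $k$-dimensional Werner map; the alternative is to prove directly that $I_{kd}-\lambda(\text{partial flip})$ is separable for $\lambda\in[-1,1/k]$, i.e. the Werner-state separability computation that can be relegated to an appendix.
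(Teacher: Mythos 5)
Your proof is correct, and on the substantive direction $(iii)\Rightarrow(i)$ it coincides with the paper's argument: equivariance plus Theorem \ref{thm-nEB-char-3} reduces $k$-entanglement breaking to entanglement breaking of $\mcl{W}_\lambda\circ Ad_P$, the block-diagonal form \eqref{eq-W} together with Lemma \ref{prop-direct sum of eb} collapses this to the Holevo--Werner map on $\M{k}$, and the known $k$-dimensional characterization finishes. Where you genuinely differ is in how condition $(ii)$ enters. The paper proves $(ii)\Leftrightarrow(iii)$ by the same equivariance reduction, invoking Proposition \ref{prop-k-pos-char} to pass to $\mcl{W}_\lambda\circ Ad_P$ and then citing that the Werner map on $\M{k}$ is PPT iff $\lambda\in[-1,1/k]$. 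You instead close a cycle: $(i)\Rightarrow(ii)$ by the general (and worth recording) observation that separability of $(\id_k\otimes\Phi)(X)$ forces positivity of its partial transpose, so every $k$-EB map is $k$-PPT; and $(ii)\Rightarrow(iii)$ by applying Proposition \ref{prop-k-pos-char} in its block-submatrix form to both $\mcl{W}_\lambda$ and $\T\circ\mcl{W}_\lambda$ (both equivariant, as you correctly verify) and computing the spectra explicitly --- the partial swap with eigenvalues $\{1,-1,0\}$ and $I_{kd}-\lambda\ranko{\eta}{\eta}$ with $\norm{\eta}^2=k$. Both computations are correct, and this buys self-containedness: you never need the dimension-$k$ PPT characterization as an external input, at the cost of redoing a computation the paper outsources to \cite{WeHo02,CMW19}. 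One shared caveat: as your own parenthetical remark about the antisymmetric subspace shows, for $k=1$ condition $(ii)$ only forces $\lambda\leq 1$ (e.g. $\mcl{W}_{-2}$ is a positive map, hence $1$-PPT), so the implication $(ii)\Rightarrow(iii)$ really requires $k\geq 2$; this defect is in the theorem's statement itself and afflicts the paper's proof equally, since the Werner map on $\M{1}$ is PPT for every $\lambda\leq 1$, so it should not be counted against your argument.
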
 
 
\begin{proof}  
 $(i)\Leftrightarrow (iii)$ We know that $\mcl{W}_\lambda$ is CP if and only if $\lambda\in [-1,1]$.  Also since $\mcl{W}_\lambda$ is equivariant, by Theorem \ref{thm-nEB-char-3}, $\mcl{W}_\lambda$ is $k$-EB if and only if $\mcl{W}_\lambda\circ Ad_P$ is EB, where  $P=\Matrix{I_k & \text{\large{0}}_{k\times (d-k)}}\in \M{k\times d}$. But  
 \begin{align}\label{eq-W}
     \mcl{W}_\lambda\circ Ad_P(X)=\Matrix{\tr(X)I_k-\lambda X^{\T} & 0\\
                              0& \tr(X)I_{d-k}} 
 \end{align}
 for all $X\in\M{k}$. Since the map $\M{k}\ni X\mapsto \tr(X)I_{d-k}\in\M{d-k}$ is always EB, by Lemma \ref{prop-direct sum of eb}, $\mcl{W}_\lambda\circ Ad_P$ is EB if and only if the Holevo-Werner map  $\M{k}\ni X\mapsto \tr(X)I_k-\lambda X^{\T}\in\M{k}$ is EB, and which is true if and only if $\lambda\in [-1,\frac{1}{k}]$. \\ \\
 $(ii)\Leftrightarrow (iii)$  Since $\mcl{W}_\lambda$ is equivariant, by Proposition \ref{prop-k-pos-char}, $\mcl{W}_\lambda$ is $k$-PPT if and only if $\mcl{W}_\lambda\circ Ad_P$ is PPT, where  $P=\Matrix{I_k & \text{\large{0}}_{k\times (d-k)}}\in \M{k\times d}$. But, equation \eqref{eq-W} implies that $\mcl{W}_\lambda\circ Ad_P$ is PPT if and only if the Holevo-Werner map  $\M{k}\ni X\mapsto \tr(X)I_k-\lambda X^{\T}\in\M{k}$ is PPT if and only if $\lambda\in [-1,\frac{1}{k}].$
\end{proof}
 
\begin{note} 
 Equivalence of $(i)$ and $(iii)$ in the above theorem was observed  in  \cite{ChCh20} also.
\end{note}  
 
\begin{eg}[\cite{ChCh20}]\label{eg-EBCP-not-sym}
 The Holevo-Werner map $\mcl{W}_{\frac{1}{k}}:\M{d}\to\M{d}$ is a $k$-entanglement breaking CP-map for $1<k< d$. The composition $\mcl{W}_{\frac{1}{k}}\circ\T=\T\circ \mcl{W}_{\frac{1}{k}}$ is $k$-entanglement breaking but not  $(k+1)$-positive, hence it is not $(k+1)$-entanglement breaking.  
\end{eg}


\begin{cor}
 Let $\Gamma:\M{d}\to \M{d}$ be a positive map and $k\geq 1$. If  $\Gamma\in (\mcl{EB}_k(d))^\circ$, then the following holds:
 \begin{enumerate}[label=(\roman*)]
    \item $-(I_d\otimes \Gamma(I_d))\leq C_\Gamma\leq k (I_d\otimes\Gamma(I_d))$.
    \item $-(I_d\otimes \Gamma(I_d))\leq C_{\Gamma\circ\T}\leq k (I_d\otimes\Gamma(I_d))$.
    \item $-\tr(\Gamma(I_d))\leq\tr(C_{\Gamma}C_{\id_d})\leq k\tr(\Gamma(I_d))$.
    \item $-\tr(\Gamma(I_d))\leq\tr(C_{\Gamma}C_{\T})\leq k\tr(\Gamma(I_d))$.
 \end{enumerate}
\end{cor}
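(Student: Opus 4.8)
The plan is to reduce everything to pairing $\Gamma$ against a handful of explicit $k$-entanglement breaking maps, namely the Holevo--Werner maps and their transposes, and then to upgrade the resulting scalar (trace) inequalities into the operator inequalities by left-translating $\Gamma$.

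First I would record the two families of $k$-EB maps I need. By Theorem \ref{thm-Wer-Hol-nEB}, $\mcl{W}_\lambda\in\mcl{EB}_k(d)$ for every $\lambda\in[-1,1/k]$; since $\mcl{EB}_k(d)$ is invariant under left composition by positive maps (Remark \ref{rmk-nEB}) and $\T$ is positive, also $\T\circ\mcl{W}_\lambda\in\mcl{EB}_k(d)$ for the same range of $\lambda$. A direct computation of Choi matrices gives
\begin{align*}
 C_{\mcl{W}_\lambda}=I_d\otimes I_d-\lambda C_\T, \qquad C_{\T\circ\mcl{W}_\lambda}=I_d\otimes I_d-\lambda C_{\id_d},
\end{align*}
together with the elementary identity $\tr(C_\Delta)=\tr(\Delta(I_d))$ for any linear $\Delta:\M{d}\to\M{d}$. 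Since $\Gamma\in(\mcl{EB}_k(d))^\circ$ and these maps lie in $\mcl{EB}_k(d)$, the pairing $\tr(C_\Gamma C_\Theta)\geq 0$ applies: for $\Theta=\T\circ\mcl{W}_\lambda$ it reads $\tr(\Gamma(I_d))-\lambda\tr(C_\Gamma C_{\id_d})\geq 0$, and taking $\lambda=1/k$ and $\lambda=-1$ yields (iii); the identical computation with $\Theta=\mcl{W}_\lambda$ yields (iv). Note that these scalar bounds hold verbatim for \emph{any} element of $(\mcl{EB}_k(d))^\circ$, not just $\Gamma$.

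To obtain the operator inequalities (i) and (ii), I would first observe that $(\mcl{EB}_k(d))^\circ$ is closed under left composition by CP maps: by Theorem \ref{thm-dual-cone}, $\Gamma\circ\Theta^*$ is CP for all $\Theta\in\mcl{EB}_k(d)$, hence so is $(Ad_W\circ\Gamma)\circ\Theta^*=Ad_W\circ(\Gamma\circ\Theta^*)$, so $Ad_W\circ\Gamma\in(\mcl{EB}_k(d))^\circ$ for every $W\in\M{d}$. Now every vector of $\mbb{C}^d\otimes\mbb{C}^d$ has the form $\psi=(I_d\otimes W)\Omega_d$, and using $(I_d\otimes W^*)C_\Gamma(I_d\otimes W)=C_{Ad_W\circ\Gamma}$ one checks
\begin{align*}
 \ip{\psi,C_\Gamma\psi}=\tr(C_{Ad_W\circ\Gamma}C_{\id_d}), \quad \ip{\psi,C_{\Gamma\circ\T}\psi}=\tr(C_{Ad_W\circ\Gamma}C_\T), \quad \ip{\psi,(I_d\otimes\Gamma(I_d))\psi}=\tr\big((Ad_W\circ\Gamma)(I_d)\big),
\end{align*}
where the middle identity uses $\tr(C_{\Phi\circ\T}C_{\id_d})=\tr(C_\Phi C_\T)$. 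Applying the scalar bounds already established, now to the dual-cone element $Ad_W\circ\Gamma$ in place of $\Gamma$, these turn into $-\ip{\psi,(I_d\otimes\Gamma(I_d))\psi}\leq\ip{\psi,C_\Gamma\psi}\leq k\ip{\psi,(I_d\otimes\Gamma(I_d))\psi}$ and the analogous chain for $C_{\Gamma\circ\T}$. As $\psi$ is arbitrary, (i) and (ii) follow; indeed (iii) and (iv) are just the case $W=I_d$, i.e.\ $\psi=\Omega_d$.

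The Choi-matrix bookkeeping is routine; the one genuinely load-bearing idea is the passage from scalar to operator inequalities, which rests on the left-invariance of $(\mcl{EB}_k(d))^\circ$ under $Ad_W$ combined with the parametrization $\psi=(I_d\otimes W)\Omega_d$. I expect the only points needing care are confirming the three pairing identities with the correct placement of $\T$ and $\id_d$, and verifying that $\T\circ\mcl{W}_\lambda$ is indeed $k$-EB via left-invariance of $\mcl{EB}_k(d)$ under the positive map $\T$.
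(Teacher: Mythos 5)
Your proof is correct, but it runs in the opposite logical direction from the paper's. The paper obtains the operator inequalities (i)--(ii) in one stroke: since $\mcl{W}_\lambda$ and $\T\circ\mcl{W}_\lambda=\mcl{W}_\lambda\circ\T$ are \emph{self-adjoint} $k$-EB maps for $\lambda\in[-1,\tfrac{1}{k}]$, Theorem \ref{thm-dual-cone} gives that $\Gamma\circ\mcl{W}_\lambda$ and $\Gamma\circ\mcl{W}_\lambda\circ\T$ are CP, and their Choi matrices are exactly $(I_d\otimes\Gamma(I_d))-\lambda C_{\Gamma\circ\T}$ and $(I_d\otimes\Gamma(I_d))-\lambda C_\Gamma$; positivity at $\lambda=-1,\tfrac{1}{k}$ is (i)--(ii), and (iii)--(iv) then follow merely by compressing with $\Omega_d$, using $C_{\id_d}=\ranko{\Omega_d}{\Omega_d}$. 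You instead prove the scalar statements (iii)--(iv) first, directly from the trace-pairing definition of the dual cone tested against $C_{\mcl{W}_\lambda}=I_d\otimes I_d-\lambda C_{\T}$ and $C_{\T\circ\mcl{W}_\lambda}=I_d\otimes I_d-\lambda C_{\id_d}$, and then upgrade scalar to operator via the stability of $(\mcl{EB}_k(d))^\circ$ under left composition with $Ad_W$ together with the parametrization $\psi=(I_d\otimes W)\Omega_d$. All the identities you rely on check out (including $\tr(C_{\Phi\circ\T}C_{\id_d})=\tr(C_\Phi C_{\T})$ and the three pairing identities, and $C_\Gamma$ is self-adjoint since positive maps are hermiticity-preserving, so the quadratic-form bounds do give operator inequalities). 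The trade-off: your upgrade step is essentially a hand-made proof, in this special case, of the very equivalence that Theorem \ref{thm-dual-cone} packages (nonnegativity of the pairing against a mapping cone versus complete positivity of the compositions), so the paper's route is shorter, while yours is more self-contained in that the scalar half needs only the bare definition of the dual pairing. One simplification available to you: the stability of $(\mcl{EB}_k(d))^\circ$ under $Ad_W$ needs no argument via Theorem \ref{thm-dual-cone}, since the paper records in Section \ref{sec-Notn-Prel} that the dual of a mapping cone is again a mapping cone.
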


\begin{proof}
 By Theorem \ref{thm-dual-cone},  $\Gamma\circ\Theta^{*}$ is CP for all $\Theta\in\mcl{EB}_k(d)$. Since $\mcl{W}_{\lambda}$ and $\T\circ\mcl{W}_{\lambda}=\mcl{W}_{\lambda}\circ\T$ are self-adjoint $k$-EB maps for all $\lambda\in[-1,\frac{1}{k}]$, we have $\Gamma\circ\mcl{W}_{\lambda}$ and $\Gamma\circ(\mcl{W}_{\lambda}\circ\T)$ are CP-maps. Therefore,
 \begin{align*}
    C_{\Gamma\circ\mcl{W}_{\lambda}}=(I_d\otimes\Gamma(I_d))-\lambda C_{\Gamma\circ\T}\geq 0
    \quad\mbox{and}\quad
    C_{\Gamma\circ\mcl{W}_{\lambda}\circ\T}=(I_d\otimes\Gamma(I_d))-\lambda C_{\Gamma}\geq 0.
 \end{align*}
 Taking $\lambda=-1,\frac{1}{k}$ in the above inequalities we get  $(i)$ and $(ii)$. Now, from $(i)$, 
 \begin{align*}
     &-\bip{\Omega_d,(I_d\otimes \Gamma(I_d))\Omega_d}
      \leq \bip{\Omega_d,C_\Gamma\Omega_d}
      \leq k \bip{\Omega_d,(I_d\otimes\Gamma(I_d))\Omega_d}\\
      &\Longrightarrow
      -\tr\big((I_d\otimes\Gamma(I_d))\ranko{\Omega_d}{\Omega_d}\big)
      \leq\tr(C_\Gamma\ranko{\Omega_d}{\Omega_d})
      \leq k\tr\big((I_d\otimes\Gamma(I_d))\ranko{\Omega_d}{\Omega_d}\big)\\
      &\Longrightarrow 
      -\tr(\Gamma(I_d))\leq \tr(C_\Gamma C_{\id})\leq k\tr(\Gamma(I_d)).
 \end{align*} 
  Similarly, $(iv)$ follows from $(ii)$.
\end{proof}  

\begin{thm}\label{thm-Werner-modified}
 Let $1< k\leq d$ and $\Gamma:\M{d}\to\M{d}$ be a positive map. If $\lambda\in[-\frac{1}{k\norm{\Gamma}},\frac{1}{\norm{\Gamma}}]$, then $\mcl{W}_{\lambda,\Gamma}:\M{d}\to\M{d}$ given by 
 \begin{align}\label{eq-Werner-modified}
    \mcl{W}_{\lambda,\Gamma}(X):=\tr(X)I_d+\lambda \Gamma(X)
 \end{align}
 is a $k$-entanglement breaking map. 
\end{thm}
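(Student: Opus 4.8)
The plan is to derive the $k$-entanglement breaking property from $k$-positivity by means of Theorem~\ref{thm-nEB-char}\ref{pos-nEB-1}, with everything resting on a single two-sided operator estimate. Put $c:=\norm{\Gamma}$, and recall that $\Gamma(I_d)\le c\,I_d$ for the positive map $\Gamma$. The estimate I would isolate is the following: for every positive map $\Lambda:\M{d}\to\M{d}$ and every $Z\in(\M{k}\otimes\M{d})^+$, writing $T_Z:=(\id_k\otimes\tr)(Z)\in\M{k}^+$,
\begin{align}\label{eq-core-est}
 -\,T_Z\otimes\Lambda(I_d)\ \leq\ (\id_k\otimes\Lambda)(Z)\ \leq\ k\,\bigl(T_Z\otimes\Lambda(I_d)\bigr).
\end{align}

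Granting \eqref{eq-core-est}, the theorem follows by two applications. Since $(\id_k\otimes\mcl{W}_{\lambda,\Gamma})(Z)=T_Z\otimes I_d+\lambda\,(\id_k\otimes\Gamma)(Z)$, the estimate with $\Lambda=\Gamma$ together with $\Gamma(I_d)\le c\,I_d$ gives, for $\lambda\ge0$, the lower bound $(\id_k\otimes\mcl{W}_{\lambda,\Gamma})(Z)\ge(1-\lambda c)\,T_Z\otimes I_d\ge0$, and for $\lambda\le0$ the lower bound $(\id_k\otimes\mcl{W}_{\lambda,\Gamma})(Z)\ge(1+\lambda kc)\,T_Z\otimes I_d\ge0$; since $\lambda\in[-\tfrac1{kc},\tfrac1c]$ both coefficients are nonnegative, so $\mcl{W}_{\lambda,\Gamma}$ is $k$-positive. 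For an arbitrary positive $\Xi:\M{d}\to\M{d}$ we have $\Xi\circ\mcl{W}_{\lambda,\Gamma}(A)=\tr(A)\,\Xi(I_d)+\lambda\,(\Xi\circ\Gamma)(A)$; applying \eqref{eq-core-est} to the positive map $\Lambda=\Xi\circ\Gamma$ and using $(\Xi\circ\Gamma)(I_d)=\Xi(\Gamma(I_d))\le c\,\Xi(I_d)$, the identical computation (with $\Xi(I_d)$ in place of $I_d$) shows $\Xi\circ\mcl{W}_{\lambda,\Gamma}$ is $k$-positive for every positive $\Xi$. By Theorem~\ref{thm-nEB-char}\ref{pos-nEB-1} this is exactly the assertion that $\mcl{W}_{\lambda,\Gamma}$ is $k$-entanglement breaking.

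It remains to prove \eqref{eq-core-est}, which is the technical heart. The two inequalities are precisely the positivity of $\id_k\otimes\Delta_{\pm}$, where $\Delta_-(A):=\Lambda(A)+\tr(A)\Lambda(I_d)$ and $\Delta_+(A):=k\,\tr(A)\Lambda(I_d)-\Lambda(A)$; thus \eqref{eq-core-est} is equivalent to the $k$-positivity of $\Delta_-$ and $\Delta_+$. Testing $\id_k\otimes\Delta_{\pm}$ on an arbitrary $Z\ge0$ reduces, by spectral decomposition, to $Z=\ranko{w}{w}$ with $w=\sum_{a=1}^{k}e_a\otimes\xi_a\in\mbb{C}^k\otimes\mbb{C}^d$. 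Writing $w$ in Schmidt form $w=\sum_a s_a\,e_a\otimes v_a$ (a local unitary on $\mbb{C}^k$, with $\{v_a\}$ orthonormal) and conjugating by $\mathrm{diag}(s_1,\dots,s_k)\otimes I_d$ — a congruence, hence positivity preserving — reduces matters to the normalized case $T_Z=I_k$. So it suffices to show, for orthonormal $v_1,\dots,v_k\in\mbb{C}^d$,
\begin{align*}
 I_k\otimes\Lambda(I_d)+\bigl[\Lambda(\ranko{v_p}{v_q})\bigr]_{p,q=1}^{k}\ \geq\ 0
 \qquad\text{and}\qquad
 k\,I_k\otimes\Lambda(I_d)-\bigl[\Lambda(\ranko{v_p}{v_q})\bigr]_{p,q=1}^{k}\ \geq\ 0.
\end{align*}

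The obstacle is that $\id_k\otimes\Lambda$ is not positive when $\Lambda$ is merely positive, so one cannot push an operator inequality through it. The remedy I would use is \emph{separability}: with $\tilde{w}=\sum_{p=1}^k e_p\otimes v_p$ (so $\bigl[\Lambda(\ranko{v_p}{v_q})\bigr]=(\id_k\otimes\Lambda)(\ranko{\tilde{w}}{\tilde{w}})$ and $\norm{\tilde{w}}^2=k$), it is enough to prove that the matrices $I_k\otimes I_d+\ranko{\tilde{w}}{\tilde{w}}$ and $k\,I_k\otimes I_d-\ranko{\tilde{w}}{\tilde{w}}$ are separable; then, writing a separable decomposition $\sum_i A_i\otimes B_i$ with $A_i,B_i\ge0$ and applying $\id_k\otimes\Lambda$ term by term yields $\sum_i A_i\otimes\Lambda(B_i)\ge0$ because $\Lambda$ is positive, which is the required inequality once one recalls $(\id_k\otimes\Lambda)(I_k\otimes I_d)=I_k\otimes\Lambda(I_d)$. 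Decomposing $\mbb{C}^d=\mathrm{span}\{v_p\}\oplus\mathrm{span}\{v_p\}^{\perp}$ makes each of these matrices block diagonal: one block is the isotropic matrix $I_{k^2}+\ranko{\Omega_k}{\Omega_k}$ (resp.\ $k\,I_{k^2}-\ranko{\Omega_k}{\Omega_k}$) on $\mbb{C}^k\otimes\mbb{C}^k$, and the other is the manifestly separable product $I_k\otimes I_{d-k}$. Hence the argument comes down to the separability of the two isotropic states $I_{k^2}+\ranko{\Omega_k}{\Omega_k}$ and $k\,I_{k^2}-\ranko{\Omega_k}{\Omega_k}$, sitting exactly at the thresholds $\lambda=1$ and $\lambda=-1/k$ of the family $I_{k^2}+\lambda\ranko{\Omega_k}{\Omega_k}$; this threshold separability of a distinguished class of positive matrices is the main difficulty of the proof (as a sanity check, for $\Lambda=\T$ the maps $\Delta_{\pm}$ are exactly $\mcl{W}_{-1}$ and $k\,\mcl{W}_{1/k}$, whose $k$-positivity is already contained in Theorem~\ref{thm-Wer-Hol-nEB}).
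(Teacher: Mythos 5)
Your proposal is structurally sound and genuinely different from the paper's proof, but as written it has one real gap: the separability of $I_{k^2}+\ranko{\Omega_k}{\Omega_k}$ and of $k\,I_{k^2}-\ranko{\Omega_k}{\Omega_k}$, into which your whole argument funnels and which you yourself flag as ``the main difficulty,'' is never proved. Everything before that point checks out: the reduction to Theorem \ref{thm-nEB-char}\ref{pos-nEB-1} with $\Lambda=\Xi\circ\Gamma$ and $\Gamma(I_d)\le\norm{\Gamma}I_d$, the passage to rank-one $Z$ and then to $T_Z=I_k$ by Schmidt decomposition and congruence, the trick of pushing a separable decomposition through $\id_k\otimes\Lambda$, and the block splitting along $\mbb{C}^d=\lspan\{v_p\}\oplus\lspan\{v_p\}^{\perp}$ (which uses $k\le d$). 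To close the gap you can cite the classical isotropic-state threshold (\cite{HHH96}), or argue inside the paper: it is quoted as known that $\mcl{W}_\mu$ on $\M{k}$ is EB exactly for $\mu\in[-1,\frac{1}{k}]$; composing with $\T$ on the left preserves the Holevo form, hence preserves EB, and $C_{\T\circ\mcl{W}_\mu}=I_{k^2}-\mu\ranko{\Omega_k}{\Omega_k}$, so $\mu=-1$ and $\mu=\frac{1}{k}$ give $I_{k^2}+\ranko{\Omega_k}{\Omega_k}$ and $\frac{1}{k}\bigl(k\,I_{k^2}-\ranko{\Omega_k}{\Omega_k}\bigr)$, which are separable by Theorem \ref{thm-EBmap-char}(vi).

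Once you allow yourself results already in the paper, however, your ``technical heart'' collapses to one line, because the core estimate is nothing but the endpoint cases of Theorem \ref{thm-Wer-Hol-nEB} seen through Theorem \ref{thm-nEB-char}: indeed $\Delta_-=(\Lambda\circ\T)\circ\mcl{W}_{-1}$ and $\Delta_+=k\,(\Lambda\circ\T)\circ\mcl{W}_{1/k}$, and since $\mcl{W}_{-1}$ and $\mcl{W}_{1/k}$ are $k$-EB by Theorem \ref{thm-Wer-Hol-nEB}, the implication (i)$\Rightarrow$\ref{pos-nEB} of Theorem \ref{thm-nEB-char} makes these compositions $k$-positive for every positive $\Lambda$ --- no Schmidt reduction or isotropic-state analysis needed. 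This also exposes the relation to the paper's own proof, which uses the same two ingredients (Theorem \ref{thm-Wer-Hol-nEB} and stability of $k$-EB under left composition with positive maps, Remark \ref{rmk-nEB}(ii)) but on the primal side: setting $\alpha=-\lambda\norm{\Gamma}\in[-1,\frac{1}{k}]$ one has the conic decomposition
\begin{align*}
 \mcl{W}_{\lambda,\Gamma}=\frac{1}{\norm{\Gamma}}\Big(\Phi+\Gamma\circ\T\circ\mcl{W}_{\alpha}\Big),
 \qquad \Phi(X):=\tr(X)\big(\norm{\Gamma}I_d-\Gamma(I_d)\big),
\end{align*}
where $\Phi$ is EB (Holevo form) and $\Gamma\circ\T\circ\mcl{W}_{\alpha}$ is $k$-EB, so $\mcl{W}_{\lambda,\Gamma}$ lies in the convex cone $\mcl{EB}_k(d)$. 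That route is shorter and avoids duality altogether; what your route would buy, if you actually proved the threshold separability rather than naming it, is a self-contained derivation of the estimate that does not presuppose Theorem \ref{thm-Wer-Hol-nEB}.
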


\begin{proof}
 Let $\lambda\in[-\frac{1}{k\norm{\Gamma}},\frac{1}{\norm{\Gamma}}]$. Since $\alpha:=-\lambda\norm{\Gamma}\in [-1,\frac{1}{k}]$ Remark \ref{rmk-nEB}(ii) and Theorem \ref{thm-Wer-Hol-nEB} implies that  $\Gamma\circ\T\circ W_\alpha$ is a $k$-EB map. Let $\Phi\in\mcl{EB}(d)$ be the map given by $\Phi(X):=\tr(X)R$, where $R=\norm{\Gamma}I_d-\Gamma(I_d)\in\M{d}^+$. Now it follows that 
 \begin{align*}
     W_{\lambda,\Gamma}=\frac{1}{\norm{\Gamma}}\big(\Phi+\Gamma\circ\T\circ W_\alpha\big)
 \end{align*}
 is a $k$-EB map.
\end{proof}

 The following is a generalization of \cite[Theorem 3.3]{CMW19}.
 
\begin{thm}\label{thm-nEB-suff-condn}
 Let $1<k\leq d$ and $\Gamma:\M{d}\to\M{d}$ be a trace preserving positive map such that 
   $$\norm{\tr(X)I_d-\Gamma(X)}_\infty\leq \frac{1}{k}\norm{X}_\infty$$
 for all $X\in\M{d}$. Then $\Gamma$ is a $k$-entanglement breaking map.  
\end{thm}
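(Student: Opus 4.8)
The plan is to recognize $\Gamma$ itself as one of the modified Werner--Holevo maps of Theorem \ref{thm-Werner-modified} and then read off the conclusion with no separability computation at all. Introduce the ``defect'' map $\Lambda\colon\M{d}\to\M{d}$ defined by $\Lambda(X):=\tr(X)I_d-\Gamma(X)$, so that trivially $\Gamma(X)=\tr(X)I_d-\Lambda(X)$. Comparing with \eqref{eq-Werner-modified}, this is precisely the identity $\Gamma=\mcl{W}_{-1,\Lambda}$, i.e. $\Gamma$ is the modified Werner map built from the map $\Lambda$ at the parameter value $\lambda=-1$. Consequently, \emph{provided $\Lambda$ is a positive map}, Theorem \ref{thm-Werner-modified} will immediately declare $\Gamma$ to be $k$-entanglement breaking as soon as $\lambda=-1$ lies in the admissible interval $[-\tfrac{1}{k\norm{\Lambda}},\tfrac{1}{\norm{\Lambda}}]$ attached to $\Lambda$.

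First I would verify the two hypotheses needed to invoke Theorem \ref{thm-Werner-modified}. The positivity of $\Lambda$ is the one place where trace-preservation enters: for $X\in\M{d}^+$ the matrix $\Gamma(X)$ is positive, so its largest eigenvalue is bounded by its trace, and since $\Gamma$ is trace preserving we get $\norm{\Gamma(X)}_\infty\le\tr(\Gamma(X))=\tr(X)$; hence $\Gamma(X)\le\tr(X)I_d$ and therefore $\Lambda(X)=\tr(X)I_d-\Gamma(X)\ge 0$, so $\Lambda\in\mcl{P}(d)$. For the interval condition, observe that the assumed estimate $\norm{\tr(X)I_d-\Gamma(X)}_\infty\le\tfrac1k\norm{X}_\infty$ for all $X$ is, upon taking the supremum over $\norm{X}_\infty\le 1$, exactly the operator-norm bound $\norm{\Lambda}\le\tfrac1k$, i.e. $k\norm{\Lambda}\le 1$. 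This rearranges to $-1\ge-\tfrac{1}{k\norm{\Lambda}}$, while $-1\le\tfrac{1}{\norm{\Lambda}}$ holds automatically; thus $\lambda=-1$ does lie in the required interval for the positive map $\Lambda$, and applying Theorem \ref{thm-Werner-modified} to $\mcl{W}_{-1,\Lambda}=\Gamma$ finishes the argument. The degenerate case $\Lambda=0$ (for which the interval is not defined) is immediate: there $\Gamma(X)=\tr(X)I_d$ is of Holevo form with $R=I_d$, hence entanglement breaking and in particular $k$-EB.

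The substantive point, and the main obstacle if one does not spot it, is the very first step: the hypothesis is tailored so that trace-preservation upgrades the Hermitian-preserving difference $\tr(\cdot)I_d-\Gamma$ into a genuine \emph{positive} map $\Lambda$, which is exactly what Theorem \ref{thm-Werner-modified} requires of its building block. Everything after that is a bookkeeping match of the scalar $\lambda=-1$ against the endpoint $-\tfrac{1}{k\norm{\Lambda}}$, so the operator-norm hypothesis is optimal precisely in the sense that it places $\Gamma$ at the left boundary of the modified-Werner family. I would therefore present the proof in these two clean moves — positivity of $\Lambda$, then the endpoint identification — rather than attempting to exhibit a separable decomposition of the relevant principal block of $C_\Gamma$ directly.
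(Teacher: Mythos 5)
Your proof is correct and takes essentially the same route as the paper: the paper likewise sets $\wtilde{\Gamma}:=\mcl{W}_{-1,\Gamma}$ (your defect map $\Lambda$), observes it is positive with $\bnorm{\wtilde{\Gamma}}\leq\tfrac{1}{k}$, and applies Theorem \ref{thm-Werner-modified} at the endpoint $\lambda=-1$ to conclude that $\mcl{W}_{-1,\wtilde{\Gamma}}=\Gamma$ is $k$-entanglement breaking. You simply make explicit what the paper leaves implicit, namely the trace-preservation argument for positivity of the defect map and the degenerate case $\Lambda=0$.
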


\begin{proof}
 Let $\wtilde{\Gamma}:=W_{-1,\Gamma}:\M{d}\to\M{d}$, which is a positive map. Note that $\bnorm{\wtilde{\Gamma}}\leq \frac{1}{k}$, hence by Theorem \ref{thm-Werner-modified}, $W_{-1,\wtilde{\Gamma}}$ is $k$-EB. But $W_{-1,\wtilde{\Gamma}}=\Gamma$. 
\end{proof}

 In the rest of this section, we consider a particular case of the maps defined by \eqref{eq-Werner-modified}. Given $\lambda\in\mbb{R}$ define $\Phi_{\lambda,d}:\M{d}\to\M{d}$ by   
  \begin{align}\label{eq-Phi-lambda}
      \Phi_{\lambda,d}(X):=\tr(X)I+ \lambda(X+X^{\T})
  \end{align}
 for all $X\in\M{d}$. Clearly $\Phi_{\lambda,d}=\Phi_{\lambda,d}\circ\T$, hence $\Phi_{\lambda,d}$ is CP if and only if $\Phi_{\lambda,d}$ is PPT. Note that $\Phi_{\lambda,d}$ is unital if and only if $d+2\lambda=1$ if and only if $\Phi_{\lambda,d}$ is trace preserving. 
 
\begin{thm}\label{thm-Phi-lambda}
 Let $\lambda\in\mbb{R}, 1<k\leq d$. Then the following statements holds. 
 \begin{enumerate}[label=(\roman*)]
     \item\label{Phi-lambda-pos} 
             $\Phi_{\lambda,d}$ is positive if and only if $\lambda\in [-\frac{1}{2},\infty)$.
     \item \label{Phi-lambda-CP}
             $\Phi_{\lambda,d}$ is EB if and only if $\Phi_{\lambda,d}$ is CP if and only if $\lambda\in[-\frac{1}{d+1},1]$. 
     \item \label{Phi-lambda-nEB}
             $\Phi_{\lambda,d}$ is $k$-EB implies $\Phi_{\lambda,k}$ is EB (and hence $\lambda\in[-\frac{1}{k+1},1]$). Conversely if $\lambda\in[\frac{-1}{2k},1]$, then $\Phi_{\lambda,d}$ is $k$-EB.
 \end{enumerate}
\end{thm}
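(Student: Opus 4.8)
The plan is to handle the three parts separately, in each reducing to a spectral or separability statement. For \ref{Phi-lambda-pos}, since a positive map is determined by its action on rank-one positive matrices, I would compute for a unit vector $x$
$$\Phi_{\lambda,d}(\ranko{x}{x})=I+\lambda\big(\ranko{x}{x}+\ranko{\bar x}{\bar x}\big),$$
using $\ranko{x}{x}^{\T}=\ranko{\bar x}{\bar x}$. Letting $A$ be the $d\times2$ matrix with columns $x,\bar x$, the nonzero eigenvalues of $\ranko{x}{x}+\ranko{\bar x}{\bar x}=AA^*$ agree with those of $A^*A=\Matrix{1 & \bar s\\ s & 1}$, where $s=\sum_i x_i^2$; they are $1\pm|s|$ with $|s|\in[0,1]$. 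Hence $\Phi_{\lambda,d}(\ranko{x}{x})$ has eigenvalues $1$ and $1+\lambda(1\pm|s|)$. For $\lambda\geq0$ all are positive; for $\lambda<0$ the smallest is $1+\lambda(1+|s|)$, minimised over unit $x$ at $|s|=1$ (e.g.\ $x$ real) to $1+2\lambda$. So $\Phi_{\lambda,d}$ is positive exactly when $\lambda\geq-\tfrac12$.

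For \ref{Phi-lambda-CP} I would pass to the Choi matrix
$$C_{\Phi_{\lambda,d}}=I+\lambda\,\ranko{\Omega_d}{\Omega_d}+\lambda F,\qquad F=\sum_{i,j}E_{ij}\otimes E_{ji},$$
and diagonalise it on the decomposition of $\mbb{C}^d\otimes\mbb{C}^d$ into $\mbb{C}\Omega_d$, the rest of the symmetric subspace, and the antisymmetric subspace, where $F$ acts as $+1,+1,-1$ and $\ranko{\Omega_d}{\Omega_d}$ as $d,0,0$ respectively. This gives eigenvalues $1+\lambda(d+1)$, $1+\lambda$, $1-\lambda$, so $C_{\Phi_{\lambda,d}}\geq0$, i.e.\ $\Phi_{\lambda,d}$ is CP, exactly when $\lambda\in[-\tfrac1{d+1},1]$. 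Since $\Phi_{\lambda,d}=\Phi_{\lambda,d}\circ\T$, CP coincides with PPT, so any EB map already lies in this range. For the converse, CP $\Rightarrow$ EB, I would prove that $C_{\Phi_{\lambda,d}}$ is separable throughout $[-\tfrac1{d+1},1]$ and invoke Theorem \ref{thm-EBmap-char}\,(vi).

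For \ref{Phi-lambda-nEB}, the forward implication comes from the block characterisation. If $\Phi_{\lambda,d}$ is $k$-EB, then by Theorem \ref{thm-kEB-Choi-block} the principal $(k\times k)$-block $C_{\Phi_{\lambda,d}(1,\dots,k)}$ of $C_{\Phi_{\lambda,d}}$ (w.r.t.\ the standard units) is separable; compressing the second tensor leg along the inclusion $\iota:\mbb{C}^k\hookrightarrow\mbb{C}^d$ yields
$$(I_k\otimes\iota)^*\,C_{\Phi_{\lambda,d}(1,\dots,k)}\,(I_k\otimes\iota)=\sum_{p,q=1}^{k}E_{pq}\otimes\Phi_{\lambda,k}(E_{pq})=C_{\Phi_{\lambda,k}},$$
which is therefore separable, so $\Phi_{\lambda,k}$ is EB and \ref{Phi-lambda-CP} in dimension $k$ forces $\lambda\in[-\tfrac1{k+1},1]$. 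For the converse I would split $[-\tfrac1{2k},1]$: if $\lambda\geq-\tfrac1{d+1}$ then $\Phi_{\lambda,d}$ is EB, hence $k$-EB, by \ref{Phi-lambda-CP}; otherwise $\lambda\in[-\tfrac1{2k},-\tfrac1{d+1})\subseteq[-\tfrac1{2k},\tfrac12]$, and I would apply Theorem \ref{thm-Werner-modified} to $\Gamma(X)=X+X^{\T}$, which is positive with $\Gamma(I_d)=2I_d$, hence $\norm{\Gamma}=2$ (a positive map attains its norm at the identity) and $\mcl{W}_{\lambda,\Gamma}=\Phi_{\lambda,d}$, giving $k$-EB on $[-\tfrac1{2k},\tfrac12]$.

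The only genuinely nontrivial step is the separability claimed in \ref{Phi-lambda-CP}: the eigenvalue computation yields complete positivity, but producing an explicit separable decomposition of $C_{\Phi_{\lambda,d}}=I+\lambda(\ranko{\Omega_d}{\Omega_d}+F)$ valid on all of $[-\tfrac1{d+1},1]$ is the crux, which I would isolate and settle separately (in an appendix). The rest is formal, resting on Theorems \ref{thm-EBmap-char}, \ref{thm-kEB-Choi-block} and \ref{thm-Werner-modified}.
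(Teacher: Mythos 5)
Your proposal has one genuine gap, and it sits exactly where you flag it: the implication CP $\Rightarrow$ EB in part \ref{Phi-lambda-CP}. Your spectral decomposition of $C_{\Phi_{\lambda,d}}=I_d\otimes I_d+\lambda\ranko{\Omega_d}{\Omega_d}+\lambda\Delta_d$ along $\mbb{C}\Omega_d$, the rest of the symmetric subspace, and the antisymmetric subspace correctly gives the eigenvalues $1+(d+1)\lambda$, $1+\lambda$, $1-\lambda$, hence the CP window $[-\tfrac{1}{d+1},1]$; this is in fact tidier than the paper, which extracts only the \emph{necessity} of that window from $\ip{\Omega_d,C_{\Phi_{\lambda,d}}\Omega_d}\geq 0$ together with a $2\times 2$ principal block. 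But positivity of the spectrum says nothing about separability, and ``I would settle it in an appendix'' is not a proof: the separability of $C_{\Phi_{\lambda,d}}$ on all of $[-\tfrac{1}{d+1},1]$ is the entire nontrivial content of \ref{Phi-lambda-CP}, and since your converse in \ref{Phi-lambda-nEB} for $\lambda\geq-\tfrac{1}{d+1}$ invokes \ref{Phi-lambda-CP}, the gap propagates there too. The paper devotes its whole Appendix to this point: it averages over the compact group $\{Ad_{U\otimes U}: U\in\M{d} \mbox{ real orthogonal}\}$, identifies the fixed-point space of this twirl as the span of $I_d\otimes I_d$, $\ranko{\Omega_d}{\Omega_d}$ and $\Delta_d$, shows that the averaging projection carries the separable cone onto the separable elements of that span, and then exhibits explicit product vectors whose twirls realize the two endpoints $\lambda=-\tfrac{1}{d+1}$ and $\lambda=1$ (Theorem \ref{thm-appdx}, Corollary \ref{cor-appndx}); convexity of the separable cone does the rest. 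To close your argument you must either supply such a construction or cite Corollary \ref{cor-appndx}; as written, \ref{Phi-lambda-CP} is unproved.

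The remainder of your proposal is correct and essentially the paper's route. In \ref{Phi-lambda-pos} your eigenvalue computation for $\Phi_{\lambda,d}(\ranko{x}{x})$ is a mild refinement of the paper's Cauchy--Schwarz estimate, and both necessity arguments come down to a real unit vector (the paper uses $E_{11}$). In \ref{Phi-lambda-nEB} your forward direction --- separability of the principal block $C_{\Phi_{\lambda,d}(1,\dots,k)}$ via Theorem \ref{thm-kEB-Choi-block}, then compression of the second leg along $\iota:\mbb{C}^k\hookrightarrow\mbb{C}^d$ --- is equivalent to the paper's, which instead writes $\Phi_{\lambda,k}=Ad_{P^*}\circ\Phi_{\lambda,d}\circ Ad_P$ with $P=\Matrix{I_k&0}$ and quotes Theorem \ref{thm-nEB-char}; both rest on the same rank-$k$ compression, and your local-compression step does preserve separability. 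The converse via Theorem \ref{thm-Werner-modified} applied to $\Gamma(X)=X+X^{\T}$ with $\norm{\Gamma}=2$ is exactly the paper's argument; that you split the interval at $-\tfrac{1}{d+1}$ while the paper splits at $0$ is immaterial.
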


\begin{proof}
  $\ref{Phi-lambda-pos}$ Assume $\Phi_{\lambda,d}$ is positive. Then $\Phi_{\lambda,d}(E_{11})\geq 0$, which implies that $\lambda\geq -\frac{1}{2}$. Conversely assume that $\lambda\geq -\frac{1}{2}$. Then given a unit vector $u\in\mbb{C}^d$ we have
  \begin{align*}
      \bip{x,\Phi_{\lambda,d}(\ranko{u}{u})x} 
         = \ip{x,x}+\lambda\big(\abs{\ip{x,u}}^2+\abs{\ip{x,\ol{u}}}^2\big)
         \geq \norm{x}^2-\frac{1}{2}\big(\abs{\ip{x,u}}^2+\abs{\ip{x,\ol{u}}}^2\big)
         \geq 0
  \end{align*}
 for all $x\in\mbb{C}^d$. Thus $\Phi_{\lambda,d}(\ranko{u}{u})\geq 0$. From spectral theorem it follows that $\Phi_{\lambda,d}$ is positive. \\ \\
 $\ref{Phi-lambda-CP}$ Clearly, $\Phi_{\lambda,d}$ is EB implies it is CP. Now assume that $\Phi$ is  a CP-map.   Note that 
  \begin{align*}
       C_{\Phi_{\lambda,d}} =(I_d\otimes I_d)+\lambda(\ranko{\Omega_d}{\Omega_d}+(\id\otimes\T)\ranko{\Omega_d}{\Omega_d}).
  \end{align*}
   Then $0\leq \ip{\Omega_d, C_{\Phi_{\lambda,d}}\Omega_d}=d(1+(d+1)\lambda)$, hence $1+(d+1)\lambda\geq 0$, i.e., $\lambda\geq -\frac{1}{d+1}$. Also, since the principal submatrix $\sMatrix{I+2\lambda E_{11} & \lambda(E_{12}+E_{21})\\ \lambda(E_{21}+E_{12})& I+2\lambda E_{22}}$ of $C_{\Phi_{\lambda,d}}$ is positive
 we have $\lambda^2\leq 1$, hence $\lambda\leq 1$.  Conversely, assume that  $\lambda\in [-\frac{1}{d+1},1]$. Then, by Corollary \ref{cor-appndx}, the Choi matrix $C_{\Phi_{\lambda,d}}$ is separable.  Therefore, $\Phi_{\lambda,d}$ is EB. \\ \\
  $\ref{Phi-lambda-nEB}$ Assume that $\Phi_{\lambda,d}$ is $k$-EB. Let $P=\Matrix{I_k&0_{k\times(d-k)}}\in\M{k\times d}$. Then, by Theorem \ref{thm-nEB-char}, $\Phi_{\lambda,d}\circ Ad_P$ is EB, and hence $Ad_{P^*}\circ\Phi_{\lambda,d}\circ Ad_P=\Phi_{\lambda,k}$ is also EB. So, from \ref{Phi-lambda-CP}, it follows that $\lambda\in[\frac{-1}{k+1},1]$.   Conversely, assume that $\lambda\in[\frac{-1}{2k},1]$. If $\lambda\in[0,1]$, then from \ref{Phi-lambda-CP}, we have $\Phi_{\lambda,d}$ is EB and hence $k$-EB. Now if $\lambda\in[\frac{-1}{2k},0]$, then consider the positive map $\Gamma(X)=X+X^{\T}$ on $\M{d}$. As $\norm{\Gamma}=2$, by Theorem \ref{thm-Werner-modified},  $\Phi_{\lambda,d}=\mcl{W}_{\lambda,\Gamma}$ is $k$-EB. 
\end{proof}

 In the above theorem, we believe that $\Phi_{\lambda,d}$ is $k$-EB if and only if $\Phi_{\lambda,k}$ is EB  if and only if $\lambda\in[-\frac{1}{k+1},1]$. But, we could not prove our claim. 

\begin{eg}\label{eq-nEB-nonCP}
 Given $k<\frac{d+1}{2}$, choose $\lambda\in [\frac{-1}{2k}, \frac{-1}{d+1})$. Then the map $\Phi_{\lambda,d}:\M{d}\to\M{d}$ is $k$-EB but not CP.
\end{eg}

 Let $\tr_1,\tr_2$ be the \emph{partial trace maps} on $\M{d_1}\otimes\M{d_2}$, i.e.,  
      $\tr_1(A\otimes B): = \tr(A)B$
      and 
      $\tr_2(A\otimes B):=\tr(B)A$
  for all $A\in\M{d_1}$ and $B\in\M{d_2}$. It is well-known that if $X\in(\M{d_1}\otimes\M{d_2})^+$ is separable then $\tr_i(X),(\id_{d_1}\otimes\T)X,(\T\otimes\id_{d_2})X\geq 0$. If $rank(X)=\max\{d_1,d_2\}$, then $X\in(\M{d_1}\otimes\M{d_2})^+$ is separable iff $(\id\otimes\T)X\geq 0$.  If $X\in(\M{d_1}\otimes\M{d_2})^+$ and $d_1d_2\leq 6$, then $X$ is separable if and only if $(\T\otimes\id)X\geq 0$. See \cite{Per96, HHH96, HLV00} for details. We shall next obtain a necessary condition for separability. Note that, from the above known results, it holds trivially for $\lambda\geq 0$.  
 
\begin{thm}\label{thm-sep-necc}
 Let $X\in(\M{d}\otimes\M{d})^+$ be separable. Then
 \begin{align*}
    &\lambda X+\lambda(\T\otimes \id_d)X+ (I_d\otimes \tr_1(X))\geq 0\\
    &\lambda X+\lambda(\id_d\otimes\T)X+ (\tr_2(X)\otimes I_d)\geq 0
 \end{align*}
 for all $\lambda\in[-\frac{1}{2},\infty)$. 
\end{thm}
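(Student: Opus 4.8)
The plan is to recognize both left-hand sides as the images of $X$ under positive maps of the form ``$\Phi_{\lambda,d}$ acting on a single tensor leg'', and then to exploit the elementary fact that a positive map tensored with the identity preserves positivity when applied to \emph{separable} matrices. The map $\Phi_{\lambda,d}$ from \eqref{eq-Phi-lambda} is exactly the right object, and Theorem \ref{thm-Phi-lambda}\ref{Phi-lambda-pos} already tells us it is positive precisely for $\lambda\in[-\frac{1}{2},\infty)$, which matches the range in the statement.

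First I would record the algebraic identity
\begin{align*}
 (\Phi_{\lambda,d}\otimes\id_d)(X)=\lambda X+\lambda(\T\otimes\id_d)X+(I_d\otimes\tr_1(X)),
\end{align*}
valid for \emph{every} $X\in\M{d}\otimes\M{d}$. Since both sides are linear in $X$, it suffices to check it on simple tensors $A\otimes B$, where, by the definition of $\Phi_{\lambda,d}$,
\begin{align*}
 (\Phi_{\lambda,d}\otimes\id_d)(A\otimes B)=\Phi_{\lambda,d}(A)\otimes B=\tr(A)(I_d\otimes B)+\lambda(A\otimes B)+\lambda(A^{\T}\otimes B);
\end{align*}
summing over a decomposition of $X$ and using $\tr_1(\sum_i A_i\otimes B_i)=\sum_i\tr(A_i)B_i$ gives the claimed identity. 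The companion identity
\begin{align*}
 (\id_d\otimes\Phi_{\lambda,d})(X)=\lambda X+\lambda(\id_d\otimes\T)X+(\tr_2(X)\otimes I_d)
\end{align*}
is obtained in the same way with the two legs interchanged.

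Next I would conclude using positivity of $\Phi_{\lambda,d}$. Writing a separable $X$ as $X=\sum_i A_i\otimes B_i$ with $A_i,B_i\in\M{d}^+$, the first identity becomes $(\Phi_{\lambda,d}\otimes\id_d)(X)=\sum_i\Phi_{\lambda,d}(A_i)\otimes B_i$. For $\lambda\in[-\frac{1}{2},\infty)$, Theorem \ref{thm-Phi-lambda}\ref{Phi-lambda-pos} gives $\Phi_{\lambda,d}(A_i)\geq 0$; since also $B_i\geq 0$, each summand $\Phi_{\lambda,d}(A_i)\otimes B_i$ is a positive matrix, so the sum is positive. This is exactly the first asserted inequality, and the second follows identically from the second identity.

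The point worth emphasizing is that there is no genuine obstacle here beyond spotting the reformulation: the only subtlety is that $\Phi_{\lambda,d}\otimes\id_d$ need \emph{not} be a positive map on all of $\M{d}\otimes\M{d}$, because $\Phi_{\lambda,d}$ is merely positive (not completely positive) for $\lambda$ near $-\frac{1}{2}$, so one cannot feed it an arbitrary positive $X$. It is precisely the separability hypothesis that restores positivity, through the explicit decomposition into the manifestly positive terms $\Phi_{\lambda,d}(A_i)\otimes B_i$ — the same mechanism that underlies the Horodecki positive-map criterion for separability.
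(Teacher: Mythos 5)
Your proof is correct and is essentially the same as the paper's: the paper also writes the separable $X=\sum_i A_i\otimes B_i$, applies the positive map $\Gamma(X)=\tr(X)I+\lambda(X+X^{\T})$ (which is exactly $\Phi_{\lambda,d}$, positive for $\lambda\in[-\frac{1}{2},\infty)$ by Theorem \ref{thm-Phi-lambda}\ref{Phi-lambda-pos}) to one tensor leg, and obtains positivity term by term from $\Gamma(A_i)\otimes B_i\geq 0$. Your closing remark about why separability, rather than mere positivity of $X$, is needed is accurate but only makes explicit what the paper's argument uses implicitly.
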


\begin{proof}
 Suppose $X=\sum A_i\otimes B_i$ for some $A_i,B_i\in\M{d}^+$. Since $\Gamma(X)=\tr(X)I+\lambda(X+X^{\T})$ is a positive map for all $\lambda\in[-\frac{1}{2},\infty)$, we get
 \begin{align*}
    0&\leq(\Gamma\otimes\id_d)X\\
      &=\sum\big(\tr(A_i)I_d+\lambda(A_i+A_i^{\T})\big)\otimes B_i\\
      &=(I_d\otimes\sum\tr(A_i)B_i)+\lambda(\sum A_i\otimes B_i)+\lambda\sum(A_i^{\T}\otimes B_i)\\
      &=(I_d\otimes\tr_1(X))+\lambda X+\lambda(\T\otimes\id_d)X.
 \end{align*}
 Similarly by considering $(\id_d\otimes\Gamma)(X)$ we can get the second inequality.  
\end{proof}

\begin{cor}\label{cor-EB-necc}
 Let $\Phi:\M{d}\to\M{d}$ be an entanglement breaking CP-map. Then 
  \begin{align*}
      &(I_d\otimes\Phi(I_d))+\lambda(C_\Phi+C_{\Phi\circ\T})\geq 0\\
      &(\tr_2(C_\Phi)\otimes I_d)+\lambda(C_\Phi+C_{\T\circ\Phi})\geq 0
  \end{align*}
 for all $\lambda\in[-\frac{1}{2},\infty)$.
\end{cor}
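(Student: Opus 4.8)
The plan is to specialize Theorem \ref{thm-sep-necc} to the Choi matrix $X=C_\Phi$ of $\Phi$. The only input needed to get started is separability: since $\Phi$ is an entanglement breaking CP-map, Theorem \ref{thm-EBmap-char}(vi) tells us that $C_\Phi\in(\M{d}\otimes\M{d})^+$ is separable. Hence Theorem \ref{thm-sep-necc} applies to $X=C_\Phi$ for every $\lambda\in[-\frac{1}{2},\infty)$, giving both displayed inequalities of that theorem with $X$ replaced by $C_\Phi$.

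It then remains only to rewrite the auxiliary terms appearing in those inequalities in terms of $\Phi$. Working from $C_\Phi=\sum_{i,j}E_{ij}\otimes\Phi(E_{ij})$, I would record the identities
\begin{align*}
   \tr_1(C_\Phi)&=\sum_i\Phi(E_{ii})=\Phi(I_d),\\
   (\id_d\otimes\T)C_\Phi&=C_{\T\circ\Phi},\\
   (\T\otimes\id_d)C_\Phi&=C_{\Phi\circ\T}.
\end{align*}
The second is immediate, since transposing the second leg sends each $E_{ij}\otimes\Phi(E_{ij})$ to $E_{ij}\otimes(\T\circ\Phi)(E_{ij})$; the third follows after relabelling $i\leftrightarrow j$, as transposing the first leg gives $\sum_{i,j}E_{ji}\otimes\Phi(E_{ij})=\sum_{i,j}E_{ij}\otimes\Phi(E_{ji})=C_{\Phi\circ\T}$. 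The term $\tr_2(C_\Phi)$ admits no such simplification, which is precisely why it is left unsimplified in the statement.

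Substituting these identities into the two inequalities of Theorem \ref{thm-sep-necc} produces exactly the two inequalities of the corollary. The trivial case $\lambda\geq 0$ is already covered by the standard necessary conditions for separability recalled before Theorem \ref{thm-sep-necc}, so the genuine content lies in the range $\lambda\in[-\frac{1}{2},0)$, which the specialization handles uniformly. I do not anticipate any serious obstacle: the whole argument is a single substitution, and the only point demanding care is bookkeeping—keeping track of which tensor leg each partial transpose acts on and matching it to composition on the correct side of $\Phi$ (left composition with $\T$ for the output-leg transpose, right composition for the input-leg transpose).
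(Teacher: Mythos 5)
Your proposal is correct and takes essentially the same route as the paper, whose entire proof is the remark that the corollary "follows from the above Theorem as $C_\Phi$ is separable" (using Theorem \ref{thm-EBmap-char}(vi) for separability and then specializing Theorem \ref{thm-sep-necc} to $X=C_\Phi$). The identities $\tr_1(C_\Phi)=\Phi(I_d)$, $(\id_d\otimes\T)C_\Phi=C_{\T\circ\Phi}$ and $(\T\otimes\id_d)C_\Phi=C_{\Phi\circ\T}$ that you record are exactly the bookkeeping the paper leaves implicit, and you have matched each partial transpose to the correct side of composition with $\T$.
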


\begin{proof}
 Follows from the above Theorem as $C_\Phi$ is separable. 
\end{proof}

\section{Schmidt number reducing CP-maps}\label{sec-SN-red-CP}

 Suppose $\Phi\in\mcl{CP}(d_1,d_2)$. Then from the definition of Schmidt number, it follows that 
 $$SN(\Phi\circ\Psi)\leq SN(\Psi)$$
 for all $\Psi\in\mcl{CP}(m,d_1)$, or equivalently, 
 $$SN\big(\id_{m}\otimes\Phi)(X)\big)\leq SN(X)$$
 for all $X\in(\M{m}\otimes\M{d_1})^+$, where $m\geq 1$. We want to know as to when strict inequality occurs in the above inequalities? Note that if $\Psi$ is an EB-map or $SN(X)=1$, then the above inequalities become equality.   

\begin{lem}[{\cite[Lemma 2.1]{CMW19}}]\label{lem-CMW19}
 Given a  $k$-positive map $\Phi:\M{d_1}\to\M{d_2}$ the following conditions are equivalent:
   \begin{enumerate}[label=(\roman*)]
       \item $\Phi$ is $n$-entanglement breaking for $n\leq \min\{k,d_2\}$.
       \item $SN((\id_m\otimes\Phi)(X))\leq \max\{m-n+1, 1\}$, for all $X\in(\M{m}\otimes\M{d_1})^+$ and for all $m\leq \min\{k,d_2\}$.
   \end{enumerate}
\end{lem}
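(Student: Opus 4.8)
The plan is to treat the two implications separately; $(ii)\Rightarrow(i)$ is immediate, while $(i)\Rightarrow(ii)$ carries all the content. For $(ii)\Rightarrow(i)$ I would specialise to $m=n$, which is permitted since $n\le\min\{k,d_2\}$. Then $(ii)$ reads $SN\big((\id_n\otimes\Phi)(X)\big)\le\max\{1,1\}=1$ for every $X\in(\M{n}\otimes\M{d_1})^{+}$, i.e.\ $(\id_n\otimes\Phi)(X)$ is separable; and since $\Phi$ is $k$-positive with $n\le k$, it is $n$-positive, so by definition $\Phi$ is $n$-entanglement breaking.

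For $(i)\Rightarrow(ii)$, fix $m\le\min\{k,d_2\}$ and $X\in(\M{m}\otimes\M{d_1})^{+}$ with $r:=SN(X)$. Feeding the decomposition of Lemma~\ref{lem-SN-decomp} through $\id_m\otimes\Phi$ gives
\[
 (\id_m\otimes\Phi)(X)=\sum_{i,j}(V_{j_i}\otimes I_{d_2})\big((\id_j\otimes\Phi)(\ranko{\psi_{j_i}}{\psi_{j_i}})\big)(V_{j_i}\otimes I_{d_2})^{*},
\]
with isometries $V_{j_i}\colon\mbb{C}^{j}\to\mbb{C}^{m}$, vectors $\psi_{j_i}\in\mbb{C}^{j}\otimes\mbb{C}^{d_1}$, and $1\le j\le r$. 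Because conjugation by the local operator $V_{j_i}\otimes I_{d_2}$ cannot raise the Schmidt number and $SN(\sum_\ell Z_\ell)\le\max_\ell SN(Z_\ell)$ for positive $Z_\ell$, the bound in $(ii)$ follows once I prove the rank-one statement: for every $\psi\in\mbb{C}^{m}\otimes\mbb{C}^{d_1}$,
\[
 SN\big((\id_m\otimes\Phi)(\ranko{\psi}{\psi})\big)\le\max\{m-n+1,\,1\}.
\]

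I would establish this by induction on the ancilla dimension $m$. If $m\le n$ then $\Phi$ is $m$-EB by Remark~\ref{rmk-nEB}(i), so the output is separable and the bound is $1$. For $m>n$ set $Y:=(\id_m\otimes\Phi)(\ranko{\psi}{\psi})$ and let $W\colon\mbb{C}^{m-1}\to\mbb{C}^{m}$ be any isometry; since $\id_m\otimes\Phi$ intertwines compression on the ancilla, $(W^{*}\otimes I_{d_2})Y(W\otimes I_{d_2})=(\id_{m-1}\otimes\Phi)(\ranko{\psi'}{\psi'})$ with $\psi':=(W^{*}\otimes I_{d_1})\psi$, which by the inductive hypothesis has Schmidt number $\le m-n$. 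Thus \emph{every} codimension-one ancilla compression of $Y$ has Schmidt number $\le m-n$, and it remains to invoke the purely linear-algebraic step: if $Z\in(\M{p}\otimes\M{d_2})^{+}$ is such that $(W^{*}\otimes I_{d_2})Z(W\otimes I_{d_2})$ has Schmidt number $\le s$ for \emph{every} isometry $W\colon\mbb{C}^{p-1}\to\mbb{C}^{p}$, then $SN(Z)\le s+1$. Applying this with $p=m$ and $s=m-n$ closes the induction.

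This codimension-one step is where I expect the essential difficulty to lie: it says that enlarging the ancilla by one dimension raises the Schmidt number by at most one, and the obvious attempts fail. A Schur-complement peeling of the last ancilla block is useless because the Schur complement is a conjugation by a \emph{non-local} operator and so does not control $SN$; and averaging the hypothesis over all ancilla hyperplanes reconstructs $Z$ only up to an irreducible separable defect proportional to $I_{p}\otimes\tr_1(Z)$ (the partial trace over the first factor), which no positive averaging can remove. Moreover the natural dual reformulation — testing $\tr(ZC_\Gamma)\ge0$ against $(s+1)$-positive maps $\Gamma$, whose hyperplane restrictions are $s$-positive and hence pair nonnegatively with the compressions — merely reduces $(ii)$ to itself, so the cone machinery of Theorem~\ref{thm-nEB-char} cannot close the argument on its own. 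I would therefore prove the codimension-one step directly as a statement about Schmidt-number decompositions (equivalently, prove the sharper bound $SN((\id_m\otimes\Phi)(X))\le\max\{SN(X)-n+1,1\}$), patching the rank-one Holevo data available on the $n$-dimensional compressions — where separability, not mere $s$-positivity, is what is genuinely used — into a global decomposition by vectors of Schmidt rank at most $m-n+1$.
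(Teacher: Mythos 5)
The paper never proves this lemma at all — it is imported verbatim as \cite[Lemma~2.1]{CMW19} — so your argument has to stand on its own, and it does not. Everything you do correctly (the $m=n$ specialisation for $(ii)\Rightarrow(i)$, the reduction via Lemma~\ref{lem-SN-decomp} to rank-one inputs, the base case $m\le n$ via Remark~\ref{rmk-nEB}(i), the inductive scaffolding) is routine repackaging; the entire content of the lemma is concentrated in the ``codimension-one step,'' and that step is precisely where your text switches from proving to announcing intentions (``I would therefore prove the codimension-one step directly\,\dots\ patching the rank-one Holevo data\,\dots''). Your own diagnosis confirms the hole: you show that Schur complements, hyperplane averaging, and the duality with $(s+1)$-positive maps all fail or merely restate the claim, and then you invoke the step anyway. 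As it stands this is a correct reduction of the lemma to an unproved assertion that is essentially equivalent in strength to the lemma itself, i.e.\ a genuine gap, not a proof.

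The gap is closable, and by something far simpler than the patching you envisage; in fact a \emph{single} compression suffices, so neither your induction nor the quantifier ``for every isometry $W$'' is needed. Suppose $\rho\in(\M{p}\otimes\M{q})^{+}$, $P\in\M{p}$ is a projection of rank $t$, $Q=I_p-P$, and $(P\otimes I_q)\rho(P\otimes I_q)=\sum_{j}\ranko{\eta_j}{\eta_j}$ with $SR(\eta_j)\le s$. Write $\rho=\sum_i\ranko{\xi_i}{\xi_i}$ and split $\xi_i=(P\otimes I_q)\xi_i+(Q\otimes I_q)\xi_i$. The two families $\{(P\otimes I_q)\xi_i\}_i$ and $\{\eta_j\}_j$ are rank-one decompositions of the same positive matrix, hence (after padding with zero vectors) are related by a unitary $[w_{ji}]$ via $\eta_j=\sum_i w_{ji}(P\otimes I_q)\xi_i$. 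Setting $\zeta_j:=\sum_i w_{ji}\xi_i$, unitarity gives $\sum_j\ranko{\zeta_j}{\zeta_j}=\rho$, while $\zeta_j=\eta_j+(Q\otimes I_q)\bigl(\sum_i w_{ji}\xi_i\bigr)$; viewing vectors as $p\times q$ matrices, the two summands have row spaces in the ranges of $P$ and $Q$ respectively, so $SR(\zeta_j)\le s+\mathrm{rank}(Q)=s+(p-t)$, i.e.\ $SN(\rho)\le s+p-t$. Now take any rank-$n$ projection $P\in\M{m}$: since $\Phi$ is $n$-EB, $(P\otimes I_{d_2})\bigl((\id_m\otimes\Phi)(X)\bigr)(P\otimes I_{d_2})=(\id_m\otimes\Phi)\bigl((P\otimes I_{d_1})X(P\otimes I_{d_1})\bigr)$ is separable ($s=1$), and the displayed claim with $t=n$ yields $SN\bigl((\id_m\otimes\Phi)(X)\bigr)\le 1+(m-n)$ for every $X\in(\M{m}\otimes\M{d_1})^{+}$ at once — no pure-state reduction, no induction. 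This decomposition-lifting argument (the unitary freedom in rank-one decompositions) is the single missing idea, and it is essentially how the cited proof in \cite{CMW19} proceeds.
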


\begin{thm}\label{thm-2EB-char}
 Let $d_1,d_2>1$ and $\Phi:\M{d_1}\to\M{d_2}$ be a CP-map. Then the following conditions are equivalent:
 \begin{enumerate}[label=(\roman*)]
   \item $\Phi $ is $2$-entanglement breaking.
   \item $SN(\Phi \circ \Psi)< SN(\Psi)$ for all non-entanglement breaking CP-maps $\Psi:\M{m}\to\M{d_1}$ and $2\leq m\leq d_2$. 
   \item $SN(\Phi \circ \Psi)< SN(\Psi)$ for all non-entanglement breaking $d_2$-PEB maps $\Psi:\M{m}\to\M{d_1}$ and $m\geq 2$. 
   \item $SN(\Phi \circ \Psi)< SN(\Psi)$ for all non-entanglement breaking CP-maps $\Psi:\M{2}\to\M{d_1}$.   
 \end{enumerate}
\end{thm}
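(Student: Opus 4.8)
The plan is to prove the equivalence by establishing the implications $(i)\Rightarrow(ii)$, $(i)\Rightarrow(iii)$, $(ii)\Rightarrow(iv)$, $(iii)\Rightarrow(iv)$, and $(iv)\Rightarrow(i)$. Chaining $(i)\Rightarrow(ii)\Rightarrow(iv)\Rightarrow(i)$ and $(i)\Rightarrow(iii)\Rightarrow(iv)\Rightarrow(i)$ then closes the loop on all four conditions. The implications touching $(iv)$ are the routine ones, so I would dispose of them first and concentrate the real effort on showing that the $2$-entanglement breaking property forces a strict drop in Schmidt number.

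First the routine reductions. Since $d_2\geq 2$, statement $(iv)$ is exactly the $m=2$ instance of $(ii)$, giving $(ii)\Rightarrow(iv)$ for free. For $(iii)\Rightarrow(iv)$, observe that a non-entanglement breaking CP-map $\Psi:\M{2}\to\M{d_1}$ has $SN(\Psi)=2\leq d_2$, hence is a $d_2$-PEB map with domain dimension $\geq 2$, so the maps appearing in $(iv)$ form a subclass of those in $(iii)$. For $(iv)\Rightarrow(i)$, I would use that $d_1>1$ forces $SN(\Psi)\in\{1,2\}$ for any CP-map $\Psi:\M{2}\to\M{d_1}$: if $\Psi$ is non-entanglement breaking then $SN(\Psi)=2$ and $(iv)$ yields $SN(\Phi\circ\Psi)=1$, i.e.\ $\Phi\circ\Psi$ is entanglement breaking, while if $SN(\Psi)=1$ the composition is automatically entanglement breaking. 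Thus $\Phi\circ\Psi$ is entanglement breaking for \emph{every} CP-map $\Psi:\M{2}\to\M{d_1}$, and Theorem \ref{thm-nEB-char}\ref{nEB-CP} (with $k=2$) concludes that $\Phi$ is $2$-EB.

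The substantive part is $(i)\Rightarrow(ii)$ together with $(i)\Rightarrow(iii)$, which I would handle simultaneously via a single claim: if $\Phi$ is $2$-EB, then $SN\big((\id_m\otimes\Phi)(X)\big)<SN(X)$ for every $X\in(\M{m}\otimes\M{d_1})^+$ with $2\leq SN(X)\leq d_2$. Writing $X=C_\Psi$, the claim specializes to $(ii)$ when $2\leq m\leq d_2$ (whence $SN(\Psi)\leq m\leq d_2$) and to $(iii)$ when $\Psi$ is $d_2$-PEB (whence $SN(\Psi)\leq d_2$ directly). To prove the claim I set $r=SN(X)$ and apply the Schmidt decomposition of Lemma \ref{lem-SN-decomp} to write $X=\sum_{i,j}(V_{j_i}\otimes I_{d_1})\ranko{\psi_{j_i}}{\psi_{j_i}}(V_{j_i}\otimes I_{d_1})^*$ with isometries $V_{j_i}:\mbb{C}^j\to\mbb{C}^{m}$ and $1\leq j\leq r$. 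Pushing $\id_m\otimes\Phi$ through and using that conjugation by $V_{j_i}\otimes I_{d_2}$ and subadditivity of $SN$ do not raise the Schmidt number (exactly the computation in the proof of Theorem \ref{thm-nEB-char}\ref{nEB-SN-m}), I obtain
\begin{align*}
 SN\big((\id_m\otimes\Phi)(X)\big)\leq\max_{i,j}SN\big((\id_j\otimes\Phi)(\ranko{\psi_{j_i}}{\psi_{j_i}})\big).
\end{align*}
Since $j\leq r\leq d_2$ and $\Phi$ is $2$-EB, Lemma \ref{lem-CMW19} (with $n=2$) bounds each summand by $\max\{j-1,1\}\leq r-1$, so $SN\big((\id_m\otimes\Phi)(X)\big)\leq r-1<r$.

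The single point that needs care, and which I regard as the main obstacle, is that Lemma \ref{lem-CMW19} phrases its bound in terms of the \emph{ambient} dimension $m$ rather than in terms of $SN(X)$; applied crudely it gives only $SN\leq m-1$, useless when $m$ is much larger than $r=SN(X)$. The Schmidt decomposition of Lemma \ref{lem-SN-decomp} is precisely the device that repackages $X$ into blocks of size $j\leq r$, so that Lemma \ref{lem-CMW19} can be invoked in dimension $j$ rather than $m$ and the estimate collapses to $r-1$. The hypotheses $SN(X)\leq d_2$ built into $(ii)$ and $(iii)$ are exactly what guarantee $j\leq r\leq d_2$, which is the range in which Lemma \ref{lem-CMW19} may be applied to the $2$-EB map $\Phi$.
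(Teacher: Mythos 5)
Your proposal is correct, and it is organized differently enough from the paper's proof to be worth comparing. The paper proves $(i)\Rightarrow(ii)$ by strong induction on the domain dimension $m$: the base case $m=2$ follows from Theorem \ref{thm-nEB-char}\ref{nEB-CP}, and in the inductive step Lemma \ref{lem-CMW19} is invoked only in the extreme case $SN(\Psi)=m$, the blocks of smaller Schmidt rank produced by Lemma \ref{lem-SN-decomp} being handled by the induction hypothesis; a second, separate use of Lemma \ref{lem-SN-decomp} then gives $(ii)\Rightarrow(iii)$, and the proof closes with the same $(iii)\Rightarrow(iv)\Rightarrow(i)$ steps you give. You instead prove one non-inductive claim, $SN\big((\id_m\otimes\Phi)(X)\big)<SN(X)$ whenever $2\leq SN(X)\leq d_2$, by applying Lemma \ref{lem-CMW19} blockwise: after Lemma \ref{lem-SN-decomp} repackages $X$ into blocks of ambient dimension $j\leq SN(X)\leq d_2$, the bound $\max\{j-1,1\}\leq SN(X)-1$ holds for every block (EB or not, full Schmidt rank or not), so no recursion is needed. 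This buys a shorter argument that yields $(ii)$ and $(iii)$ simultaneously rather than routing $(iii)$ through $(ii)$; what the paper's organization buys in exchange is only that the dimension-sensitive strength of Lemma \ref{lem-CMW19} is needed at a single place, namely at full Schmidt number $r=m$, everything else being absorbed into the recursion. Your diagnosis of the delicate point --- that Lemma \ref{lem-CMW19}'s bound is phrased in the ambient dimension and becomes effective only after Lemma \ref{lem-SN-decomp} lowers that dimension to at most $SN(X)$ --- is exactly the mechanism the paper's induction is built around, so the two proofs rest on the same ideas even though yours dispenses with the induction.
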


\begin{proof}
  $(i)\Rightarrow (ii)$ Let  $2\leq m \leq d_2$ and $\Psi\in\mcl{CP}(m,d_1)$ be a non-entanglement breaking map. To show that $SN(\Phi\circ\Psi)< SN(\Psi)$. We prove by induction on $m$. When $m=2$, the required inequality follows from Theorem \ref{thm-nEB-char}(ii). Now assume that the result is true for $m-1$, i.e., $ SN(\Phi\circ\Psi)< SN(\Psi)$ for all non-entanglement breaking $\Psi\in\mcl{CP}(l,d_1)$, where $ 2\leq l\leq m-1\leq d_2$. Let $\Psi\in\mcl{CP}(m,d_1)$ be non-entanglement breaking and $r=SN(\Psi)$. If $r=m$, then by Lemma \ref{lem-CMW19} we have 
 \begin{align*}
     SN(\Phi\circ\Psi)=SN((\id_m\otimes\Phi)C_\Psi)\leq m-1 < SN(\Psi).
 \end{align*}
 Now assume that $1<r< m$. Then, by Lemma \ref{lem-SN-decomp}, for all $1\leq j\leq r$ there exist vectors $\psi_{j_i}\in \mbb{C}^j\otimes \mbb{C}^{d_1}$  such that 
 \begin{align*}
   SN(\Phi\circ\Psi) =SN((\id_m\otimes\Phi)(C_{\Psi})) 
                             \leq \max_{ij}\{SN((\id_j\otimes\Phi)(\ranko{\psi_{j_i}}{\psi_{j_i}}))\}
                             =\max_{ij}\{SN(\Phi\circ\Psi_{j_i})\},
\end{align*}
 where $\Psi_{j_i}\in\mcl{CP}(j,d_1)$ is such that $C_{\Psi_{j_i}}=\ranko{\psi_{j_i}}{\psi_{j_i}}$. If $\Psi_{j_i}$ is EB, then $SN(\Phi\circ\Psi_{j_i})=1<r=SN(\Psi)$. If $\Psi_{j_i}$ is not an EB-map, from induction hypothesis, $SN(\Phi\circ\Psi_{j_i})< SN(\Psi_{j_i})\leq j\leq SN(\Psi)$. So, from the above equation, we get $SN(\Phi\circ\Psi)<SN(\Psi)$.\\   
  $(ii)\Rightarrow(iii)$ Suppose $m\geq 2$ and $\Psi\in\mcl{CP}(m,d_1)$ is non-entanglement breaking.  If $m\leq d_2$, then there is nothing to prove. So assume that $m>d_2$ and let $r:=SN(\Psi)\leq d_2$. By Lemma \ref{lem-SN-decomp}, for all $1\leq j\leq r$ there exist $\Psi_{j_i}\in\mcl{CP}(j,d_1)$ and isometries $V_{j_i}:\mbb{C}^j\to\mbb{C}^{m}$ such that 
  \begin{align*}
     C_{\Psi}=\sum_{i,j}(V_{j_i}\otimes I_{d_1})C_{\Psi_{j_i}}(V_{j_i}\otimes I_{d_1})^*.
  \end{align*}
  Observe that not all $\Psi_{j_i}'$s are EB as $\Psi$ is not an EB-map. Since $ j\leq d_2$, by assumption 
  \begin{align*}
      SN(\Phi\circ\Psi)\leq \max_{i,j}SN(\Phi\circ\Psi_{j_i})<\max_{i,j}SN(\Psi_{j_i})\leq j\leq SN(\Psi).
  \end{align*}  
 $(iii)\Rightarrow(iv)$ This follows as $SN(\Psi)=2\leq d_2$ for all non-entanglement breaking $\Psi\in\mcl{CP}(2,d_1)$.\\ 
 $(iv)\Rightarrow(i)$ To prove that $\Phi$ is $2$-EB it is enough to show that $\Phi\circ \Psi$ is EB for every $\Psi\in\mcl{CP}(2,d_1)$. If $\Psi$ is EB, then we are done. Otherwise, by assumption $SN(\Phi\circ\Psi)< SN(\Psi)\leq 2$. Thus, $\Phi\circ\Psi$ has Schmidt number one, and hence it is EB.  
\end{proof}

 One would also ask, given a non-entanglement breaking $\Psi\in\mcl{CP}(d_2,d_1)$, does there exist a map  $\Phi\in\mcl{CP}(d_1,d_2)$ such that $SN(\Phi\circ\Psi)=SN(\Psi)$? If yes, then such a map $\Phi$ cannot be 2-EB. 

\begin{rmk}\label{rmk-SN-attan-CP}
 Suppose $\Psi\in\mcl{CP}(d_2,d_1)$ is a non-entanglement breaking map and $d_1\leq d_2$. Let $\Phi=Ad_V$, where $V\in\M{d_1\times d_2}$ is any co-isometry (i.e., $VV^*=\id_{d_1}$). Then
 \begin{align*}
    SN(\Psi)=SN(Ad_{V^*}\circ Ad_V\circ\Psi)\leq SN(\Phi\circ\Psi)\leq SN(\Psi).
 \end{align*}
 Thus, there exists $\Phi\in\mcl{CP}(d_1,d_2)$ such that $SN(\Phi\circ\Psi)=SN(\Psi)$. But if $d_1>d_2$ this is not the case.  For example, consider a $3$-entanglement breaking CP-map $\Psi_0:\M{d}\to\M{2}$ as given by Theorem \ref{thm-kEB-nonsym}, where $d\geq 4$. Since $\Psi_0^*$ is not $3$-EB there exists $\Psi_1\in\mcl{CP}(3,2)$ such that $\Psi=\Psi_0^*\circ\Psi_1\in\mcl{CP}(3,d)$ is not EB. Now, if possible assume that there exists $\Phi\in\mcl{CP}(d,3)$ such that $SN(\Phi\circ\Psi)=SN(\Psi)$. Since $\Psi_0$ is $3$-EB we have $\Psi_0\circ\Phi^*$ and hence $\Psi_1^*\circ\Psi_0\circ\Phi^*$ are EB-maps. Therefore,  $1=SN(\Phi\circ\Psi)=SN(\Psi)$, which is a contradiction. Thus there is no $\Phi\in\mcl{CP}(d,3)$ such that $SN(\Phi\circ\Psi)=SN(\Psi)$.  
\end{rmk}

\begin{note}
 Recall that the PPT-square conjecture states that the square of a PPT-map is EB; equivalently, the composition of two PPT-maps is EB. It is known (\cite{CMW19,CYT19}) that if $\Phi_1,\Phi_2\in\mcl{PPT}(3,3)$, then $\Phi_1\circ\Phi_2\in\mcl{EB}(3)$. We can prove this fact as follows also. If $\Phi_2$ is EB, then there is nothing to prove. So assume $\Phi_2$ is not EB. Since $\Phi_1$ is PPT, by Theorem \ref{prop-PPT-nEB}, $\Phi_1$ is $2$-EB. Hence, by Lemma \ref{lem-CMW19} and Theorem \ref{thm-2EB-char}, $SN(\Phi_1\circ\Phi_2)<SN(\Phi_2)\leq2$. Thus $SN(\Phi_1\circ\Phi_2)=1$.    
\end{note}

\begin{rmk}
 Let $\Phi:\M{d_1}\to\M{d_2}$ be a PPT-map and $m\geq 1$.
 \begin{enumerate}[label=(\roman*)]
   \item If $d_2=2$ or $3$, then $\Phi\circ\Psi\in\mcl{EB}(m,d_2)$ for all $\Psi\in\mcl{PEB}_2(m,d_1)$. 
   \item If $d_1=2$ or $3$, then $\Psi\circ\Phi\in\mcl{EB}(d_1,m)$ for all $\Psi\in\mcl{PEB}_2(d_2,m)$. 
 \end{enumerate}
 To see $(i)$, observe that by Proposition \ref{prop-PPT-nEB} $\Phi$ is $2$-EB. So if $\Psi\in\mcl{PEB}_2(m,d_1)$ is not EB, then $SN(\Phi\circ\Psi)<SN(\Psi)=2$. Thus $\Phi\circ\Psi$ is EB. Similarly by considering $\Phi^*$ we can get $(ii)$.
\end{rmk}

\begin{note}
 Given any $\Gamma\in\mcl{P}(d_1,d_2)$ and $k>1$ the map $\id_k\otimes\Gamma$ is not $2$-EB. For, suppose $A\in(\M{2}\otimes\M{k})^+$ is entangled and $B\in\M{d_1}^+$. Then $(\id_2\otimes\id_k\otimes\Gamma)(A\otimes B)=A\otimes\Gamma(B)$
 is not separable in $\M{2}\otimes(\M{k}\otimes\M{d_1})$. For, if $A\otimes\Gamma(B)$ is separable and $\tr^{(d_1)}:\M{k}\otimes\M{d_1}\to\M{k}$ is the partial trace map, then
 \begin{align*}
    \tr(\Gamma(B))A=(\id_2\otimes\tr^{(d_1)})(A\otimes\Gamma(B))\in\M{2}^+\otimes\M{k}^+,
 \end{align*}
  which is not possible as $A$ is entangled. 
\end{note}

\begin{thm}\label{thm-nEB+mEB}
 Let $2\leq m, n\leq d$ and $\Phi,\Psi:\M{d}\to\M{d}$ be $n$-entanglement breaking CP-map and $m$-entanglement breaking CP-map, respectively. Then $\Phi\circ\Psi$ is $(n+m-1)$-entanglement breaking CP-map.
\end{thm}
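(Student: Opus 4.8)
The plan is to verify the two requirements of being $(n+m-1)$-entanglement breaking separately, the positivity part being immediate and the separability part being the heart of the matter. Since $\Phi$ and $\Psi$ are CP, their composition $\Phi\circ\Psi$ is CP and hence $(n+m-1)$-positive; so the only thing to prove is that $(\id_p\otimes(\Phi\circ\Psi))(X)$ is separable for every $X\in(\M{p}\otimes\M{d})^+$, where I set $p:=\min\{n+m-1,d\}$ in order to stay inside the index ranges demanded by the two tools below. I would factor $(\id_p\otimes(\Phi\circ\Psi))(X)=(\id_p\otimes\Phi)(Y)$ with $Y:=(\id_p\otimes\Psi)(X)\in(\M{p}\otimes\M{d})^+$, and then chain together a Schmidt-number estimate for $\Psi$ with one for $\Phi$.

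First I would bound the Schmidt number of $Y$. Since $\Psi$ is a CP-map (hence $d$-positive) that is $m$-entanglement breaking, Lemma \ref{lem-CMW19} applied to $\Psi$ gives $SN(Y)=SN\big((\id_p\otimes\Psi)(X)\big)\leq\max\{p-m+1,1\}$, which is legitimate precisely because $p\leq d=\min\{d,d_2\}$. Next I would check that $\max\{p-m+1,1\}\leq n$ in both regimes: if $n+m-1\leq d$ then $p=n+m-1$ and $p-m+1=n\geq 1$, so the maximum equals $n$; if $n+m-1>d$ then $p=d$ and $p-m+1=d-m+1$, which is $\geq 1$ (as $m\leq d$) and $<n$ (since $n+m-1>d$). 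Hence $SN(Y)\leq n$ in every case.

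Now the hypothesis that $\Phi$ is $n$-entanglement breaking enters through Theorem \ref{thm-nEB-char}\ref{nEB-SN-m}: because $SN(Y)\leq n$, we obtain $SN\big((\id_p\otimes\Phi)(Y)\big)=1$, i.e.\ $(\id_p\otimes(\Phi\circ\Psi))(X)$ is separable. As $X\in(\M{p}\otimes\M{d})^+$ is arbitrary, $\Phi\circ\Psi$ is $p$-entanglement breaking. When $p=n+m-1$ this is exactly the assertion; when $p=d<n+m-1$ the map $\Phi\circ\Psi$ is $d$-EB and therefore entanglement breaking, which is $(n+m-1)$-EB as well (an EB-map is $k$-EB for every $k$). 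In all cases $\Phi\circ\Psi$ is a $(n+m-1)$-entanglement breaking CP-map.

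I expect the only genuine obstacle to be the bookkeeping around the two index constraints rather than any deep difficulty: Lemma \ref{lem-CMW19} forces the ambient dimension $p$ to be at most $\min\{k,d_2\}=d$, so one cannot naively take $p=n+m-1$ when $n+m-1>d$. Introducing $p=\min\{n+m-1,d\}$ and separating the two regimes disposes of this, and the decisive numerical inequality $\max\{p-m+1,1\}\leq n$ is exactly the bridge that lets the reduction-by-$\Psi$ estimate feed into the collapse-to-separable estimate for $\Phi$.
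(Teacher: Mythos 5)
Your proof is correct and follows essentially the same route as the paper's: both bound the Schmidt number of $(\id\otimes\Psi)(X)$ by $n$ using Lemma \ref{lem-CMW19} and then collapse to separability using the $n$-entanglement breaking property of $\Phi$ via Theorem \ref{thm-nEB-char}, with the same case split according to whether $n+m-1\leq d$. The only difference is notational: in the regime $n+m-1<d$ the paper argues with Choi matrices of maps $\Gamma\in\mcl{CP}(n+m-1,d)$ (via conditions \ref{nEB-CP} and \ref{nEB-nPEB-m} of Theorem \ref{thm-nEB-char}), whereas you work directly with states via condition \ref{nEB-SN-m}, which is the same argument transported through the Choi--Jamiolkowski correspondence.
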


\begin{proof}
 \ul{Case (1):} Suppose $d\leq n+m-1$. Let $X\in(\M{d}\otimes\M{d})^+$. Clearly, $Y=(\id_d\otimes\Psi)(X)\in(\M{d}\otimes\M{d})^+$. Now since $\Psi$ is $m$-EB, by Lemma \ref{lem-CMW19}, $SN(Y)\leq\max\{d-m+1,1\}= n$. Further, since $\Phi$ is $n$-EB, by Theorem \ref{thm-nEB-char}\ref{nEB-SN}, 
 \begin{align*}
     SN\big((\id_d\otimes\Phi\circ\Psi)(X)\big)=SN\big((\id_d\otimes\Phi)(Y)\big)=1.
 \end{align*}
 Thus $(\id_d\otimes\Phi\circ\Psi)(X)$ is separable for all $X\in(\M{d}\otimes\M{d})^+$. Hence $\Phi\circ\Psi$ is EB, and  in particular, $(n+m-1)$-entanglement breaking. \\
 \ul{Case (2):} Suppose $n+m-1< d$.   Let $\Gamma\in\mcl{CP}(n+m-1,d)$. We shall  show that  $\Phi\circ\Psi\circ\Gamma$ is EB so that, by Theorem \ref{thm-nEB-char}\ref{nEB-CP}, $\Phi\circ\Psi$ is $(n+m-1)$-entanglement breaking. Since $\Psi$ is $m$-EB, by Lemma \ref{lem-CMW19}, $SN(\Psi\circ\Gamma)=SN((\id_{n+m-1}\otimes\Psi)C_\Gamma)\leq n$. Thus $\Psi\circ\Gamma\in\mcl{PEB}_n(n+m-1,d)$. Since $\Phi$ is $n$-EB, by Theorem \ref{thm-nEB-char}\ref{nEB-nPEB-m},  $\Phi\circ\Psi\circ\Gamma$ is EB. This completes the proof. 
\end{proof}

 The following improvised version of \cite[Theorem 2.1]{CMW19} is an immediate consequence of the above theorem. 

\begin{cor}\label{cor-nEB+2EB-1}
 Let $2\leq n_i\leq d$ and $k\in\mbb{N}$. If $\Phi_i:\M{d}\to\M{d}$ are $n_i$-entanglement breaking CP-maps for $1\leq i\leq k$, then the composition $\Phi_1\circ\Phi_2\circ\cdots\circ\Phi_k$ is $(n-k+1)$-entanglement breaking CP-map, where $n=\sum_i n_i $. 
\end{cor}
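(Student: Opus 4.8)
The plan is to induct on $k$, with Theorem \ref{thm-nEB+mEB} supplying the inductive step. The base case $k=1$ is immediate: there $n-k+1=n_1$, and $\Phi_1$ is $n_1$-EB by hypothesis.

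For the step, fix $k\geq 2$ and assume the statement for any $k-1$ maps. Writing $\Psi:=\Phi_2\circ\cdots\circ\Phi_k$ and $n':=\sum_{i=2}^k n_i=n-n_1$, the induction hypothesis applied to $\Phi_2,\ldots,\Phi_k$ gives that $\Psi$ is $m$-EB with $m:=n'-(k-1)+1=n-n_1-k+2$. I would then feed the pair $\Phi_1$ (which is $n_1$-EB) and $\Psi$ (which is $m$-EB) into Theorem \ref{thm-nEB+mEB}, whose conclusion is that $\Phi_1\circ\Psi=\Phi_1\circ\cdots\circ\Phi_k$ is $(n_1+m-1)$-EB. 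Since $n_1+m-1=n-k+1$, this is exactly the assertion, and the CP property is automatic as a composition of CP-maps. Thus the whole argument is essentially bookkeeping on top of Theorem \ref{thm-nEB+mEB}.

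The only point needing care, and the main obstacle, is verifying the hypotheses $2\leq n_1\leq d$ and $2\leq m\leq d$ of Theorem \ref{thm-nEB+mEB}. The first is given. For the lower bound on $m$, rewrite $m=\sum_{i=2}^k(n_i-1)+1$; since each $n_i\geq 2$ there are $k-1$ summands each at least $1$, whence $m\geq k\geq 2$. The upper bound $m\leq d$, however, can genuinely fail: as every $n_i$ may be as large as $d$, the accumulated index $m$ may exceed $d$.

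I would dispose of this exactly as the proof of Theorem \ref{thm-nEB+mEB} does, by using the convention that a $j$-EB map with $j\geq d$ is nothing but an EB-map. Indeed, the note following the definition of $k$-EB maps gives EB $\Leftrightarrow$ $d$-EB, and combined with Remark \ref{rmk-nEB}(i) this shows that $j$-EB coincides with EB for every $j\geq d$. Hence, in the case $m>d$, the induction hypothesis already says $\Psi$ is EB; one may then invoke Theorem \ref{thm-nEB+mEB} with the capped index $d$ in place of $m$, or simply note that $\Phi_1\circ\Psi$ is EB because $\mcl{EB}(d)$ is a mapping cone and so absorbs left composition by the CP-map $\Phi_1$. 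A quick check shows that $m>d$ forces $n-k+1>d$ as well, so the target index likewise collapses to EB and the two conclusions agree. With this understood, the induction closes.
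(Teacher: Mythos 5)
Your proof is correct and follows exactly the route the paper intends: the paper presents Corollary \ref{cor-nEB+2EB-1} as an ``immediate consequence'' of Theorem \ref{thm-nEB+mEB}, which is precisely the induction you carry out, with the base case and the arithmetic $n_1+m-1=n-k+1$ checking out. Your handling of the case $m>d$ (capping the index at $d$ and using that $j$-EB coincides with EB for $j\geq d$, via Remark \ref{rmk-nEB}(i) and the note after the definition) correctly supplies the one detail the paper leaves implicit, and it is consistent with the paper's own conventions, e.g.\ Case (1) in the proof of Theorem \ref{thm-nEB+mEB}, where being EB is likewise read as being $j$-EB for an index $j$ possibly exceeding $d$.
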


\begin{cor}\label{cor-nEB+2EB-2}
 Let $2\leq k\leq d$ and $\Phi:\M{d}\to\M{d}$ be a $k$-entanglement breaking CP-map. Then $\Phi^m$ is entanglement breaking, where $m=\min\{SN(\Phi),\lceil\frac{d-1}{k-1}\rceil\}$.
\end{cor}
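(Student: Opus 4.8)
The plan is to apply Corollary~\ref{cor-nEB+2EB-1} to the iterated composition $\Phi^m=\Phi\circ\Phi\circ\cdots\circ\Phi$ ($m$ factors), and separately to exploit the Schmidt-number-reducing property from Theorem~\ref{thm-2EB-char} to extract the second candidate for $m$. Since $\Phi$ is $k$-entanglement breaking with $2\leq k\leq d$, each factor in the composition is $k$-EB, so taking $n_i=k$ for all $1\leq i\leq m$ in Corollary~\ref{cor-nEB+2EB-1} gives that $\Phi^m$ is $(mk-m+1)$-entanglement breaking, i.e.\ $\big(m(k-1)+1\big)$-EB. Entanglement breaking is equivalent to being $d$-EB, so it suffices to find $m$ with $m(k-1)+1\geq d$; solving this inequality yields $m\geq\frac{d-1}{k-1}$, and the smallest such integer is $m=\lceil\frac{d-1}{k-1}\rceil$. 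This establishes that $\Phi^{\lceil(d-1)/(k-1)\rceil}$ is EB.

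Next I would handle the bound $m=SN(\Phi)$. The idea is that each application of a $2$-EB-type map strictly reduces the Schmidt number of a non-EB map, via Theorem~\ref{thm-2EB-char}. More precisely, since $k\geq 2$, $\Phi$ is in particular $2$-entanglement breaking (by Remark~\ref{rmk-nEB}(i)). Set $r_0=SN(\Phi)$ and consider the sequence $SN(\Phi^{j})$ for $j=1,2,\dots$. As long as $\Phi^{j}$ is not yet entanglement breaking (equivalently $SN(\Phi^{j})>1$), I would argue that composing once more with the $2$-EB map $\Phi$ strictly decreases the Schmidt number: by Theorem~\ref{thm-2EB-char}$(ii)$ applied with the non-EB CP-map $\Psi=\Phi^{j}$, we get $SN(\Phi^{j+1})=SN(\Phi\circ\Phi^{j})<SN(\Phi^{j})$. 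Since $SN(\Phi^{1})=SN(\Phi)=r_0$ and the sequence is a strictly decreasing sequence of positive integers bounded below by $1$, it must reach $1$ after at most $r_0-1$ further steps, i.e.\ $SN(\Phi^{r_0})=1$, so $\Phi^{SN(\Phi)}$ is entanglement breaking.

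Combining the two bounds, $\Phi^m$ is entanglement breaking for $m=\min\{SN(\Phi),\lceil\frac{d-1}{k-1}\rceil\}$, as claimed, since if $\Phi^{m_0}$ is EB then $\Phi^{m}$ is EB for all $m\geq m_0$ (composing an EB map on the left keeps it EB, by Remark~\ref{rmk-nEB}(ii) with the left factor being CP). The one technical point to verify carefully is the applicability of Theorem~\ref{thm-2EB-char}$(ii)$ in the iteration: that result requires the domain dimension $m$ of $\Psi$ to satisfy $2\leq m\leq d_2$, and here all maps are endomorphisms of $\M{d}$ with $d_1=d_2=d$, so $\Psi=\Phi^{j}:\M{d}\to\M{d}$ has domain dimension $d$, which lies in the required range $2\leq d\leq d=d_2$; thus the hypothesis is met. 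I expect this dimension bookkeeping—ensuring each non-EB intermediate map legitimately triggers the strict Schmidt-number drop—to be the only place demanding care, the rest being a direct substitution into Corollary~\ref{cor-nEB+2EB-1} and an elementary monotonicity argument.
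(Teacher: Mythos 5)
Your proposal is correct and follows essentially the same two-pronged argument as the paper: the bound $\lceil\frac{d-1}{k-1}\rceil$ comes from Corollary~\ref{cor-nEB+2EB-1} with all $n_i=k$, and the bound $SN(\Phi)$ comes from iterating the strict Schmidt-number decrease of Theorem~\ref{thm-2EB-char}, using that a $k$-EB map with $k\geq 2$ is $2$-EB. Your write-up merely makes explicit the descending-integer iteration and the dimension check that the paper leaves implicit.
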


\begin{proof}
 Let $k_1=SN(\Phi)$. Since $\Phi$ is $2$-EB, by Theorem \ref{thm-2EB-char},  $SN(\Phi^{k_1})=1$.  Meanwhile, if $k_2$ is such that $kk_2-k_2+1=d$, then Corollary \ref{cor-nEB+2EB-1}  implies that  $SN(\Phi^{k_2})=1$. Now we conclude that $SN(\Phi^m)=1$, where $m=\min\{SN(\Phi),\lceil\frac{d-1}{k-1}\rceil\}$. 
\end{proof}

\section{Majorization}\label{sec-Maj}
 Given $x=(x_1,x_2,\cdots,x_d)^{\T}\in\mbb{R}^d$, we denote by $x^{\downarrow }=(x_1^\downarrow,x_2^\downarrow,\cdots,x_d^\downarrow)^{\T}\in \mathbb {R} ^{d}$ the vector with the same components, but sorted in descending order. Thus, $x_1^\downarrow\geq x_2^\downarrow\geq \cdots\geq x_d^\downarrow$. Given $x,y\in\mbb{R}^d$, we say that $x$ is \emph{weakly majorized} by $y$ (and write $x\prec_w y$), if 
 \begin{align*}
     \sum _{i=1}^{k}x_i^{\downarrow }\leq\sum _{i=1}^{k}y_{i}^{\downarrow }\qquad\mbox{ for all  }1\leq k\leq d.
 \end{align*}
 If $x\prec_w y$ and $\sum_{i=1}^dx_i=\sum_{i=1}^dy_i$, then we say that $x$ is \emph{majorized} by $y$, and write $x\prec y$.  If the dimensions of $x$ and $y$ are different, we define $x\prec y$ and $x\prec_w y$ similarly by appending extra zeros to the smaller vector to equalize their dimensions. If $A,B$ are two positive matrices, then we say $A$ is \emph{(weakly) majorized} by $B$ if $\sigma(A)$ is (weakly) majorized by $\sigma(B)$, where $\sigma(X)\in\mbb{R}^d$ denotes the vector of all eigenvalues of $X\in\M{d}^+$ arranged in the decreasing order. 
 
  A matrix $D=[d_{ij}]\in\M{d}$ is said to be \emph{doubly sub-stochastic} if $d_{ij}\geq 0$ for all $1\leq i,j\leq d$ and  
 \begin{align*}
    \sum_{i=1}^d d_{ij}\leq 1 \quad\forall~1\leq j\leq d
    \qquad\mbox{and}\qquad
    \sum_{j=1}^d d_{ij}\leq 1 \quad\forall~1\leq i\leq d.
 \end{align*}
 In the above, if $\sum_{i=1}^d d_{ij}=\sum_{j=1}^d d_{ij}=1$, then $D$ is called \emph{doubly stochastic}. It is well-known that $x\prec y$ (resp. $x\prec_w y$) if and only if $x=Dy$ for some doubly stochastic (resp. sub-stochastic) matrix $D\in\M{d}$. 

 It is known (\cite{NiKe01,Hir03}) that $X$ is majorized by both $\tr_1(X)$ and $\tr_2(X)$ whenever $X\in (\M{d}\otimes\M{d})^+$ is separable. Equivalently, if $\Phi:\M{d}\to\M{d}$ is an EB-map, then $C_\Phi$ is majorized by both $\tr_1(C_\Phi)$ and $\tr_2(C_\Phi)$. We prove an analogue of this result for $k$-EB maps, and the proof is almost same as that in \cite{Hir03}.  

 Given $X\in(\M{d}\otimes\M{d})^+=\B{\mbb{C}^d\otimes\mbb{C}^d}^+$ decompose $\mbb{C}^d=\ker{\tr_2(X)}^\perp\bigoplus\ker{\tr_2(X)}$. Then with respect to the decomposition 
 \begin{align*}
    \mbb{C}^d\otimes\mbb{C}^d
    =\Big(\ker{\tr_2(X)}^\perp\otimes\mbb{C}^d\Big)\bigoplus\Big(\ker{\tr_2(X)}\otimes\mbb{C}^d\Big)
 \end{align*}
 we can write 
 \begin{align}\label{eq-X-decomp-1}
    X=\sMatrix{X_1&0\\0&0},
 \end{align}
 where $X_1$ acts on $\ker{\tr_2(X)}^\perp\otimes\mbb{C}^d$. See \cite[Lemma 2]{Hir03} for details. 

 The following is a generalization of \cite[Theorem 1]{Hir03}. 

\begin{thm}
 Let $X\in(\M{d}\otimes\M{d})^+$ and $k\geq 1$.
 \begin{enumerate}[label=(\roman*)]
     \item If $(\id_{d}\otimes W_{-\frac{1}{k},\T})(X)\in (\M{d}\otimes\M{d})^+$, then $X$ is weakly majorized by $k\tr_2(X)$.
     \item If $(W_{-\frac{1}{k},\T}\otimes \id_{d})(X)\in (\M{d}\otimes\M{d})^+$, then $X$ is weakly majorized by $k\tr_1(X)$.
 \end{enumerate}
 (Here $W_{-\frac{1}{k},\T}$ is given by  \eqref{eq-Werner-modified}.)
\end{thm}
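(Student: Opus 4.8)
The plan is to follow Hiroshima's argument \cite{Hir03} for separable states, inserting the constant $k$ that the Werner‑type hypothesis produces. I will prove (i); statement (ii) is obtained from it by swapping the two tensor legs (conjugating $X$ by the flip operator turns $(W_{-\frac1k,\T}\otimes\id_d)(X)$ into $(\id_d\otimes W_{-\frac1k,\T})$ of the flipped matrix and interchanges $\tr_1$ with $\tr_2$). Writing $\rho:=\tr_2(X)$, the first step is to unfold the hypothesis. Since $W_{-\frac1k,\T}(Y)=\tr(Y)I_d-\frac1k Y^{\T}$ (so that $W_{-\frac1k,\T}$ is the Holevo--Werner map $\mcl{W}_{1/k}$), a direct computation gives
\[
  (\id_d\otimes W_{-\tfrac1k,\T})(X)=\bigl(\rho\otimes I_d\bigr)-\tfrac1k(\id_d\otimes\T)(X),
\]
so the hypothesis is the operator inequality $\tfrac1k(\id_d\otimes\T)(X)\le\rho\otimes I_d$. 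Using the block form \eqref{eq-X-decomp-1} I would reduce to the case that $\rho$ is invertible: both $X$ and $\rho$ vanish on $\ker{\rho}\otimes\mbb{C}^d$, the hypothesis restricts to the top‑left block, and weak majorization is insensitive to appended zeros, so it suffices to treat $X_1$ on $\ker{\rho}^\perp\otimes\mbb{C}^d$.

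Next I would normalize. Set $Y:=(\rho^{-1/2}\otimes I_d)X(\rho^{-1/2}\otimes I_d)$, which satisfies $\tr_2(Y)=I$. Because conjugation by $\rho^{-1/2}\otimes I_d$ commutes with the partial transpose on the second leg, conjugating the hypothesis yields the clean bound $\tfrac1k(\id_d\otimes\T)(Y)\le I$, i.e.\ $\lambda_{\max}\!\bigl((\id_d\otimes\T)(Y)\bigr)\le k$, while $Y\ge0$. By Ky Fan's maximum principle the goal $X\prec_w k\rho$ is equivalent to the family of partial‑sum inequalities $\sum_{i=1}^m\lambda_i^{\downarrow}(X)\le k\sum_{i=1}^m\lambda_i^{\downarrow}(\rho)$ for $1\le m\le d^2$, so the whole problem reduces to estimating $\tr(XP)$ where $P$ is the spectral projection onto the $m$ largest eigenvalues of $X$.

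For the core estimate I would diagonalize $X=\sum_\alpha\xi_\alpha\ranko{u_\alpha}{u_\alpha}$ and $\rho=\sum_\beta\mu_\beta\ranko{v_\beta}{v_\beta}$ and, following \cite{Hir03}, build from this joint spectral data a nonnegative transfer matrix $D=[D_{\alpha\beta}]$ whose entries measure the overlap of the eigenvectors $u_\alpha$ with the cylinder subspaces $v_\beta\otimes\mbb{C}^d$, weighted by $\xi_\alpha/\mu_\beta$. The identities $\sum_\alpha\xi_\alpha\ranko{u_\alpha}{u_\alpha}=X$ and $\langle v_\beta,\rho v_\beta\rangle=\mu_\beta$ are designed to give the relation $\xi=D(k\mu)$ together with column sums $\le 1$; once $D$ is shown to be doubly substochastic, the standard characterization of weak majorization ($\xi=D(k\mu)$ with $D$ doubly substochastic $\Rightarrow\xi\prec_w k\mu$) finishes the proof.

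The step I expect to be the real obstacle is the remaining substochasticity bound, namely the row‑sum estimate, together with the reconciliation of the partial transpose. The row sums involve $\langle u_\alpha,(\rho^{-1}\otimes I_d)u_\alpha\rangle$, and a bare Cauchy--Schwarz estimate only produces a lower bound; the hypothesis must therefore be used in its full operator form (not merely through the diagonal blocks $X_{\beta\beta}\le k\mu_\beta I_d$ that it immediately yields) and played against $X\ge0$ to get the correct constant $k$. The partial transpose is the genuinely delicate ingredient here: it is harmless on the majorizing side, since $\tr_2\bigl((\id_d\otimes\T)(X)\bigr)=\tr_2(X)=\rho$, but controlling $\tr(XP)$ through the transposed inequality $\tfrac1k(\id_d\otimes\T)(X)\le\rho\otimes I_d$ requires feeding in the positivity of $X$ carefully, and making this interplay precise with the sharp constant $k$ is the crux of the argument.
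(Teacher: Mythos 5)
Your proposal is a plan rather than a proof: the step you yourself call ``the crux'' --- the row-sum estimate in which the transposed hypothesis must be played against $X\ge 0$ --- is exactly the content that is missing, and it cannot be filled in. Your faithful unfolding of the stated hypothesis, $\tfrac1k(\id_d\otimes\T)(X)\le \tr_2(X)\otimes I_d$, is genuinely too weak to imply $X\prec_w k\,\tr_2(X)$. Concretely, take $X=\ranko{\Omega_d}{\Omega_d}$. Then $(\id_d\otimes\T)(X)=\Delta_d$ is the swap operator, whose eigenvalues are $\pm 1$, while $\tr_2(X)=I_d$; hence your inequality holds for every $k\ge 1$, yet $\lambda_{\max}(X)=d$, so the first partial-sum inequality in $X\prec_w k\,\tr_2(X)$ fails for every $k<d$. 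So the obstacle you flagged is not a technical difficulty awaiting a sharper estimate: no argument starting from the transposed inequality can close the gap, because that implication is false.

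The paper's own proof succeeds precisely because it never uses the partial transpose. Its opening display asserts $(\tr_2(X)\otimes I_d)-\frac{X}{k}=(\id_d\otimes W_{-\frac1k,\T})(X)$, an identity that is correct only for the reduction-type map $Y\mapsto \tr(Y)I_d-\tfrac1k Y$ (i.e.\ for $W_{-\frac1k,\id}$, not for $W_{-\frac1k,\T}$ as defined in \eqref{eq-Werner-modified}). In other words, the hypothesis the paper actually works with --- and under which the conclusion is true --- is the transpose-free operator inequality $X\le k(\tr_2(X)\otimes I_d)$. From there the argument is Hiroshima's: a Douglas-type factorization $X^{1/2}=\sqrt{k}\,(\tr_2(X)^{1/2}\otimes I_d)C$ with $\norm{C}\le 1$, a unitary $U$ diagonalizing $X$, and an explicit doubly sub-stochastic matrix built from the entries of $R=CU$, giving $\sigma(X)\prec_w k\,\sigma(\tr_2(X))$. (The application in the next theorem is consistent with this reading: since $\T\circ\Phi$ is $k$-EB whenever $\Phi$ is, the dual-cone argument yields $(\id_d\otimes W_{-\frac1r,\id})(C_\Phi)\ge 0$ directly.) So the correct repair of your plan is not a cleverer row-sum bound but a restatement of the hypothesis without the transpose; once you replace $\tfrac1k(\id_d\otimes\T)(X)\le\tr_2(X)\otimes I_d$ by $\tfrac1k X\le\tr_2(X)\otimes I_d$, your normalization $Y=(\tr_2(X)^{-1/2}\otimes I_d)X(\tr_2(X)^{-1/2}\otimes I_d)$ together with your transfer-matrix scheme becomes, in essence, the paper's (Hiroshima's) proof.
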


\begin{proof}
 Because of \eqref{eq-X-decomp-1}, without loss of generality, we can assume $\tr_2(X)$ is invertible and diagonal, say $\tr_2(X)=\sum_{i=1}^d\alpha_iE_{ii}$, where $\alpha_i\in(0,\infty)$. Since 
 $$(\tr_2(X)\otimes I_d)-\frac{X}{k}=(\id_d\otimes W_{-\frac{1}{k},\T})(X) \geq 0,$$
 by \cite[Lemma 1]{Hir03}, there exists a $C\in\M{d}\otimes\M{d}$ with $\norm{C}\leq 1$ such that 
 \begin{align}\label{eq-sqrt-X}
     X^{\frac{1}{2}}&=\sqrt{k}(\tr_2(X)^{\frac{1}{2}}\otimes I_d)C.
 \end{align}
 Suppose $\sigma(X)=(\lambda_{11},\lambda_{12},\cdots,\lambda_{1d},\cdots,\lambda_{d1},\cdots,\lambda_{dd})^{\T}\in\mbb{R}^{d^2}$. Let  $U\in\M{d}\otimes\M{d}$ be a unitary such that 
 \begin{align}\label{eq-UXU}
    U^*X^{\frac{1}{2}}U
    =diag\Big(\sqrt{\lambda_{11}},\sqrt{\lambda_{12}},\cdots,\sqrt{\lambda_{1d}},\cdots,\sqrt{\lambda_{d1}},\cdots,\sqrt{\lambda_{dd}}\Big).
 \end{align}
 Let $R=CU$. Then from \eqref{eq-sqrt-X}, \eqref{eq-UXU} we  get
 \begin{align}
      X&=k (\tr_2(X)^{\frac{1}{2}}\otimes I_d)CC^*(\tr_2(X)^{\frac{1}{2}}\otimes I_d)           \label{eq-X}\\
   diag (\sqrt{\lambda_{11}},\sqrt{\lambda_{12}},\cdots \sqrt{\lambda_{dd}})
     &=\sqrt{k}U^*(\tr_2(X)^{\frac{1}{2}}\otimes I_d)R            \\ 
 diag (\lambda_{11},\lambda_{12},\cdots,\lambda_{dd})
    &=k R^*(\tr_2(X)\otimes I_d)R.    \label{eq-lambda-ij}
\end{align}
 From \eqref{eq-X}, for all $1\leq i,j\leq d$, we have
 \begin{align}\label{eq-X-entries}
  \ip{e_i\otimes e_j,X e_i\otimes e_j} 
        &=k\bip{e_i\otimes e_j,(\tr_2(X)^{\frac{1}{2}}\otimes I_d)CC^*(\tr_2(X)^{\frac{1}{2}}\otimes I_d) (e_i\otimes e_j)} \notag \\
        &=k\bip{(\tr_2(X)^{\frac{1}{2}}\otimes I_d)(e_i\otimes e_j),CC^*(\tr_2(X)^{\frac{1}{2}}\otimes I_d) (e_i\otimes e_j)}  \notag \\
        &=k\bip{\sqrt{\alpha_i}e_i\otimes e_j,CC^*(\sqrt{\alpha_i}e_i\otimes e_j)} \notag \\
        &=k\alpha_i\bip{e_i\otimes e_j, CC^*(e_i\otimes e_j)} \notag \\
        &=k\alpha_i\Bip{e_i\otimes e_j,C\Big(\sum_{p,q}\bip{e_p\otimes e_q,C^*(e_i\otimes e_j)}e_p\otimes e_q\Big)} \notag \\
        &=k\alpha_i\sum_{p,q=1}^d\abs{\bip{e_i\otimes e_j,C(e_p\otimes e_q)}}^2
 \end{align} 
  Similarly, from \eqref{eq-lambda-ij}, we get 
  \begin{align}\label{eq-lambda-ij-1}
       \lambda_{ij}=k\sum_{p,q=1}^d\alpha_p\abs{\bip{e_p\otimes e_q,R(e_i\otimes e_j)}}^2
  \end{align}
  Now, from the definition of $\tr_2(X)$ and \eqref{eq-X-entries},   we have 
  \begin{align*}
     &\alpha_p=\ip{e_p,\tr_2(X)e_p}
                   =\sum_{l=1}^d\bip{e_p\otimes e_l,X(e_p\otimes e_l)}  
                   =k\alpha_p\sum_{l,m,n=1}^d\abs{\bip{e_p\otimes e_l,C(e_m\otimes e_n)}}^2,\\
    i.e.,\quad &\frac{1}{k}=\sum_{l,m,n=1}^d\abs{\bip{e_p\otimes e_l,C(e_m\otimes e_n)}}^2               
  \end{align*}
  for all $1\leq p\leq d$. Let $S=[S_{ij}]\in\M{d}$ where 
  \begin{align*}
     S_{ij}=\sum_{q=1}^d\abs{\bip{e_j\otimes e_q,R(e_1\otimes e_i)}}^2\qquad\forall~1\leq i,j\leq d.
  \end{align*}
  Clearly $S_{ij}\geq 0$. Further,
  \begin{align*}
    \sum_{i=1}^d S_{ij}
    =\sum_{i,q=1}^d\abs{\bip{e_j\otimes e_q,R(e_1\otimes e_i)}}^2
     \leq \sum_{q,m,n=1}^d \abs{\ip{e_j\otimes e_q, R(e_m\otimes e_n)}}^2
     =\frac{1}{k}
      <1   
  \end{align*}
 for all $1\leq j\leq d$. Also since $\norm{R}\leq 1$ 
  \begin{align*}
      \sum_{j=1}^d S_{ij}
      =\sum_{j,q=1}^d \abs{\ip{e_j\otimes e_q, R(e_1\otimes e_i)}}^2
      =\bip{e_1\otimes e_i,R^*R(e_1\otimes e_i)}
      \leq 1
  \end{align*}
  for all $1\leq i\leq d$.  Thus $S$ is a doubly sub-stochastic matrix. Now from \eqref{eq-lambda-ij-1},
  \begin{align*}
     \Matrix{\lambda_{11}\\ \lambda_{12}\\\vdots\\\lambda_{1d}}=S\Matrix{k\alpha_1\\ k\alpha_2\\\vdots\\k\alpha_d},
  \end{align*}
  and hence $(\lambda_{11},\cdots,\lambda_{1d})^{\T}\prec_w(k\alpha_1,k\alpha_2,\cdots,k\alpha_d)^{\T}$ i.e.,
  \begin{align}\label{eq-maj-1}
       \sum_{j=1}^n\lambda_{1j}\leq\sum_{i=1}^nk\alpha_i\qquad\forall~1\leq n\leq d.
  \end{align}
   Note that 
  \begin{align}\label{eq-maj-2}
      \sum_{i,j=1}^d\lambda_{ij}=\tr(X)=\tr(\tr_2(X))=\sum_{i=1}^d\alpha_i\leq \sum_{i=1}^dk\alpha_i.
  \end{align}
  Take $\alpha_i=0$ for all $d<i\leq d^2$ and $(\beta_1,\beta_2,\cdots,\beta_{d^2})^{\T}=(\lambda_{11},\lambda_{12},\cdots,\lambda_{1d},\cdots,\lambda_{d1},\cdots,\lambda_{dd})^{\T}$. Then  for all  for all $d<n\leq d^2$ we have
  \begin{align}\label{eq-maj-3}
      \sum_{i=1}^{n}\beta_i\leq\sum_{i,j=1}^d\lambda_{ij}=\sum_{i=1}^d\alpha_i=\sum_{i=1}^{n}\alpha_i.
  \end{align}
  Now from \eqref{eq-maj-1}, \eqref{eq-maj-2} and \eqref{eq-maj-3} we conclude that $\sigma(X)\prec_w\sigma(k\tr_2(X))$. Similarly we can prove that $\sigma(X)\prec_w\sigma(k\tr_1(X))$.  
\end{proof}

\begin{thm}
Let $\Phi:\M{d}\to\M{d}$ be a $k$-EB map, where $1\leq k\leq d$. Then $C_{\Phi}$ is weakly majorized by both $(d-k+1)\tr_1(C_\Phi)$ and $(d-k+1)\tr_2(C_{\Phi})$. 
\end{thm}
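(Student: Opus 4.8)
The plan is to deduce the statement from the preceding (unlabelled) majorization theorem, whose two hypotheses, read off from its proof, are the positivities
$$\tr_2(C_\Phi)\otimes I_d-\tfrac1n\,C_\Phi\ge 0\qquad\text{and}\qquad I_d\otimes\tr_1(C_\Phi)-\tfrac1n\,C_\Phi\ge 0,$$
where I set $n:=d-k+1$. Since the assertion concerns the \emph{positive} matrix $C_\Phi$ (and its partial traces), I read the theorem for $C_\Phi\in(\M d\otimes\M d)^+$, i.e.\ for $\Phi$ completely positive; this is exactly the meaningful case. First I would bound the Schmidt number of $C_\Phi$. Because $\Phi$ is CP it is in particular $d$-positive, so Lemma \ref{lem-CMW19} applies with the roles ``$\Phi$ is $d$-positive and $k$-entanglement breaking'' (legitimate as $k\le d$); evaluating its conclusion at $m=d$ and $X=\ranko{\Omega_d}{\Omega_d}$ gives
$$SN(C_\Phi)=SN\big((\id_d\otimes\Phi)(\ranko{\Omega_d}{\Omega_d})\big)\le\max\{d-k+1,1\}=n.$$

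Next I would identify the map occurring in the criterion as the generalized reduction map $\mcl R(Y):=\tr(Y)I_d-\tfrac1n Y=\mcl W_{1/n}\circ\T$ and show that it is $n$-positive. Here I would invoke Theorem \ref{thm-Wer-Hol-nEB}: the Holevo--Werner map $\mcl W_{1/n}$ is $n$-PPT (since $1/n\in[-1,1/n]$), which by definition says that both $\mcl W_{1/n}$ and $\mcl W_{1/n}\circ\T=\mcl R$ are $n$-positive. It is worth stressing that $\mcl R$ is \emph{only} $n$-positive and not CP once $k\ge 2$, so this step carries genuine content and must not be short-circuited by treating $\mcl R$ as a completely positive map.

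With these two facts in hand I would combine them through Lemma \ref{lem-k-pos}: since $\mcl R$ is $n$-positive and $SN(C_\Phi)\le n$, the implication (i)$\Rightarrow$(ii) of that lemma (with $m=d$) yields $(\id_d\otimes\mcl R)(C_\Phi)\ge 0$, that is $\tr_2(C_\Phi)\otimes I_d-\tfrac1n C_\Phi\ge 0$. Conjugating by the swap unitary, which preserves Schmidt number, gives the left-leg version $(\mcl R\otimes\id_d)(C_\Phi)\ge 0$, i.e.\ $I_d\otimes\tr_1(C_\Phi)-\tfrac1n C_\Phi\ge 0$. Feeding these into the two parts of the preceding majorization theorem, taken with parameter $n$, delivers precisely $C_\Phi\prec_w n\,\tr_2(C_\Phi)$ and $C_\Phi\prec_w n\,\tr_1(C_\Phi)$, which is the claim with $n=d-k+1$.

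The hard part will be the Schmidt-number bound $SN(C_\Phi)\le d-k+1$. The delicate point is that Lemma \ref{lem-CMW19} controls $SN\big((\id_m\otimes\Phi)(X)\big)$ only for $m\le\min\{(\text{positivity level of }\Phi),d\}$; to reach the value $m=d$ that produces $C_\Phi$ one must genuinely use that $\Phi$ is CP (hence $d$-positive), not merely $k$-positive, and this is exactly where the completely positive hypothesis is indispensable. A secondary subtlety is correctly matching the map in the criterion to the reduction map $\mcl R$ and certifying its $n$-positivity through the $n$-PPT property of $\mcl W_{1/n}$, rather than through complete positivity, which would both trivialize and invalidate the argument.
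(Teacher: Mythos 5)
Your proof is correct, but it reaches the crucial positivity condition by a genuinely different route than the paper. The paper argues on the dual side: setting $r=d-k+1$, it notes that $W_{-1/r,\T}=\mcl{W}_{1/r}$ is an $r$-entanglement breaking CP-map (Theorem \ref{thm-Werner-modified}), applies Lemma \ref{lem-CMW19} to this \emph{Werner map} (not to $\Phi$) to conclude it is $k$-PEB, hence lies in $\mcl{EB}_k(d)^\circ$, and then invokes the duality Theorem \ref{thm-dual-cone} to deduce that $\mcl{W}_{1/r}\circ\Phi=(\Phi^*\circ \mcl{W}_{1/r})^*$ is CP, i.e.\ $(\id_d\otimes \mcl{W}_{1/r})(C_\Phi)\geq 0$, which is the hypothesis of the preceding theorem \emph{as stated}. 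You instead apply Lemma \ref{lem-CMW19} to $\Phi$ itself to get $SN(C_\Phi)\leq n=d-k+1$, certify that $\mcl{R}=\mcl{W}_{1/n}\circ\T$ is $n$-positive via the $n$-PPT part of Theorem \ref{thm-Wer-Hol-nEB}, and combine the two through Lemma \ref{lem-k-pos} (plus a swap for the first leg). The two arguments exploit the same Schmidt-number/block-positivity duality, but in opposite directions: the paper pairs $\Phi$ against an element of the dual cone, while you pair the Choi matrix, whose Schmidt number you bound, against an $n$-positive map; your version dispenses with the mapping-cone formalism ($\mcl{PEB}_k\subseteq\mcl{EB}_k(d)^\circ$ and Theorem \ref{thm-dual-cone}) entirely, at the price of needing the CP hypothesis one step earlier.

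Two points of detail both come out in your favour. First, your explicit restriction to $\Phi$ CP loses nothing relative to the paper: the preceding majorization theorem is stated only for $X\in(\M{d}\otimes\M{d})^+$ (and the paper defines weak majorization only for positive matrices), so the paper's proof also tacitly requires $C_\Phi\geq 0$ when it feeds $C_\Phi$ into that theorem; you merely need complete positivity already at the application of Lemma \ref{lem-CMW19} with $m=d$. Second, the preceding theorem has an internal mismatch: its statement assumes $(\id_d\otimes W_{-1/r,\T})(X)\geq 0$, i.e.\ $\tr_2(X)\otimes I_d-\frac{1}{r}(\id_d\otimes\T)(X)\geq 0$, while its proof actually uses $\tr_2(X)\otimes I_d-\frac{1}{r}X\geq 0$. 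The paper's duality argument verifies the stated (partially transposed) hypothesis, whereas your argument produces exactly the untransposed positivity that the Hiroshima-style proof consumes, so your route interfaces correctly with the proof rather than with the misstated hypothesis.
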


\begin{proof}
 Let $r=d-k+1$. By Theorem \ref{thm-Werner-modified}, $W_{-\frac{1}{r},\T}$ on $\M{d}$ is  $(d-k+1)$-entanglement breaking CP-map, and  hence, by Lemma \ref{lem-CMW19}, $W_{-\frac{1}{r},\T}$ is  $k$-PEB map. From Theorem \ref{thm-nEB-char}\ref{nEB-tr} it follows that $k$-PEB maps are in the dual of $k$- EB maps. Hence, by Theorem \ref{thm-dual-cone}, $\mcl{W}_{-\frac{1}{r},\T}\circ\Phi=(\Phi^*\circ \mcl{W}_{-\frac{1}{r},\T})^*$ is CP. Thus we have $(I_d\otimes \mcl{W}_{-\frac{1}{r},\T})(C_{\Phi})\geq 0$. Hence by the above theorem we get  $\frac{1}{d-k+1} C_{\Phi}$ is weakly majorized by $\tr_2(C_{\Phi})$. Similarly $C_{\Phi}$ is is weakly majorized by $(d-k+1)\tr_2(C_{\Phi})$ also. 
\end{proof}

\section{Discussion}\label{sec-disc}

  It is known  (\cite{YLT16}) that if $d_1d_2\leq 6$, then every $2$-positive map $\Gamma:\M{d_1}\to\M{d_2}$ is decomposable. But when $4\leq\max\{d_1,d_2\}\leq 9$ (\cite[page 18]{CYT17}) or $d_1,d_2\geq 10$ (\cite{HLLM18, BhOs20}), then there exists $2$-positive map which is not decomposable.   \\

\noindent\textbf{Open Problem 1.} Is every $2$-EB map $\Phi:\M{d_1}\to\M{d_2}$ decomposable? 

\begin{thm}
 Suppose the Problem-1 has an affirmative answer. If $\Phi:\M{d}\to\M{d}$ is a  $2$-entanglement breaking PPT-map, then $\Phi^2$ is entanglement breaking.
\end{thm}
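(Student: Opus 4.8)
The plan is to reduce the claim to the characterization of entanglement breaking maps in Theorem \ref{thm-EBmap-char}. Since $\Phi^2$ is a CP-map (it is the composition of the CP-map $\Phi$ with itself, using that PPT-maps are CP), condition (iii) of that theorem tells me that it suffices to prove $\Gamma\circ\Phi^2\in\mcl{CP}(d)$ for every positive map $\Gamma:\M{d}\to\M{d}$. So I would fix an arbitrary $\Gamma\in\mcl{P}(d,d)$ and set $\Lambda:=\Gamma\circ\Phi$.

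First I would observe that $\Lambda$ is $2$-entanglement breaking. Indeed $\Phi$ is $2$-EB by hypothesis, and by Remark \ref{rmk-nEB}(ii) the cone $\mcl{EB}_2(d,d)$ is left-invariant under composition by positive maps, so $\Lambda=\Gamma\circ\Phi$ is $2$-EB. At this point I invoke the standing assumption that Problem 1 has an affirmative answer: every $2$-EB map is decomposable. Hence I may write $\Lambda=\Lambda_1+\Lambda_2$, where $\Lambda_1$ is a CP-map and $\Lambda_2$ is a co-CP-map.

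The key step is then to expand $\Gamma\circ\Phi^2=\Lambda\circ\Phi=\Lambda_1\circ\Phi+\Lambda_2\circ\Phi$ and to check that each summand is CP. The term $\Lambda_1\circ\Phi$ is a composition of two CP-maps, hence CP. For $\Lambda_2\circ\Phi$ I would use precisely the PPT hypothesis: $\Phi$ is co-CP, and the composition of two co-CP-maps is CP. Concretely, writing $\Lambda_2=\T\circ M$ and $\Phi=\T\circ\Phi'$ with $M,\Phi'$ CP, one has $\Lambda_2\circ\Phi=(\T\circ M\circ\T)\circ\Phi'$, and $\T\circ M\circ\T$ is CP because its Choi-Kraus operators are the entrywise conjugates of those of $M$; thus $\Lambda_2\circ\Phi$ is a composition of two CP-maps, hence CP. Consequently $\Gamma\circ\Phi^2$ is a sum of two CP-maps and therefore CP.

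Since $\Gamma\in\mcl{P}(d,d)$ was arbitrary, Theorem \ref{thm-EBmap-char}(iii) applied to the CP-map $\Phi^2$ yields that $\Phi^2$ is entanglement breaking, completing the argument. I expect the main conceptual point to be the realization that the co-CP summand $\Lambda_2$ produced by the decomposition, once composed with the co-CP map $\Phi$, turns back into a CP-map; this is the only place where the PPT (equivalently co-CP) hypothesis on $\Phi$ is genuinely used, and it is exactly what makes the decomposability supplied by Problem 1 combine cleanly with the PPT condition. It is worth noting that this route does not even require the stronger fact that $\Phi^2$ is $3$-EB coming from Theorem \ref{thm-nEB+mEB}; the decomposability of $\Gamma\circ\Phi$ together with the identity ``co-CP $\circ$ co-CP $=$ CP'' does all the work.
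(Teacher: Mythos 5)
Your proposal is correct and follows essentially the same route as the paper's proof: both fix an arbitrary positive $\Gamma$, use left-invariance of $\mcl{EB}_2$ to see $\Gamma\circ\Phi$ is $2$-EB, invoke the affirmative answer to Problem 1 to decompose it as CP plus co-CP, and then use that the co-CP summand composed with the co-CP map $\Phi$ is CP, concluding via Theorem \ref{thm-EBmap-char}(iii). The only differences are cosmetic: the paper argues by contradiction and writes the co-CP part as $\Phi_2\circ\T$ so that $\Phi_2\circ(\T\circ\Phi)$ is immediately a composition of CP-maps, while you argue directly and verify the identity ``co-CP $\circ$ co-CP $=$ CP'' by hand.
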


\begin{proof} 
  Suppose $\Phi^2$ is not an EB-map. Then there exists a $\Gamma\in\mcl{P}(d)$ such that $\Gamma\circ\Phi^2$ is not CP. Note that $\Gamma\circ\Phi$ is $2$-EB and hence decomposable. So there exist $\Phi_1,\Phi_2\in\mcl{CP}(d)$ such that $\Gamma\circ\Phi=\Phi_1+\Phi_2\circ\T$. Hence
 \begin{align*}
      \Gamma\circ\Phi^2 
                                        =(\Phi_1+\Phi_2\circ\T)\circ\Phi
                                        =\Phi_1\circ\Phi+\Phi_2\circ(\T\circ\Phi)
 \end{align*}
 is CP, which is a contradiction. 
\end{proof}
 
\begin{eg}
  If $\lambda\in [-1,\frac{1}{2}]$, then the Holevo-Werner map $\mcl{W}_\lambda$ (and hence $\T\circ\mcl{W}_\lambda$) is 2-EB. Note that $\mcl{W}_\lambda$ (resp. $\T\circ\mcl{W}_\lambda$) is a CP-map (resp. co-CP), and hence decomposable.
\end{eg}
 
 \begin{eg} 
 Let $\lambda\in [\frac{-1}{4},1]$. Then the map $\Phi_{\lambda,d}:\M{d}\to\M{d}$ given by \eqref{eq-Phi-lambda} is $2$-EB.  If $\lambda\in [\frac{-1}{4},\frac{1}{2}]$, then $\Phi_{\lambda,d}=\frac{1}{2}\big(\mcl{W}_{-2\lambda}+\mcl{W}_{-2\lambda}\circ\T\big)$, hence we conclude that $\Phi_{\lambda,d}$ is  decomposable.  Now if $\lambda\in [\frac{1}{2},1]$, then $\Phi_{\lambda,d}$ is EB, and in particular decomposable.    
 \end{eg}

\section*{Acknowledgments}
 We are grateful to Alexander  M\"{u}ller-Hermes for his valuable comments, providing a proof of a part of  the Theorem \ref{thm-nEB-dual-cone}, and pointing out the reference \cite{ChCh20}. We thank B. V. Rajarama Bhat for his comments. RD is supported by UGC (University of Grant Commission, India) with ref No 21/06/2015(i)/EU-V. NM thanks NBHM (National Board for Higher Mathematics, India) for financial support with ref No 0204/52/2019/R\&D-II/321. KS is partially supported by the IoE-CoE Project (No. SB20210797MAMHRD008573) from MHRD (Ministry of Human Resource Development, India) and partially by the MATRIX grant (File no. MTR/2020/000584) from SERB (Science and Engineering Research Board, India).

\appendix

\section{}
 Let $a,b,c\in\mbb{C}$ and define $\Phi:\M{d}\to\M{d}$ by $\Phi(X)=a\tr(X)I+bX+cX^{\T}$. We want to know for what values of $a,b,c\in\mbb{C}$ the map $\Phi$ is EB; equivalently when the corresponding  Choi matrix 
  $$a(I_d\otimes I_d)+b\ranko{\Omega_d}{\Omega_d}+c\Delta_d\in\M{d}\otimes\M{d}$$
  is separable? Here  $\Delta_d=(\id_d\otimes\T)\ranko{\Omega_d}{\Omega_d}=\sum_{i,j=1}^dE_{ij}\otimes E_{ji}$. We answer this through similar lines as Stormer (\cite{Sto13}) proved the entanglement property of the map $\tr(\cdot)\Gamma(I)+\Gamma(\cdot)$, where $\Gamma\in\mcl{P}(d)$. Through out we let $G$ denotes the compact group 
 \begin{align*}
  G=\{Ad_{U\otimes U}: U\in \M{d}\mbox{ real orthogonal}\}\subseteq\M{d}\otimes\M{d}=\M{d}(\M{d}),
 \end{align*} 
 where $d>1$. Let $\mu$ denotes the normalized Haar measure on $G$ and let
 \begin{align*}
    Fix(G)=\{A\in\M{d}\otimes\M{d}: Ad_{U\otimes U}(A)=A\mbox{ for all real orthogonal matrices }U\in\M{d} \}.
 \end{align*} 
 Clearly $I_d\otimes I_d\in Fix(G)$. Observe that, for every $x=(x_1,\cdots,x_d)^{\T},y=(y_1,\cdots,y_d)^{\T}\in\mbb{C}^d$
 \begin{align*}
   \Delta_d(x\otimes y)=\sum_{ij}x_je_i\otimes y_ie_j=(\sum_i y_ie_i)\otimes (\sum_j x_je_j)=y\otimes x.
 \end{align*}
 Consequently, $Ad_{U\otimes U}(\Delta_d)(x\otimes y)=y\otimes x=\Delta_d(x\otimes y)$ for every  real orthogonal matrices $U\in\M{d}$.  In other words,  $\Delta_d\in Fix(G)$. Note that 
 \begin{align*}
     Ad_{U\otimes U}(\ranko{\Omega_d}{\Omega_d})
          &=Ad_{U\otimes U}\big((\id_d\otimes \T)\Delta_d\big)  \\
          &=(\id_d\otimes \T)\big(Ad_{U\otimes U}(\Delta_d)\big)  \\
          &=(\id_d\otimes \T)(\Delta_d)  \\
          &=\ranko{\Omega_d}{\Omega_d}, 
 \end{align*}
 for every  real orthogonal matrices $U\in\M{d}$. Thus $\ranko{\Omega_d}{\Omega_d}\in Fix(G)$.
 
\begin{lem}\label{lem-App-d=2}
 Let $d=2$. Then $Fix(G)=\{\alpha(I_2\otimes I_2)+\beta\ranko{\Omega_2}{\Omega_2}+\gamma\Delta_2: \alpha,\beta,\gamma\in\mbb{C}\}$. 
\end{lem}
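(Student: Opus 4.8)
The inclusion $\supseteq$ is immediate from the computations preceding the lemma, which show that $I_2\otimes I_2$, $\ranko{\Omega_2}{\Omega_2}$ and $\Delta_2$ all lie in $Fix(G)$; these three matrices are linearly independent, as one checks at once by comparing entries in the $\M{2}(\M{2})$ picture (for instance $I_2\otimes I_2=\sum_{i,k}E_{ii}\otimes E_{kk}$, $\ranko{\Omega_2}{\Omega_2}=\sum_{i,j}E_{ij}\otimes E_{ij}$, $\Delta_2=\sum_{i,j}E_{ij}\otimes E_{ji}$). So the content of the lemma is the reverse inclusion, and for this it suffices to prove $\dim_{\mbb{C}} Fix(G)\leq 3$. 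The first step I would take is to reinterpret $Fix(G)$ as a commutant: since $U$ is real orthogonal, $(U\otimes U)^*=U^\T\otimes U^\T=U^{-1}\otimes U^{-1}$, so the equation $Ad_{U\otimes U}(A)=A$ is equivalent to $A(U\otimes U)=(U\otimes U)A$. Hence $Fix(G)$ is exactly the commutant inside $\M{2}\otimes\M{2}$ (a $16$-dimensional space) of the set $\{U\otimes U: U\in\M{2}\text{ real orthogonal}\}$.

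Next I would exploit that the real orthogonal group $O(2)$ is generated by the rotations $R_\theta$ together with a single reflection $S=\mathrm{diag}(1,-1)$. Thus $A\in Fix(G)$ if and only if $[A,R_\theta\otimes R_\theta]=0$ for all $\theta$ and $[A,S\otimes S]=0$. The first family of conditions is equivalent to $[A,N]=0$, where $N=J\otimes I_2+I_2\otimes J$ and $J=\Matrix{0&-1\\1&0}$ is the infinitesimal generator of the rotations. Diagonalizing, $J$ has eigenvalues $\pm i$, so $N$ has eigenvalues $2i,0,0,-2i$, with a $2$-dimensional kernel $\ker{N}$. A matrix commuting with $N$ is block-diagonal with respect to the eigenspaces of $N$: a scalar on each of the two simple eigenspaces and an arbitrary $2\times 2$ block on $\ker{N}$. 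This already gives that the rotations alone leave a $1+1+4=6$-dimensional fixed space (the $SO(2)$-commutant).

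Finally I would impose the reflection. Since $SJS^{-1}=-J$, conjugation by $S\otimes S$ sends $N$ to $-N$, hence interchanges the $\pm 2i$ eigenspaces of $N$ and preserves $\ker{N}$, acting there as an involution. Commuting with $S\otimes S$ therefore forces the two scalar blocks on the $\pm 2i$ eigenspaces to coincide (reducing $2$ parameters to $1$) and forces the $2\times 2$ block on $\ker{N}$ to lie in the commutant of a non-scalar involution of $\M{2}$ (reducing $4$ parameters to $2$). This yields $\dim_{\mbb{C}} Fix(G)=1+2=3$. Together with the three linearly independent elements already exhibited, this proves $Fix(G)=\{\alpha(I_2\otimes I_2)+\beta\ranko{\Omega_2}{\Omega_2}+\gamma\Delta_2:\alpha,\beta,\gamma\in\mbb{C}\}$.

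The main obstacle is the reverse inclusion, i.e. ruling out extra fixed points. The delicate point is not the rotations but the reflections: the rotations by themselves leave a $6$-dimensional commutant, and it is precisely the reflections in $O(2)$ — present because $U$ ranges over \emph{all} real orthogonal matrices rather than merely $SO(2)$ — that cut this down to the claimed dimension $3$. The step that needs to be verified with care is the action of $S\otimes S$ on $\ker{N}$: I must confirm that it restricts to a non-scalar involution there (equivalently, that $S\otimes S$ genuinely swaps the two rotation-invariant lines in $\ker{N}$), since this is what justifies the final $4\to 2$ reduction and hence the sharp bound.
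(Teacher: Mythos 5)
Your argument is correct and takes a genuinely different route from the paper's. The paper works directly on the entries of $A=[a_{ij}]\in\M{2}(\M{2})$: invariance under $Ad_{U\otimes U}$ for $U=\sMatrix{1&0\\0&-1}$ forces eight entries to vanish, invariance for the coordinate swap $U=\sMatrix{0&1\\1&0}$ forces the identifications $a_{33}=a_{22}$, $a_{44}=a_{11}$, $a_{32}=a_{23}$, $a_{41}=a_{14}$, and invariance under the rotation $U=\frac{1}{\sqrt 2}\sMatrix{1&1\\-1&1}$ kills the one remaining spurious parameter, the coefficient of $E_{11}\otimes E_{11}+E_{22}\otimes E_{22}$. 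You instead identify $Fix(G)$ with the commutant of $\{U\otimes U: U\in O(2)\}$, compute the $SO(2)$-commutant via the infinitesimal generator $N=J\otimes I_2+I_2\otimes J$, $J=\sMatrix{0&-1\\1&0}$ (eigenvalues $2i,0,0,-2i$, hence a $6$-dimensional commutant), and then let the reflection $S=\mathrm{diag}(1,-1)$ cut this down to dimension $3$; together with the three exhibited linearly independent fixed elements this yields the lemma. Your route is more structural — it explains why the answer is $3$-dimensional (the passage from $SO(2)$ to $O(2)$) — while the paper's is entirely elementary and self-contained, its only input being three well-chosen orthogonal matrices.

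The one point you flagged but did not verify — that $S\otimes S$ restricts to a \emph{non-scalar} involution on $\ker{N}$ — does hold, and it is the crux: were it scalar, the bound would only be $1+4=5$. The verification is one line. Take the $J$-eigenvectors $v_{\pm}=(\pm i,1)^{\T}$, so $Jv_{\pm}=\pm i\,v_{\pm}$ and $\ker{N}=\lspan\{v_+\otimes v_-,\,v_-\otimes v_+\}$. Then $Sv_{\pm}=(\pm i,-1)^{\T}=-v_{\mp}$, hence $(S\otimes S)(v_+\otimes v_-)=(-v_-)\otimes(-v_+)=v_-\otimes v_+$; that is, $S\otimes S$ genuinely swaps the two rotation-invariant lines in $\ker{N}$. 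Its restriction to $\ker{N}$ therefore has both eigenvalues $+1$ and $-1$ (on $v_+\otimes v_-\pm v_-\otimes v_+$), so its commutant inside $\mathrm{End}(\ker{N})\cong\M{2}$ is $2$-dimensional, which is exactly the $4\to 2$ reduction your count needs. With this inserted, $\dim_{\mbb{C}}Fix(G)\leq 1+2=3$ is justified and your proof is complete.
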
 
 
\begin{proof}
 Let $A=[a_{ij}]\in Fix(G)\subseteq\M{2}(\M{2})$. Consider $U=\sMatrix{1&0\\0&-1}\in\M{2}$. Then $Ad_{U\otimes U}(A)=A$ implies that 
 \begin{align*}
    A=\Matrix{a_{11}&0&0&a_{14}\\
                      0&a_{22}&a_{23}&0\\
                      0&a_{32}&a_{33}&0\\
                      a_{41}&0&0&a_{44}
                      }.
 \end{align*}
 Similarly considering $U=\sMatrix{0&1\\1&0}$ we conclude that  
 \begin{align*}
   A&=\Matrix{a_{11}&0&0&a_{14}\\
                    0&a_{22}&a_{23}&0\\
                    0&a_{23}&a_{22}&0\\
                    a_{14}&0&0&a_{11}\\
                    }
      =\Matrix{a_0&&&\\&a_{22}&&\\&&a_{22}&\\&&&a_0}+
        a_{14}\Matrix{1&0&0&1\\0&0&0&0\\0&0&0&0\\1&0&0&1}+
        a_{23}\Matrix{1&0&0&0\\0&0&1&0\\0&1&0&0\\0&0&0&1}\\
      &=(a_0-a_{22})(E_{11}\otimes E_{11}+E_{22}\otimes E_{22})+a_{22}(I_2\otimes I_2)+a_{14}(\ranko{\Omega_2}{\Omega_2})+a_{23}\Delta_2,
 \end{align*}
 where $a_0=a_{11}-(a_{23}+a_{14})$. Since $A,I_2\otimes I_2,\Delta_2,\ranko{\Omega_2}{\Omega_2}$ are in $Fix(G)$, from above  we get $B=(a_0-a_{22})(E_{11}\otimes E_{11}+E_{22}\otimes E_{22})\in Fix(G)$. Take $U=\frac{1}{\sqrt{2}}\sMatrix{1&1\\-1&1}$. Then $Ad_{U\otimes U}(B)=B$ implies that $a_0-a_{22}=0$. Therefore, $A=a_{22}(I_2\otimes I_2)+a_{14}(\ranko{\Omega_2}{\Omega_2})+a_{23}\Delta_2$.
\end{proof} 
 
\begin{lem}\label{lem-App-d>2}
 Let $d>2$. Given $m\neq n$ let $$P_{\{m,n\}}=\sum_{i,j\in\{m,n\}}\ranko{e_i\otimes e_j}{e_i\otimes e_j}$$ be the projection of $\mbb{C}^d\otimes\mbb{C}^d$ onto $\lspan\{e_i\otimes e_j: i,j\in\{m,n\}\}\cong\mbb{C}^2\otimes\mbb{C}^2$. Then for every $A\in Fix(G)$ there exist unique $\alpha_{mn},\beta_{mn},\gamma_{mn}\in\mbb{C}$ such that 
  $$P_{\{m,n\}}AP_{\{m,n\}}^*=\alpha_{mn}(I_2\otimes I_2)+\beta_{mn}\ranko{\Omega_2}{\Omega_2}+\gamma_{mn}\Delta_2.$$
\end{lem}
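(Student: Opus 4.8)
The plan is to reduce the case $d>2$ to the already-settled case $d=2$ (Lemma~\ref{lem-App-d=2}) by locating a copy of the $2\times 2$ real orthogonal group inside the real orthogonal matrices of $\M{d}$ and using it to conjugate the compression $P_{\{m,n\}}AP_{\{m,n\}}^*$. Fix $m\neq n$ and set $V:=\lspan\{e_m,e_n\}$, so that the range of $P_{\{m,n\}}$ is exactly $V\otimes V=\lspan\{e_i\otimes e_j: i,j\in\{m,n\}\}$. First I would consider those real orthogonal $W\in\M{d}$ that preserve $V$ and act as the identity on $V^\perp=\lspan\{e_i:i\neq m,n\}$. Each such $W$ is block diagonal, carrying a $2\times 2$ real orthogonal block $\wtilde{W}$ on the $\{m,n\}$-coordinates and the identity elsewhere, and conversely every $\wtilde{W}$ in $O(2)$ arises in this way.

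The key observation is that for such $W$ the unitary $W\otimes W$ maps $V\otimes V$ into itself, and therefore commutes with the orthogonal projection $P_{\{m,n\}}$. Combining this commutation with the hypothesis $A\in Fix(G)$, which gives $(W\otimes W)^*A(W\otimes W)=A$, I would deduce that the compression $B:=P_{\{m,n\}}AP_{\{m,n\}}^*$ satisfies $(W\otimes W)^*B(W\otimes W)=B$ for every such $W$. Identifying $V\otimes V$ with $\mbb{C}^2\otimes\mbb{C}^2$ via $e_m\mapsto e_1,\ e_n\mapsto e_2$, the restriction of $W\otimes W$ to $V\otimes V$ becomes $\wtilde{W}\otimes\wtilde{W}$ and $B$ becomes a matrix $\wtilde{B}\in\M{2}\otimes\M{2}$.

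Since $\wtilde{W}$ ranges over all of $O(2)$ as $W$ varies, the invariance $\wtilde{B}=(\wtilde{W}\otimes\wtilde{W})^*\wtilde{B}(\wtilde{W}\otimes\wtilde{W})$ says precisely that $\wtilde{B}$ lies in the fixed-point set treated in Lemma~\ref{lem-App-d=2}. Applying that lemma produces scalars $\alpha_{mn},\beta_{mn},\gamma_{mn}\in\mbb{C}$ with $\wtilde{B}=\alpha_{mn}(I_2\otimes I_2)+\beta_{mn}\ranko{\Omega_2}{\Omega_2}+\gamma_{mn}\Delta_2$, which is the assertion after translating back through the identification. Uniqueness of the scalars follows once I check that $I_2\otimes I_2$, $\ranko{\Omega_2}{\Omega_2}$, and $\Delta_2$ are linearly independent in $\M{2}\otimes\M{2}$; inspecting, for instance, the $(2,2)$, $(1,4)$, and $(2,3)$ entries of these three matrices separates them immediately. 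The step requiring the most care is the commutation $P_{\{m,n\}}(W\otimes W)=(W\otimes W)P_{\{m,n\}}$ together with verifying that $W\otimes W$ restricts exactly to $\wtilde{W}\otimes\wtilde{W}$ on $V\otimes V$, so that the full orthogonal group $O(2)$ is recovered on the two-dimensional factor; this is routine linear algebra rather than a genuine difficulty.
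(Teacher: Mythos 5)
Your proposal is correct and follows essentially the same route as the paper: both embed $O(2)$ into the real orthogonal group of $\M{d}$ via block-diagonal matrices $\sMatrix{\wtilde{W}&0\\0&I}$ acting on $\lspan\{e_m,e_n\}$, observe that the resulting $W\otimes W$ respects the decomposition so the invariance of $A$ compresses to invariance of $P_{\{m,n\}}AP_{\{m,n\}}^*$, and then invoke Lemma \ref{lem-App-d=2} together with linear independence of $I_2\otimes I_2$, $\ranko{\Omega_2}{\Omega_2}$, $\Delta_2$ for uniqueness. Your write-up is in fact slightly more careful than the paper's, since you make explicit the commutation $P_{\{m,n\}}(W\otimes W)=(W\otimes W)P_{\{m,n\}}$ and the entry-wise check of linear independence, both of which the paper leaves implicit.
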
 

\begin{proof}
 Let $U\in\M{2}$ be a real orthogonal matrix. Then $U$ induce a linear map on $\mcl{W}=\lspan\{e_m,e_n\}$. Decompose $\mbb{C}^d=\mcl{W}\oplus\mcl{W}^\perp$ and write $A=\sMatrix{A_{mn,11}&A_{mn,12}\\A_{mn,21}&A_{mn,22}}\in\M{d}(\M{d})$, where $A_{mn,11}\in\M{2}\otimes\M{2}$.  Consider the real orthogonal matrix $\wtilde{U}=\sMatrix{U&0\\0&I_{\mcl{W}^\perp}}\in\M{d}$. Then
 \begin{align*}
   Ad_{\wtilde{U}\otimes\wtilde{U}}(A)=A
      &\Longrightarrow Ad_U(A_{mn,11})=A_{mn,11}.
 \end{align*}
 Since $U\in\M{2}$ is arbitrary, from Lemma \ref{lem-App-d=2}, we get 
 \begin{align*}
    A_{mn,11}=\alpha_{mn}(I_2\otimes I_2)+\beta_{mn}\ranko{\Omega_2}{\Omega_2}+\gamma_{mn}\Delta_2
 \end{align*}
 where $\alpha_{mn},\beta_{mn},\gamma_{mn}\in\mbb{C}$. But, $A_{mn,11}=P_{\{m,n\}}AP_{\{m,n\}}^*$, where
 \begin{align*}
    P_{\{m,n\}}=\sum_{i,j\in\{m,n\}}\ranko{e_i\otimes e_j}{e_i\otimes e_j},
 \end{align*}
  the projection of $\mbb{C}^d\otimes\mbb{C}^d$ onto $\lspan\{e_i\otimes e_j: i,j\in\{m,n\}\}$. Uniqueness of $\alpha_{mn},\beta_{mn},\gamma_{mn}$ follows from the linear independence of $I_2\otimes I_2, \ranko{\Omega_2}{\Omega_2}, \Delta_2$.
\end{proof}

\begin{prop}
 Let $d>2$. Then $Fix(G)=\{\alpha(I_d\otimes I_d)+\beta\ranko{\Omega_d}{\Omega_d}+\gamma\Delta_d: \alpha,\beta,\gamma\in\mbb{C}\}$. 
\end{prop}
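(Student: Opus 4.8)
The plan is to establish the nontrivial inclusion $Fix(G)\subseteq\{\alpha(I_d\otimes I_d)+\beta\ranko{\Omega_d}{\Omega_d}+\gamma\Delta_d:\alpha,\beta,\gamma\in\mbb{C}\}$; the reverse inclusion was already recorded in the discussion preceding Lemma \ref{lem-App-d=2}, and the linear independence of $I_d\otimes I_d,\ranko{\Omega_d}{\Omega_d},\Delta_d$ (used in Lemma \ref{lem-App-d>2}) guarantees uniqueness of the coefficients. So fix $A=[a_{(ij),(kl)}]\in Fix(G)$, where rows and columns of $A\in\M{d}(\M{d})$ are indexed by the pairs $(i,j)$ attached to $e_i\otimes e_j$.

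First I would locate the nonzero entries of $A$ using sign symmetries. For $U=\mathrm{diag}(\epsilon_1,\dots,\epsilon_d)$ with $\epsilon_i\in\{-1,+1\}$, which is real orthogonal, we have $(U\otimes U)(e_i\otimes e_j)=\epsilon_i\epsilon_j\,(e_i\otimes e_j)$, so $Ad_{U\otimes U}(A)=A$ forces $a_{(ij),(kl)}=\epsilon_i\epsilon_j\epsilon_k\epsilon_l\,a_{(ij),(kl)}$ for all sign choices. Hence $a_{(ij),(kl)}=0$ unless every value occurs an even number of times in the multiset $\{i,j,k,l\}$. As four slots can be partitioned into two equal pairs only as $i=j,\,k=l$ or $i=k,\,j=l$ or $i=l,\,j=k$, the only possibly nonzero entries are $a_{(ii),(ii)}$, $a_{(ii),(kk)}\ (i\neq k)$, $a_{(ij),(ij)}\ (i\neq j)$, and $a_{(ij),(ji)}\ (i\neq j)$. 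In particular, every surviving entry involves at most two distinct indices, hence sits inside a block $P_{\{m,n\}}AP_{\{m,n\}}^*$, and every entry straddling three or more distinct indices vanishes.

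Next I would render these entries constant. Every permutation matrix $U$ is real orthogonal, and $Ad_{U\otimes U}$ permutes the index pairs accordingly. Because $S_d$ is $2$-transitive on $\{1,\dots,d\}$, invariance of $A$ under all such maps yields scalars $\alpha,\beta,\gamma,\delta$ with $a_{(ij),(ij)}=\alpha$ and $a_{(ij),(ji)}=\gamma$ for all $i\neq j$, $a_{(ii),(kk)}=\beta$ for all $i\neq k$, and $a_{(ii),(ii)}=\delta$ for all $i$. To fix the relation among these constants, I would read Lemma \ref{lem-App-d>2} blockwise: comparing the entries of the $4\times4$ matrix $P_{\{m,n\}}AP_{\{m,n\}}^*=\alpha_{mn}(I_2\otimes I_2)+\beta_{mn}\ranko{\Omega_2}{\Omega_2}+\gamma_{mn}\Delta_2$ with the constants above gives $\alpha_{mn}=\alpha$, $\beta_{mn}=\beta$, $\gamma_{mn}=\gamma$, and in particular $\delta=a_{(mm),(mm)}=\alpha_{mn}+\beta_{mn}+\gamma_{mn}=\alpha+\beta+\gamma$.

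Finally I would match entries. A direct computation shows that $\alpha(I_d\otimes I_d)+\beta\ranko{\Omega_d}{\Omega_d}+\gamma\Delta_d$ has entry $\alpha$ at $a_{(ij),(ij)}\ (i\neq j)$, entry $\gamma$ at $a_{(ij),(ji)}\ (i\neq j)$, entry $\beta$ at $a_{(ii),(kk)}\ (i\neq k)$, entry $\alpha+\beta+\gamma$ at $a_{(ii),(ii)}$, and $0$ elsewhere, which agrees with $A$ entry by entry; thus $A=\alpha(I_d\otimes I_d)+\beta\ranko{\Omega_d}{\Omega_d}+\gamma\Delta_d$. The delicate point, and the one I expect to be the main obstacle, is justifying that the block coefficients $\alpha_{mn},\beta_{mn},\gamma_{mn}$ are genuinely independent of the chosen pair $\{m,n\}$; this is precisely where the full permutation group (beyond the two-coordinate rotations used for $d=2$ in Lemma \ref{lem-App-d=2}) is needed, in tandem with the sign argument that annihilates all entries involving three or more distinct indices.
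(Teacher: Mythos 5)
Your proof is correct and follows essentially the same route as the paper's: diagonal sign orthogonal matrices to annihilate every entry in which some index occurs with odd multiplicity, permutation-type orthogonal matrices to equalize the surviving entries across index pairs, and Lemma \ref{lem-App-d>2} to pin down the $2\times 2$-block structure and the relation $\delta=\alpha+\beta+\gamma$. The differences are only organizational — you run the sign and permutation symmetries uniformly on raw entries, where the paper argues case by case and uses a partial-transpose manipulation for the $a_{(ii),(kk)}$ entries — but the ingredients and logic coincide.
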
  

\begin{proof}
 Let $A\in Fix(G)$. Assume that $A=\sum_{m,n,p,q}a_{(m,p),(n,q)}E_{mp\otimes E_{nq}}$. Thus,
 \begin{align*}
     \bip{e_m\otimes e_n, A(e_p\otimes e_q)}&=a_{(m,p),(n,q)}\\
     \bip{e_m\otimes e_n, (\id_d\otimes\T)A(e_p\otimes e_q)}&=\bip{e_m\otimes e_q,A(e_p\otimes e_n)},
 \end{align*}
 for all $1\leq m,n,,p,q\leq d$. Now let $m,n,p,q\in\{1,2,\cdots,d\}$.\\ 
   \ul{\textsf{Case (1):}} $m\neq n$ or $p\neq q$.\\
 \ul{\textsf{Subcase (i):}} $\{m,n\}\neq \{p,q\}$. 
  Suppose $m,n,p,q$ are distinct. Given $r\in\{m,n,p,q\}$ define the real orthogonal matrix 
   \begin{align}\label{eq-U_r}
      U_r=\Big(\sum_{\substack{i=1,\cdots,d\\ i\neq r}}\ranko{e_i}{e_i}\Big)-\ranko{e_r}{e_r}\in\M{d}.
   \end{align}   
 Note that $U(e_r)=-e_r$ and $U(e_i)=e_i$ for all $i\neq r$. Since  $Ad_{U_r\otimes U_r}(A)=A$, we get 
 \begin{align*}
     \bip{e_m\otimes e_n, A(e_p\otimes e_q)}    
       =\bip{U_re_m\otimes U_re_n, A(U_re_p\otimes U_re_q)} 
       =-\bip{e_m\otimes e_n, A(e_p\otimes e_q)}.
 \end{align*} 
  Hence $a_{(m,p),(n,q)}=0$.   Now suppose $m=p$ or $m=q$ or $n=p$ or $n=q$. With out loss of generality assume that $m=p$. Since $\{m,n\}\neq\{p,q\}$ we have $n\neq q$. Fix $r\in\{n,q\}$ and define the real orthogonal matrix $U_r\in\M{d}$ by \eqref{eq-U_r}. Then  also
  \begin{align*}
  \bip{e_m\otimes e_n, A(e_p\otimes e_q)}    
       =\bip{U_re_m\otimes U_re_n, A(U_re_p\otimes U_re_q)} 
       =-\bip{e_m\otimes e_n, A(e_p\otimes e_q)}
 \end{align*} 
 so that $a_{(m,p),(n,q)}=0$. \\   
 \ul{\textsf{Subcase (ii):}} $\{m,n\}=\{p,q\}$. 
 From Lemma \ref{lem-App-d>2}, there exists scalars $\alpha_{mn},\beta_{mn},\gamma_{mn}\in\mbb{C}$ such that  
 \begin{align*}
      P_{\{m,n\}}AP_{\{m,n\}}^*
        =\alpha_{mn}(I_2\otimes I_2)+\beta_{mn}\ranko{\Omega_2}{\Omega_2}+\gamma_{mn}\Delta_2.
    \end{align*}
    Therefore,
 \begin{align*}
     a_{(m,p),(n,q)}
      &= \bip{e_m\otimes e_n, A(e_p\otimes e_q)}  \notag\\
      &= \bip{P_{\{m,n\}}^*e_m\otimes e_n, AP_{\{m,n\}}^*(e_p\otimes e_q)}  \notag\\
      &= \bip{e_m\otimes e_n, P_{\{m,n\}}AP_{\{m,n\}}^*(e_p\otimes e_q)} \notag\\
      &= \bip{e_m\otimes e_n, \big(\alpha_{mn}(I_2\otimes I_2)+\beta_{mn}\ranko{\Omega_2}{\Omega_2}+\gamma_{mn}\Delta_2\big)(e_p\otimes e_q)}\\
      &=\begin{cases}
                                  \alpha_{mn} & \mbox{ if } p=m\neq n=q\\
                                  \gamma_{mn} & \mbox{ if } q=m\neq n=p.
                           \end{cases} 
 \end{align*}
  \ul{\textsf{Case (2):}} $m=n$ and $p=q$. 
  If $m=n=p=q$, then choose any $r\neq m$. Then, by Lemma \ref{lem-App-d>2},  
  \begin{align*}
       a_{(m,p),(n,q)}
         & =\bip{e_m\otimes e_m,A(e_m\otimes e_m)}\\
         &=\bip{e_m\otimes e_m,P_{rm}AP_{rm}^*(e_m\otimes e_m)}\\
         &=\alpha_{rm}\bip{e_m\otimes e_m,e_m\otimes e_m}+\beta_{rm}\bip{e_m\otimes e_m,\ranko{\Omega_2}{\Omega_2}(e_m\otimes e_m)}\\
         &\qquad+\gamma_{rm}\bip{e_m\otimes e_m,\Delta_2(e_m\otimes e_m)}\\
         &=\alpha_{rm}+\beta_{rm}+\gamma_{rm}.
  \end{align*} 
  Thus $a_{(m,m),(m,m)}=\alpha_{rm}+\beta_{rm}+\gamma_{rm}$ for any $r\neq m$. 
  Now suppose $m=n\neq p=q$. Note that $n\neq p$ and $m\neq q$ but $\{m,q\}=\{n,p\}$.  Hence
  \begin{align*}
      a_{(m,p),(n,q)}
        &=\bip{e_m\otimes e_n,A(e_p\otimes e_q)}\\
        &=\bip{e_m\otimes e_q,(\id_d\otimes \T)(A)(e_p\otimes e_n)}\\
        &=\bip{e_m\otimes e_q,P_{mq}(\id\otimes \T)(A)P_{mq}^*(e_p\otimes e_n)}\\
        &=\bip{e_m\otimes e_q,(\id\otimes \T)P_{mq}(A)P_{mq}^*(e_p\otimes e_n)}\\
        &=\alpha_{mq}\bip{e_m\otimes e_q,e_p\otimes e_n}+\beta_{mq}\bip{e_m\otimes e_q,\Delta_2(e_p\otimes e_n)}\\
        &\qquad+\gamma_{mq}\{e_m\otimes e_q,\ranko{\Omega_2}{\Omega_2}(e_p\otimes e_n)\}\\
        &=\beta_{mq}.
  \end{align*}
  Thus
  \begin{align}\label{eq-A-coef-1}
      a_{(m,p),(n,q)}
       =\begin{cases}
             \alpha_{rm}+\beta_{rm}+\gamma_{rm} & \mbox{ if } m=n=p=q \mbox{ (where } r\neq m)\\
             \alpha_{mn} &\mbox{ if } p=m\neq n=q\\
             \beta_{mq} &\mbox{ if } m=n\neq p=q\\
             \gamma_{mn} &  \mbox{ if } q=m\neq n=p\\
              0 & \mbox{ otherwise}.
         \end{cases}
  \end{align} 
   Now given any $m\neq n$ and $p\neq q$ consider a real orthogonal matrix $U\in\M{d}$ such that $U(e_m)=e_p,U(e_n)=e_q$ and $U^2=I_d$. Clearly, then $U(e_p)=e_m$ and $U(e_q)=e_n$. Further, 
   \begin{align*}
       Ad_{U\otimes U}(A)=A
          &\Longrightarrow P_{\{m,n\}}Ad_{U\otimes U}(A)P_{\{m,n\}}^*=P_{\{m,n\}}AP_{\{m,n\}}^*\\
          &\Longrightarrow \alpha_{pq}=\alpha_{mn}, \beta_{pq}=\beta_{mn},\gamma_{pq}=\gamma_{mn}.
   \end{align*}
  Hence, there exists $\alpha,\beta,\gamma\in\mbb{C}$ such that \eqref{eq-A-coef-1} becomes 
   \begin{align}\label{eq-A-coef-2}
      a_{(m,p),(n,q)}
       =\begin{cases}
             \alpha+\beta+\gamma & \mbox{ if } m=n=p=q \mbox{ (where } r\neq m)\\
             \alpha &\mbox{ if } p=m\neq n=q\\
             \beta &\mbox{ if } m=n\neq p=q\\
             \gamma &  \mbox{ if } q=m\neq n=p\\
              0 & \mbox{ otherwise}.
         \end{cases}
  \end{align}  
  Thus $A=\alpha(I_d\otimes I_d)+\beta(\ranko{\Omega_d}{\Omega_d})+\gamma\Delta_d$.
  \mbox{}\vspace{1cm} 
\end{proof}
 
\begin{defn}
 Define $P:\M{d}\otimes\M{d}\to\M{d}\otimes\M{d}$ by 
 \begin{align*}
     P(A)=\int_G Ad_{U\otimes U}(A)d\mu(U).
 \end{align*}
\end{defn} 
 
\begin{obsr}\label{Obsr-appdx}
 We make the following observations:
 \begin{enumerate}[label=(\roman*)]
       \item  $P$ is a unital positive projection with
                  \begin{align}\label{eq-ranP-1}
                   \ran{P}=Fix(G)=\{\alpha(I_d\otimes I_d)+\beta\ranko{\Omega_d}{\Omega_d}+\gamma\Delta_d: \alpha,\beta,\gamma\in\mbb{C}\},
                    \end{align}
                  Further, $\tr(P(A))=\tr(A)$ for all $A\in\M{d}\otimes\M{d}$.
      \item Let $\msc{C}\subseteq \mcl{P}(d)$ be a mapping cone and let $\Phi\in\msc{C}$. Let $\Psi:\M{d}\to\M{d}$ be the linear map such that $C_{\Psi}=P(C_{\Phi})$. Then for any $\Gamma\in\msc{C}^{\circ}$ we have
   \begin{align*}
    \tr(C_\Psi C_\Gamma)
        &=\tr(P(C_{\Phi})C_\Gamma)\\
        &=\tr\Big(\Big(\int_G Ad_{U\otimes U}(C_{\Phi})d\mu(U)\Big)C_\Gamma\Big)\\
        &=\int_G\tr\big(C_{Ad_U\circ\Phi\circ Ad_{U^*}}C_\Gamma\big)d\mu(U)\\
        &\geq 0,
 \end{align*}
 where the last inequality follows because $Ad_U\circ\Phi\circ Ad_{U^*}\in\msc{C}$. Thus  $\Psi\in {\msc{C}^{\circ\circ}}=\msc{C}$. Since $P$ is a positive projection we conclude that 
 $$P(\{C_{\Phi}:\Phi\in\msc{C}\})=\{C_{\Psi}:\Psi\in\msc{C}\}\bigcap \ran{P}.$$    
 In particular, considering $\msc{C}=\mcl{EB}(d)$, we have      
 \begin{align}\label{eq-ranP-sep}
     P\big( (\M{d}\otimes\M{d})^+_{sep}\big)=(\M{d}\otimes\M{d})^+_{sep}\bigcap\ran{P},
 \end{align}
 where $ (\M{d}\otimes\M{d})^+_{sep}$ denotes the set of separable positive matrices.

   \item Let $X,Y\in\M{d}^+$. Since $Ad_{U\otimes U}(\Delta_d)=\Delta_d$ for every real orthogonal $U\in\M{d}$ we have  
 \begin{align*}
     \tr\big(P(X\otimes Y)\Delta_d\big)
       &=\int_G\tr\big(Ad_{U\otimes U}(X\otimes Y)\Delta_d\big)d\mu(U)    \notag\\
       &=\int_G\tr\big((X\otimes Y)Ad_{U^*\otimes U^*}(\Delta_d)\big)d\mu(U)  \notag\\
       &=\int_G\tr\big((X\otimes Y)\Delta_d\big)d\mu(U)   \notag\\
       &=\tr\big((X\otimes Y)\Delta_d\big).
 \end{align*}
 Similarly,   $\tr\big(P(X\otimes Y)\ranko{\Omega_d}{\Omega_d}\big)=\tr\big((X\otimes Y)\ranko{\Omega_d}{\Omega_d}\big)$.  Thus
 \begin{align}
      \tr\big(P(Z)\Delta_d\big)&=\tr(Z\Delta_d) \label{eq-P-Lambda} \\
      \tr\big(P(Z)\ranko{\Omega_d}{\Omega_d}\big)&=\tr(Z\ranko{\Omega_d}{\Omega_d})\label{eq-P-Omega}
 \end{align}
  for all $Z\in (\M{d}\otimes\M{d})^+_{sep}$.

         \item From \eqref{eq-ranP-1} and \eqref{eq-ranP-sep}, we have $a(I_d\otimes I_d)+b\ranko{\Omega_d}{\Omega_d}+c\Delta_d$ is separable if and only if 
            $$a(I_d\otimes I_d)+b\ranko{\Omega_d}{\Omega_d}+c\Delta_d=P(Z)$$
           for some $Z\in (\M{d}\otimes\M{d})^+_{sep}$. Now since $\Delta_d^2=I_d\otimes I_d, \Delta_d\ranko{\Omega_d}{\Omega_d}=\ranko{\Omega_d}{\Omega_d}=\ranko{\Omega_d}{\Omega_d}\Delta_d$ and $\ranko{\Omega_d}{\Omega_d}^2=d\ranko{\Omega_d}{\Omega_d}$, from \eqref{eq-P-Lambda} and \eqref{eq-P-Omega}, we get
 \begin{align}
    &ad+bd+cd^2=\tr(Z\Delta_d)  \label{eq-1}\\
    &ad+bd^2+cd=\tr(Z\ranko{\Omega_d}{\Omega_d}) \label{eq-2}.
 \end{align}
 Further, $\tr(P(Z))=\tr(Z)$ implies that 
 \begin{align}\label{eq-3}
    ad^2+bd+cd=\tr(Z). 
 \end{align}
 Solving \eqref{eq-1}, \eqref{eq-2} and \eqref{eq-3} for $a,b,c$ we get 
 \begin{align}
    a&=\frac{1}{d^3+d^2-2d}\big((d+1)\tr(Z)-\tr(Z\Delta_d) -\tr(Z\ranko{\Omega_d}{\Omega_d}) \big)     \label{eq-a}\\
    b&=\frac{1}{d^3+d^2-2d}\big((d+1)\tr(Z\ranko{\Omega_d}{\Omega_d})-\tr(Z\Delta_d)-\tr(Z)     \label{eq-c}\\
    c&=\frac{1}{d^3+d^2-2d}\big((d+1)\tr(Z\Delta_d) -\tr(Z\ranko{\Omega_d}{\Omega_d})-\tr(Z)     \label{eq-b}.
 \end{align} 
  \end{enumerate}
\end{obsr}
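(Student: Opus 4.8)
The plan is to establish, in turn, the four assertions about the averaging map $P(A)=\int_G Ad_{U\otimes U}(A)\,d\mu(U)$, using as the main input the preceding Proposition, which identifies $Fix(G)$ with $\{\alpha(I_d\otimes I_d)+\beta\ranko{\Omega_d}{\Omega_d}+\gamma\Delta_d:\alpha,\beta,\gamma\in\mbb{C}\}$, together with the already-noted fact that $\Delta_d$ and $\ranko{\Omega_d}{\Omega_d}$ lie in $Fix(G)$.

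For (i), I would observe that each $Ad_{U\otimes U}$ with $U$ real orthogonal is a unital, trace-preserving, completely positive (hence positive) map, so its $\mu$-average $P$ inherits all three properties; in particular $\tr(P(A))=\int_G\tr(Ad_{U\otimes U}(A))\,d\mu(U)=\tr(A)$. That $P$ is a projection with $\ran{P}=Fix(G)$ follows from invariance of the Haar measure: if $A\in Fix(G)$ then $P(A)=A$, while for an arbitrary $A$ one checks $Ad_{V\otimes V}(P(A))=P(A)$ for every real orthogonal $V$, so $\ran{P}\subseteq Fix(G)$; combined with the Proposition this gives \eqref{eq-ranP-1}.

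For (ii), the crux is that for a unitary $U$ both $Ad_U$ and $Ad_{U^*}$ are CP, so $Ad_U\circ\Phi\circ Ad_{U^*}\in\msc{C}$ whenever $\Phi\in\msc{C}$, combined with the Choi-matrix transformation rule $Ad_{U\otimes U}(C_\Phi)=C_{Ad_U\circ\Phi\circ Ad_{U^*}}$, where the real orthogonality of $U$ (i.e.\ $\ol{U}=U$, $U^*=U^{-1}$) is exactly what makes the conjugation match the pre/post-composition. Pushing $\tr(\,\cdot\,C_\Gamma)$ through the integral then yields $\tr(C_\Psi C_\Gamma)=\int_G\tr(C_{Ad_U\circ\Phi\circ Ad_{U^*}}C_\Gamma)\,d\mu(U)\geq 0$ for every $\Gamma\in\msc{C}^\circ$, whence $\Psi\in(\msc{C}^\circ)^\circ=\msc{C}$; the positivity of $P$ and this containment together give the stated set identity, and $\msc{C}=\mcl{EB}(d)$ specializes it to \eqref{eq-ranP-sep}. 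For (iii), I would use $\Delta_d,\ranko{\Omega_d}{\Omega_d}\in Fix(G)$ and cyclicity of the trace to get $\tr(Ad_{U\otimes U}(X\otimes Y)\Delta_d)=\tr((X\otimes Y)Ad_{U^*\otimes U^*}(\Delta_d))=\tr((X\otimes Y)\Delta_d)$, and then integrate to obtain \eqref{eq-P-Lambda}; the argument for $\ranko{\Omega_d}{\Omega_d}$ is identical.

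Finally, for (iv), I would combine (ii) with $\msc{C}=\mcl{EB}(d)$ to reduce separability of $a(I_d\otimes I_d)+b\ranko{\Omega_d}{\Omega_d}+c\Delta_d$ to the existence of a separable $Z$ with $P(Z)$ equal to this matrix, and then apply \eqref{eq-P-Lambda}, \eqref{eq-P-Omega} and $\tr(P(Z))=\tr(Z)$ against the algebraic relations $\Delta_d^2=I_d\otimes I_d$, $\Delta_d\ranko{\Omega_d}{\Omega_d}=\ranko{\Omega_d}{\Omega_d}=\ranko{\Omega_d}{\Omega_d}\Delta_d$ and $\ranko{\Omega_d}{\Omega_d}^2=d\,\ranko{\Omega_d}{\Omega_d}$ to read off \eqref{eq-1}--\eqref{eq-3}; solving this nonsingular $3\times3$ system (its nonsingularity reflected in the common denominator $d^3+d^2-2d$ of the resulting formulas) gives the closed forms \eqref{eq-a}--\eqref{eq-b}. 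I expect (ii) to be the only genuinely delicate step: one must check the Choi-matrix transformation rule with the correct conjugate/transpose bookkeeping, and then legitimately upgrade the pointwise trace inequalities to the set equality using the biduality $(\msc{C}^\circ)^\circ=\msc{C}$ and the positivity of $P$.
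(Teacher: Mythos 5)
Your proposal is correct and follows essentially the same route as the paper's own inline justifications: averaging over the Haar measure for (i), the Choi-matrix identity $Ad_{U\otimes U}(C_\Phi)=C_{Ad_U\circ\Phi\circ Ad_{U^*}}$ (valid precisely because $U$ is real orthogonal) together with biduality $\msc{C}^{\circ\circ}=\msc{C}$ for (ii), trace cyclicity plus $\Delta_d,\ranko{\Omega_d}{\Omega_d}\in Fix(G)$ for (iii), and the $3\times 3$ linear system for (iv). Your explicit verification of the conjugation/transpose bookkeeping in (ii) is exactly the delicate point the paper leaves implicit, so nothing is missing.
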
 
  
\begin{thm}\label{thm-appdx}
 Let $x,y\in\mbb{C}^d$.  Then 
 \begin{align*}
    P\big(\ranko{x}{x}\otimes\ranko{y}{y}\big)=a(I_d\otimes I_d)+b(\Delta_d+\ranko{\Omega_d}{\Omega_d})
  \end{align*} 
  is separable, where $a,b\in\mbb{R}$ are given by
   \begin{align*}
    a&=\frac{1}{d^3+d^2-2d}\big((d+1)\norm{x}^2\norm{y}^2-\abs{\ip{x,y}}^2-\abs{\ip{x,\ol{y}}}^2 \big)  \\
    b&=\frac{1}{d^3+d^2-2d}\big((d+1)\abs{\ip{x,\ol{y}}}^2-\abs{\ip{x,y}}^2-\norm{x}^2\norm{y}^2\big)   \\
    c&=\frac{1}{d^3+d^2-2d}\big((d+1)\abs{\ip{x,y}}^2-\abs{\ip{x,\ol{y}}}^2-\norm{x}^2\norm{y}^2\big).
 \end{align*}
 (Note that $a\geq 0$.)
\end{thm}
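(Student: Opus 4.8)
The plan is to apply the averaging projection $P$ directly to the rank-one separable matrix and then read off the coefficients $a,b,c$ from the formulas already assembled in Observation~\ref{Obsr-appdx}, so that almost all of the work has been done by the structure theorem for $Fix(G)$ and the projection identities. First I would set
\begin{align*}
   Z := \ranko{x}{x}\otimes\ranko{y}{y} = \ranko{x\otimes y}{x\otimes y},
\end{align*}
which is separable, being a single elementary tensor of positive matrices. Since $P(Z)\in\ran{P}=Fix(G)$, the Proposition guarantees that $P(Z)$ has the form $a(I_d\otimes I_d)+b\ranko{\Omega_d}{\Omega_d}+c\Delta_d$ for some scalars $a,b,c$; and by the identity \eqref{eq-ranP-sep} in Observation~\ref{Obsr-appdx}(ii), $P(Z)$ is again separable. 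This settles the separability assertion immediately, with no computation.

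Next I would pin down $a,b,c$ via the explicit formulas of Observation~\ref{Obsr-appdx}(iv), which express them through the three traces $\tr(Z)$, $\tr(Z\Delta_d)$ and $\tr(Z\ranko{\Omega_d}{\Omega_d})$. Using $\tr(\ranko{\psi}{\psi}A)=\ip{\psi,A\psi}$ together with the appendix identity $\Delta_d(u\otimes v)=v\otimes u$ and $\ip{\Omega_d,x\otimes y}=\sum_i x_iy_i$, these evaluate to
\begin{align*}
   \tr(Z) &= \norm{x}^2\norm{y}^2, \\
   \tr(Z\Delta_d) &= \ip{x\otimes y,\,y\otimes x} = \abs{\ip{x,y}}^2, \\
   \tr\big(Z\ranko{\Omega_d}{\Omega_d}\big) &= \abs{\ip{\Omega_d,x\otimes y}}^2 = \abs{\ip{x,\ol{y}}}^2.
\end{align*}
Substituting these into \eqref{eq-a}, \eqref{eq-c} and \eqref{eq-b} yields exactly the stated expressions for $a$, $b$ and $c$.

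Finally, for the non-negativity of $a$ I would invoke Cauchy--Schwarz: $\abs{\ip{x,y}}^2\leq\norm{x}^2\norm{y}^2$ and $\abs{\ip{x,\ol{y}}}^2\leq\norm{x}^2\norm{\ol{y}}^2=\norm{x}^2\norm{y}^2$, so that
\begin{align*}
   (d+1)\norm{x}^2\norm{y}^2-\abs{\ip{x,y}}^2-\abs{\ip{x,\ol{y}}}^2
   \geq (d-1)\norm{x}^2\norm{y}^2 \geq 0.
\end{align*}
I do not expect a genuine obstacle: the theorem is essentially a corollary of the preparatory results on $Fix(G)$ and $P$. The only point requiring care is the bookkeeping in the two trace evaluations, namely correctly matching $\abs{\ip{x,y}}^2$ with $\Delta_d$ and $\abs{\ip{x,\ol{y}}}^2$ with $\ranko{\Omega_d}{\Omega_d}$ under the paper's inner-product conventions, since an interchange there would swap the roles of $b$ and $c$.
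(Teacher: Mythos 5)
Your proposal is correct and follows essentially the same route as the paper's own proof: invoke \eqref{eq-ranP-sep} for separability, use the $Fix(G)$ structure of $\ran{P}$, and substitute the three traces $\tr(Z)=\norm{x}^2\norm{y}^2$, $\tr(Z\Delta_d)=\abs{\ip{x,y}}^2$, $\tr\big(Z\ranko{\Omega_d}{\Omega_d}\big)=\abs{\ip{x,\ol{y}}}^2$ into \eqref{eq-a}, \eqref{eq-c}, \eqref{eq-b}. Your trace evaluations and the matching of $b$ with $\ranko{\Omega_d}{\Omega_d}$ and $c$ with $\Delta_d$ agree with the paper's conventions, and your Cauchy--Schwarz argument for $a\geq 0$ supplies a detail the paper only asserts.
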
 
 
\begin{proof}
 Let $x,y\in\mbb{C}^d$ and set $X=\ranko{x}{x}$ and $Y=\ranko{y}{y}$. Then, from \eqref{eq-ranP-sep},  $P(X\otimes Y)$ is separable and there exist $a,b,c\in\mbb{C}$ such that  
 $$P(X\otimes Y)=a(I_d\otimes I_d)+b\Delta_d+c\ranko{\Omega_d}{\Omega_d}.$$ 
 From \eqref{eq-a}, \eqref{eq-b} and \eqref{eq-c} we get $a,b,c$ as required.
\end{proof}

\begin{cor}\label{cor-appndx}
 If $\lambda\in[\frac{-1}{d+1},1]$, then $(I_d\otimes I_d)+\lambda(\ranko{\Omega_d}{\Omega_d}+\Delta_d)\in\M{d}\otimes\M{d}$ is separable.
\end{cor}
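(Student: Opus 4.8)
The plan is to realize the target matrix, up to a positive scalar, as the image under the projection $P$ of a separable rank-one tensor $\ranko{x}{x}\otimes\ranko{y}{y}$, invoking Theorem \ref{thm-appdx}. By \eqref{eq-ranP-sep} the map $P$ sends separable matrices to separable matrices, so every $P(\ranko{x}{x}\otimes\ranko{y}{y})$ is separable; Theorem \ref{thm-appdx} moreover identifies it with $a(I_d\otimes I_d)+b\Delta_d+c\ranko{\Omega_d}{\Omega_d}$ for explicit coefficients $a,b,c$ depending on $x,y$. The idea is then to choose $x,y$ so that $b=c$ (forcing the symmetric combination $\Delta_d+\ranko{\Omega_d}{\Omega_d}$) and so that the ratio $b/a$ equals the prescribed value $\lambda$.

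First I would restrict to real vectors $x,y\in\mbb{R}^d\subseteq\mbb{C}^d$. Then $\ol{y}=y$, so $\abs{\ip{x,\ol{y}}}=\abs{\ip{x,y}}$, and the formulas for $b$ and $c$ in Theorem \ref{thm-appdx} coincide, i.e.\ $b=c$. Taking $x,y$ to be unit vectors and putting $t:=\ip{x,y}^2\in[0,1]$ (the inner product being real), the coefficients simplify to
\[
   a=\frac{(d+1)-2t}{d(d-1)(d+2)},\qquad b=c=\frac{dt-1}{d(d-1)(d+2)}.
\]

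Next I would study the function $\lambda(t):=b/a=\dfrac{dt-1}{(d+1)-2t}$ on $[0,1]$. Its denominator is at least $d-1>0$, so $a>0$ throughout, and a one-line quotient-rule computation gives $\lambda'(t)=\dfrac{(d+2)(d-1)}{[(d+1)-2t]^2}>0$. Hence $\lambda$ is strictly increasing, with $\lambda(0)=-\tfrac{1}{d+1}$ and $\lambda(1)=1$, so it attains every value in $[-\tfrac{1}{d+1},1]$. Given a prescribed $\lambda$ in this interval I would select the corresponding $t$ and realize it concretely, e.g.\ $x=e_1$ and $y=\sqrt{t}\,e_1+\sqrt{1-t}\,e_2$. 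Then $P(\ranko{x}{x}\otimes\ranko{y}{y})=a(I_d\otimes I_d)+b(\Delta_d+\ranko{\Omega_d}{\Omega_d})$ is separable, and since $a>0$ and the separable cone is closed under multiplication by positive scalars, dividing through by $a$ shows that $(I_d\otimes I_d)+\lambda(\ranko{\Omega_d}{\Omega_d}+\Delta_d)$ is separable.

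The arithmetic simplification of $a,b,c$ is routine; the only point demanding genuine care is verifying that this single one-parameter family already sweeps out the \emph{entire} interval $[-\tfrac{1}{d+1},1]$ --- that is, checking the monotonicity of $\lambda(t)$ together with the two endpoint values, and confirming $a>0$ so that the final rescaling is legitimate. I do not expect a real obstacle, since the endpoints $t=0$ and $t=1$ land exactly on the two ends of the required interval.
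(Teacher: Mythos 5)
Your proof is correct, and it rests on the same machinery as the paper's: Theorem \ref{thm-appdx} combined with the fact \eqref{eq-ranP-sep} that $P$ carries separable matrices to separable matrices. Your computations check out: for real unit vectors with $t=\ip{x,y}^2$ one gets $b=c=\frac{dt-1}{d(d-1)(d+2)}$ and $a=\frac{(d+1)-2t}{d(d-1)(d+2)}>0$, and $\lambda(t)=\frac{dt-1}{(d+1)-2t}$ runs from $-\frac{1}{d+1}$ at $t=0$ to $1$ at $t=1$. Where you differ from the paper is in how the interval of $\lambda$'s is covered. The paper realizes only the two \emph{endpoints}: it takes $x_1=r_1e_1$, $y_1=r_1e_2$ and $x_2=y_2=r_2e_1$, with the norms $r_1=\big(\frac{d^3+d^2-2d}{d+1}\big)^{1/4}$, $r_2=\big(\frac{d^3+d^2-2d}{d-1}\big)^{1/4}$ calibrated so that $a=1$ exactly, producing the separable matrices $(I_d\otimes I_d)-\frac{1}{d+1}(\ranko{\Omega_d}{\Omega_d}+\Delta_d)$ and $(I_d\otimes I_d)+(\ranko{\Omega_d}{\Omega_d}+\Delta_d)$; an arbitrary $\lambda\in[\frac{-1}{d+1},1]$ is then handled by writing $\lambda$ as a convex combination of $\frac{-1}{d+1}$ and $1$ and using convexity of the separable cone. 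You instead sweep out every $\lambda$ with the one-parameter family $x=e_1$, $y=\sqrt{t}\,e_1+\sqrt{1-t}\,e_2$, invoking monotonicity and the intermediate value theorem, and then rescale by $1/a$. The paper's route is leaner: no calculus, no monotonicity or endpoint check beyond two explicit evaluations, and no rescaling step, since $a=1$ by design of $r_1,r_2$. Yours is marginally more informative: it exhibits every matrix $(I_d\otimes I_d)+\lambda(\ranko{\Omega_d}{\Omega_d}+\Delta_d)$ as a positive multiple of $P$ applied to a \emph{single} product state, rather than as a mixture of two such images; also note that strict monotonicity is not really needed, as continuity of $\lambda(t)$ plus the two endpoint values already suffices. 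Both arguments are complete.
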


\begin{proof}
 Let $x_1=r_1e_1, y_1=r_1e_2, x_2=y_2=r_2e_1$, where $r_1=\Big(\frac{d^3+d^2-2d}{d+1}\Big)^{\frac{1}{4}}$ and $r_2=\Big(\frac{d^3+d^2-2d}{d-1}\Big)^{\frac{1}{4}}$. From the above theorem,
 \begin{align*}
    P(\ranko{x_1}{x_1}\otimes\ranko{y_1}{y_1})&=(I_d\otimes I_d)-\frac{1}{d+1}(\ranko{\Omega_d}{\Omega_d}+\Delta_d) \mbox{ and }\\
    P(\ranko{x_2}{x_2}\otimes\ranko{y_2}{y_2})&=(I_d\otimes I_d)+(\ranko{\Omega_d}{\Omega_d}+\Delta_d)   
 \end{align*}
 are separable. Now let $\lambda\in[\frac{-1}{d+1},1]$. Then $\lambda=t(\frac{-1}{d+1})+(1-t)$ for some $t\in [0,1]$, and hence
 \begin{align*}
     tP(\ranko{x_1}{x_1}\otimes\ranko{y_1}{y_1})+(1-t)P(\ranko{x_2}{x_2}\otimes\ranko{y_2}{y_2})
     =(I_d\otimes I_d)+\lambda(\ranko{\Omega}{\Omega_d}+\Delta_d)
 \end{align*}
 is separable.
\end{proof}

\bibliographystyle{alpha} 
 \newcommand{\etalchar}[1]{$^{#1}$}

\end{document}